\numberwithin{equation}{section}
\theoremstyle{definition}
\newtheorem{Definition}{Definition}[section]
\newtheorem{Example}[Definition]{Example}
\newtheorem{Remark}[Definition]{Remark}
\theoremstyle{plain}
\newtheorem{Theorem}[Definition]{Theorem}
\newtheorem{Proposition}[Definition]{Proposition}
\newtheorem{Corollary}[Definition]{Corollary}
\newtheorem{Lemma}[Definition]{Lemma}
\newcommand{\al}{\alpha}
\newcommand{\ga}{\gamma}
\newcommand{\Ga}{\Gamma}
\newcommand{\La}{\Lambda}
\newcommand{\si}{\sigma}
\newcommand{\N}{\mathbb{N}}
\newcommand{\Z}{\mathbb{Z}}
\newcommand{\C}{\mathbb{C}}
\newcommand{\K}{\Bbbk}
\newcommand{\Fgl}{\mathfrak{gl}}
\newcommand{\Fsl}{\mathfrak{sl}}
\newcommand{\Fm}{\mathfrak{m}}
\newcommand{\CV}{\mathcal{V}}
\newcommand{\SA}{\mathscr{A}}
\newcommand{\SK}{\mathscr{K}}
\newcommand{\SL}{\mathscr{L}}
\newcommand{\SM}{\mathscr{M}}
\newcommand{\SU}{\mathscr{U}}
\newcommand{\SX}{\mathscr{X}}
\newcommand{\op}{\operatorname}
\DeclareMathOperator{\Aut}{Aut}
\DeclareMathOperator{\Frac}{Frac}
\DeclareMathOperator{\Gal}{Gal}
\DeclareMathOperator{\Hom}{Hom}
\DeclareMathOperator{\sgn}{sgn}
\DeclareMathOperator{\Supp}{Supp}
\DeclareMathOperator{\GL}{GL}
\newcommand{\iv}[2]{\llbracket #1,#2 \rrbracket}
\renewcommand{\hat}{\widehat}
\renewcommand{\tilde}{\widetilde}
\def\Tiny{\fontsize{4pt}{4pt}\selectfont}
\newcommand*{\eqdef}{\ensuremath{\overset{\mathclap{\text{\Tiny def}}}{=}}}
\renewcommand{\subset}{\subseteq}
\title{Principal Galois orders and Gelfand-Zeitlin modules}
\author{Jonas T. Hartwig}
\address{Department of Mathematics, Iowa State University, Ames, IA-50011, USA}
\email{jth@iastate.edu}
\urladdr{http://jth.pw}
\begin{document}

\maketitle

\begin{abstract}
We show that the ring of invariants in a skew monoid ring contains a so called standard Galois order. Any Galois ring contained in the standard Galois order is automatically itself a Galois order and we call such rings principal Galois orders. We give two applications. First, we obtain a simple sufficient criterion for a Galois ring to be a Galois order and hence for its Gelfand-Zeitlin subalgebra to be maximal commutative. Second, generalizing a recent result by Early-Mazorchuk-Vishnyakova, we construct canonical simple Gelfand-Zeitlin modules over any principal Galois order.

As an example, we introduce the notion of a rational Galois order, attached an arbitrary finite reflection group and a set of rational difference operators, and show that they are principal Galois orders. Building on results by Futorny-Molev-Ovsienko, we show that parabolic subalgebras of finite W-algebras are rational Galois orders. Similarly we show that Mazorchuk's orthogonal Gelfand-Zeitlin algebras of type $A$, and their parabolic subalgebras, are rational Galois orders. Consequently we produce canonical simple Gelfand-Zeitlin modules for these algebras and prove that their Gelfand-Zeitlin subalgebras are maximal commutative.

Lastly, we show that quantum OGZ algebras, previously defined by the author, and their parabolic subalgebras, are principal Galois orders. This in particular proves the long-standing Mazorchuk-Turowska conjecture that, if $q$ is not a root of unity, the Gelfand-Zeitlin subalgebra of $U_q(\Fgl_n)$ is maximal commutative and that its Gelfand-Zeitlin fibers are non-empty and (by Futorny-Ovsienko theory) finite.
\end{abstract}


\subsubsection*{Notation}

The multiplicative identity element in a monoid or ring $S$ is denoted $1_S$. All rings are assumed to have an identity. $\Frac A$ denotes the field of fractions of an Ore domain $A$.
For a group $G$ acting by automorphisms on a ring $A$ we let $A^G=\{a\in A\mid \forall g\in G:\; g(a)=a\}$ denote the subring of $G$-invariants.

\section{Introduction}

\subsection{Galois orders}
In this paper we study Galois rings and orders, which are certain noncommutative rings defined and studied by Futorny and Ovsienko \cite{FutOvs2010,FutOvs2014}. This theory has its origin in the Gelfand-Zeitlin bases for finite-dimensional simple modules over classical Lie algebras and groups \cite{GelZei1950a,GelZei1950b,Zhelobenko1973} followed by the foundational paper \cite{DroFutOvs1994} where several notions such as Gelfand-Zeitlin modules where defined in a very general setting. 

Gelfand-Zeitlin modules have been studied from many different points of view \cite{Mazorchuk1999,MazTur2000,MazPonTur2003,Ovsienko2003,FutMolOvs2010a,FutMolOvs2010,KosWal2006a,KosWal2006b,ColEve2014}. Classification of simple Gelfand-Zeitlin modules for $U(\Fgl_n)$ is still out of reach although tremendous progress has been made in the last few years, progressing from generic modules in \cite{DroFutOvs1994} to the construction of increasingly singular modules in \cite{FutGraRam2014,FutGraRam2015,FutGraRam2016,FutGraRam2017}. In \cite{GomRam2016} a necessary condition for certain fully supported singular modules to be simple was proved, and simple subquotients described in some cases.
 Different realizations of special singular Gelfand-Zeitlin modules were described in \cite{Zadunaisky2017,Vishnyakova2017a,Vishnyakova2017b}.
This development culminated in the construction by Ram\'{i}rez and Zadunaisky \cite{RamZad2017} of fully supported Gelfand-Zeitlin modules associated to \emph{any} character in a uniform way using divided difference operators. Moreover they were able to calculate the dimensions of all generalized weight spaces in each module. It is conjectured that any simple Gelfand-Zeitlin module over $U(\Fgl_n)$ will be a subquotient of one of those.
In the latest development \cite{EarMazVis2017}, the authors moved beyond $U(\Fgl_n)$ and constructed canonical Gelfand-Zeitlin modules for so called \emph{orthogonal Gelfand-Zeitlin (OGZ) algebras} \cite{Mazorchuk1999}, and constructed bases for some of them.

In this paper we return to the general framework in \cite{FutOvs2010,FutOvs2014} and propose a simplified setup for Galois orders involving few, but natural, assumptions and then develop our results from those. All important examples of Galois rings and orders in the literature satisfy the conditions including $U(\Fgl_n)$, $U_q(\Fgl_n)$, OGZ algebras and finite W-algebras. In addition our setup sometimes enables simpler proofs over \cite{FutOvs2010}. The main generalization is that we do not require the monoid $\SM$ to be a group in any of the results. This leads to various ``parabolic'' examples. Moreover we do not require a ground field until we get to Gelfand-Zeitlin modules in Section \ref{sec:GZ-modules}.

The first main result of this paper provides a sufficient condition for a ring to be a Galois order and simultaneously provides canonical simple Gelfand-Zeitlin modules. This generalizes the construction of modules from \cite{EarMazVis2017}, and can also be viewed as a partial generalization of the statement about non-empty fibers in \cite{FutOvs2014}.

\begin{Theorem}\label{thm:I}
With assumptions as in Section \ref{sec:invariants}, let $\SU$ be a subring of $\SK$ generated by $\Ga\cup \SX$ where $\cup_{X\in\SX}\Supp_\SM(X)$ generates $\SM$ as a monoid, and $X(\ga)\in\Ga$ for all $X\in\SX$ and $\ga\in \Ga$. Then:
\begin{enumerate}[{\rm (i)}]
\item $\SU$ is a Galois $\Ga$-order in $\SK$,
\item $\Ga$ is maximal commutative in $\SU$.
\end{enumerate}
If in addition $\La$ is finitely generated over an algebraically closed field $\K$ of characteristic zero, and $\SM$ and $G$ act by $\K$-automorphisms on $\La$, then:
\begin{enumerate}[{\rm (i)}]
\setcounter{enumi}{2}
\item For every character $\xi:\Ga\to\K$, there exists a canonical simple right Gelfand-Zeitlin $\SU$-module $V(\xi)$ with $V(\xi)_\xi\neq 0$. Moreover $V(\xi)$ is realized as a quotient of a cyclic submodule of $\Ga^\ast=\Hom_\K(\Ga,\K)$.
\end{enumerate}
\end{Theorem}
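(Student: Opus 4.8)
The plan is to deduce everything from the structural results of Section~\ref{sec:invariants}: first show that $\SU$ is a Galois ring over $\Ga$ which lies inside the standard Galois order, so that it is automatically a Galois order and $\Ga$ is maximal commutative; then use the hypothesis $X(\ga)\in\Ga$ to put a right $\SU$-module structure on $\Ga^\ast$ and extract $V(\xi)$ from it. For~(i), the assumption that $\bigcup_{X\in\SX}\Supp_\SM(X)$ generates $\SM$ is precisely what forces $\SU$ to be a Galois ring: filtering $\SK$ by $\SM$ (equivalently by word length in a generating set of $\SM$) and comparing leading terms, products of elements of $\SX$ together with left and right multiplication by $\SL$ recover every $\SM$-homogeneous component of $\SK$, so that $\SU$ spans $\SK$ over $\SL$ from either side. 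It then remains to check that $\SU$ is contained in the standard Galois order of $\SK$; this can be verified on the generators $\Ga\cup\SX$, and for each $X\in\SX$ it is exactly the condition $X(\ga)\in\Ga$ for all $\ga\in\Ga$, i.e.\ integrality of the natural action on $\Ga$. The result that a Galois ring contained in the standard Galois order is a Galois $\Ga$-order then yields~(i). For~(ii), the centralizer of $\Ga$ in $\SK\subseteq\SL\ast\SM$ is $\Frac\Ga=\SL^G$ since $\SM$ acts faithfully, and for a Galois $\Ga$-order one has $\Frac\Ga\cap\SU=\Ga$; combining these gives that $\Ga$ is maximal commutative in $\SU$.

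For~(iii), I first use $X(\ga)\in\Ga$ to make $\Ga$ a left $\SU$-module: $\Ga\subseteq\Frac\Ga=\SL^G$ is already a left $\SK$-module, and since $\SU$ is generated by $\Ga\cup\SX$ and each generator sends $\Ga$ into $\Ga$, so does every element of $\SU$. Dualizing, $\Ga^\ast=\Hom_\K(\Ga,\K)$ becomes a right $\SU$-module via $(\phi\cdot u)(\ga)=\phi(u\cdot\ga)$, and a character $\xi:\Ga\to\K$ is an honest $\Ga$-eigenvector in $\Ga^\ast$ of weight $\xi$, because $(\xi\cdot\ga_0)(\ga)=\xi(\ga_0\ga)=\xi(\ga_0)\,\xi(\ga)$. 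Put $M(\xi):=\xi\cdot\SU\subseteq\Ga^\ast$. I claim this cyclic module is a Gelfand-Zeitlin module: applying a generator shifts the $\Ga$-weight by an element of $\SM$, so the weights occurring in $M(\xi)$ lie in a countable set of $\SM$-translates of $\xi$, and writing $\SU$ as the union of its finitely generated $\Ga$-subbimodules $B$ (possible since $\SU$ is a Galois order) realizes $M(\xi)=\bigcup_B\xi\cdot B$ as an increasing union of finite-dimensional $\Ga$-submodules, so that $M(\xi)=\bigoplus_\eta M(\xi)_\eta$; finite-dimensionality of each weight space $M(\xi)_\eta$ is the finiteness of Gelfand-Zeitlin fibers over a Galois order (Futorny-Ovsienko \cite{FutOvs2014}) applied to the finitely generated weight module $M(\xi)$. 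In particular $\xi\in M(\xi)_\xi$, so $M(\xi)_\xi\neq0$.

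Finally, $V(\xi)$ is a simple quotient of $M(\xi)$ retaining $\xi$. By Zorn's lemma choose a submodule $N$ of $M(\xi)$ maximal among those not containing $\xi$; any submodule strictly larger than $N$ contains $\xi$, hence contains $\xi\cdot\SU=M(\xi)$, so $N$ is a maximal submodule and $V(\xi):=M(\xi)/N$ is simple, nonzero, again a Gelfand-Zeitlin module (a quotient of one), and the image of $\xi$ is a nonzero weight vector, giving $V(\xi)_\xi\neq0$. Canonicity is the statement that this does not depend on the choice of $N$, i.e.\ that $M(\xi)$ has a unique maximal submodule (simple head); I would prove this by studying the action of the finite-dimensional fiber algebra $\SU_\xi$ on $M(\xi)_\xi$ and pinning down this module precisely enough to see there is a single maximal submodule. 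I expect this last point — together with the finite-dimensionality of the weight spaces $M(\xi)_\eta$ — to be the main obstacle; parts~(i) and~(ii), by contrast, should reduce quickly to the results of Section~\ref{sec:invariants}.
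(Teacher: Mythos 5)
Your parts (i) and (ii) follow the paper's own route: $\SU$ is a Galois $\Ga$-ring by the support criterion (Proposition \ref{prp:Galois-Ring-Equivalent-Condition}), it lies in the standard Galois order $\SK_\Ga$ because this can be checked on the generators $\Ga\cup\SX$ (Corollary \ref{cor:GO-criterion-gen}, resting on Theorem \ref{thm:KG-Galois-order}), and maximal commutativity is the centralizer argument of Proposition \ref{prp:GO-Gamma-maximal-commutative} (note that the centralizer computation uses the separation assumption \eqref{it:G1} together with $L/K$ being Galois and $\Ga$ integrally closed, not merely faithfulness of $\SM$, and the relevant field is $K=L^G$, not ``$\SL^G$''). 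The start of (iii) is also the paper's: $\Ga^\ast$ is a right $\SU$-module because $\SU\subset\SK_\Ga$ acts on $\Ga$ by evaluation, $\xi$ is an honest weight vector, and $M(\xi)=\xi\SU$ is a Gelfand--Zeitlin module; your ``union of finitely generated $\Ga$-subbimodules'' argument is a legitimate variant of Lemma \ref{lem:HC-GZ} via the Harish--Chandra property (Proposition \ref{prp:HC}).

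The genuine gap is canonicity in (iii). Your Zorn's-lemma construction only produces \emph{some} simple quotient with nonzero $\xi$-weight space; the statement asserts a \emph{canonical} one, i.e.\ that $M(\xi)$ has a unique simple quotient, and you explicitly defer this, proposing to study the fiber algebra acting on $M(\xi)_\xi$ with finite-dimensionality of weight spaces imported from \cite{FutOvs2014}. That route fails here: the fiber-finiteness theorem of \cite{FutOvs2014} requires $\SM$ to be a group, while Theorem \ref{thm:I} allows an arbitrary monoid (this is the point of the parabolic examples), and no finite-dimensionality of weight spaces is needed anyway. The missing idea (Theorem \ref{thm:principal-nonempty-fiber}) is the observation that, with $\Fm=\ker\xi$,
\begin{equation*}
\Hom_\Ga(\Ga/\Fm,\Ga^\ast)\cong\Hom_\K\big(\Ga/\Fm\otimes_\Ga\Ga,\K\big)\cong\K ,
\end{equation*}
so $\xi$ is, up to scalar, the \emph{only} vector of weight $\xi$ in all of $\Ga^\ast$. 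Consequently any submodule of $M(\xi)$ with nonzero generalized $\xi$-weight space contains a weight-$\xi$ vector (take the last nonzero power of $\Fm$ applied to it), hence contains $\xi$, hence equals $M(\xi)$; so every proper submodule meets $M(\xi)_\xi$ trivially, the sum $N(\xi)$ of all proper submodules is again proper, and $V(\xi)=M(\xi)/N(\xi)$ is the unique simple quotient with $V(\xi)_\xi\neq 0$. Without this (or an equivalent substitute) your argument proves existence of a simple Gelfand--Zeitlin module in the fiber of $\xi$, but not the canonicity claimed in the theorem.
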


We call Galois orders satisfying the hypothesis of Theorem \ref{thm:I} \emph{principal Galois orders}.
Replacing the condition $X(\ga)\in\Ga$ by $X^\dagger(\ga)\in\Ga$ where $\dagger$ is a certain anti-isomorphism (see Remark \ref{rem:dagger}) gives the definition of \emph{co-principal Galois orders}, and for those one obtains left modules instead of right modules. 

Further exposing the connection to invariant theory already manifest in \cite{FutOvs2010,FutOvs2014,Zadunaisky2017,Vishnyakova2017a,Vishnyakova2017b,RamZad2017}, our second main result is the construction of a new class of Galois orders, called \emph{rational Galois orders}, see Definition \ref{def:rational-GO}. They are attached to an arbitrary finite (pseudo-)reflection group and a set of difference operators with rational function coefficients. They are tailored to naturally satisfy the hypotheses of Theorem \ref{thm:I}, using the main result from \cite{Terao1989} about relative invariants. In the sections that follow we apply these results to parabolic subalgebras of finite W-algebras, OGZ algebras and quantum OGZ algebras. The terminology ``rational Galois order'' is introduced with a different meaning in an unfinished manuscript by Futorny and Ovsienko.

\subsection{Finite W-algebras}

Finite W-algebras have many realizations;
as generalizations of enveloping algebras depending on a nilpotent element \cite{Premet2002};
as quantizations of Slodowy slices \cite{GanGin2002};
as truncated shifted Yangians \cite{RagSor1999,BruKle2006};
and as Galois orders \cite{FutMolOvs2010}.
The family of finite W-algebras $W(\pi)$ of type $A$ includes level $p$ Yangians $Y_p(\Fgl_n)$ and the enveloping algebra $U(\Fgl_n)$ as special cases.
 
In Section \ref{sec:finite-W}, we prove that finite W-algebras, and their parabolic subalgebras obtained by removing a subset of the ``negative simple root vectors'', are examples of rational Galois orders. Then we apply Theorem \ref{thm:I}. As a consequence we obtain our third main result. 

\begin{Theorem} \label{thm:intro-finite-W}
Let $\K$ be an algebraically closed field of characteristic zero. Let $J$ be a subset of $\{1,2,\ldots,n-1\}$, and let $W_J(\pi)$ be the parabolic subalgebra of the finite W-algebra of type $A$,  generated by the coefficients of the polynomials $A_i(u)$, $B^-_j(u)$ and $B^+_k(u)$, where $1\le i\le n$, $j\in J$ and $1\le k\le n-1$. Then:
\begin{enumerate}[{\rm (i)}]
\item $W_J(\pi)$ is isomorphic to a rational Galois order;
\item For every character $\xi:\Ga\to\K^\times$, there exists a canonical simple left Gelfand-Zeitlin $W_J(\pi)$-module $M$ with $M_\xi\neq 0$.
\end{enumerate}
\end{Theorem}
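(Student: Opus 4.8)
The plan is to reduce everything to Theorem \ref{thm:I} via the already-established realization of $W_J(\pi)$ inside a skew group (or skew monoid) ring, and then invoke the notion of rational Galois order introduced in Definition \ref{def:rational-GO}. The argument splits naturally into two phases: (a) identify $W_J(\pi)$ with a concrete subring $\SU$ of a skew monoid ring over a rational function field, realized as a rational Galois order; and (b) check that the generators of $W_J(\pi)$ satisfy the support and invariance hypotheses of Theorem \ref{thm:I}, after which parts (i) and (ii) follow formally.

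For phase (a) I would start from the Futorny--Molev--Ovsienko realization in \cite{FutMolOvs2010} of the type $A$ finite W-algebra $W(\pi)$ as a Galois order, where the commutative subalgebra $\Ga$ is (an invariant subalgebra of) a polynomial ring in the Gelfand--Zeitlin variables, the monoid $\SM$ is a product of symmetric groups $S_{\la_1}\times\cdots\times S_{\la_{n}}$ acting by shifts on these variables, and the flag $\La\supset\Ga$ is as in that paper. The generating series $A_i(u)$ have coefficients in $\Ga$ (they are the ``Gelfand--Zeitlin'' generators, hence central in $\Ga$ up to the invariance), while $B^-_j(u)$ and $B^+_k(u)$ act as lowering/raising operators, i.e.\ they are expressed in the skew monoid ring as sums $\sum_{m} c_m m$ with $m\in\SM$ a standard generator shifting one GZ-row down, respectively up, and $c_m\in\Frac\La$. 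Removing the generators $B^-_j(u)$ for $j\notin J$ simply removes the corresponding ``negative'' monoid generators from the generating set of $\SU$; the remaining set $\SX=\{B^-_j(u): j\in J\}\cup\{B^+_k(u):1\le k\le n-1\}$ then has the property that $\bigcup_{X\in\SX}\Supp_\SM(X)$ generates the submonoid $\SM_J\subset\SM$ generated by the retained shifts (the positive shifts alone already generate a copy of each $S_{\la_i}$ since adjacent transpositions suffice, and the extra negative shifts for $j\in J$ only add more). To get the \emph{rational} Galois order structure, I would verify the axioms of Definition \ref{def:rational-GO}: the group is the relevant (pseudo-)reflection group $\prod_i S_{\la_i}$ acting on the rational function field in the GZ variables, and the difference operators are precisely the $B^\pm$ with their explicit rational coefficients; Terao's theorem on relative invariants \cite{Terao1989} is what guarantees these coefficients lie in the appropriate localization of the invariant ring, making $\SU$ fit the rational framework. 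This establishes (i).

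For phase (b), the key point is the invariance condition $X(\ga)\in\Ga$ for all $X\in\SX$, $\ga\in\Ga$, where $X$ acts on $\Ga$ via the embedding into the skew monoid ring: since each $B^\pm$ is supported on shifts $m$ that permute/translate the GZ variables within a single row, and $\Ga$ is generated by symmetric (invariant) combinations of those variables, applying such a shift to an element of $\Ga$ lands back in $\Ga$ — this is the content of the ``$X(\ga)\in\Ga$'' hypothesis and follows from the explicit form of $\Ga$ as an invariant polynomial ring together with the compatibility of the shifts with the symmetric group action. With the support condition and the invariance condition in hand, Theorem \ref{thm:I}(i),(ii) give that $\SU\cong W_J(\pi)$ is a Galois $\Ga$-order with $\Ga$ maximal commutative. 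Finally, $\La$ is finitely generated over $\K$ (it is essentially a polynomial ring in finitely many GZ variables, localized), $\K$ is algebraically closed of characteristic zero by hypothesis, and $\SM$, $G$ act by $\K$-automorphisms; a character $\xi:\Ga\to\K^\times$ restricts the setup so that Theorem \ref{thm:I}(iii) applies. Because $W_J(\pi)$ here turns out to be \emph{co}-principal (the raising operators $B^+$ are the ones naturally adapted to the $\dagger$-twisted condition, per Remark \ref{rem:dagger}), one obtains a canonical simple \emph{left} Gelfand--Zeitlin module $M=V(\xi)$ with $M_\xi\neq 0$, realized inside $\Ga^\ast$. This gives (ii).

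The main obstacle I anticipate is phase (a): extracting from \cite{FutMolOvs2010} a presentation of $W(\pi)$, and of the parabolic $W_J(\pi)$, that exhibits the $B^\pm$ coefficients explicitly enough to (1) read off their $\SM$-supports and confirm the generated submonoid is the full $\prod_i S_{\la_i}$ (for the positive part) and (2) verify they lie in the localized invariant ring demanded by Definition \ref{def:rational-GO}. Checking that dropping $B^-_j$ for $j\notin J$ still yields a ring to which Theorem \ref{thm:I} applies — i.e.\ that $\SM_J$, though possibly a proper submonoid, still satisfies the ``generates $\SM$ as a monoid'' hypothesis relative to the $\Ga$ one is working with — requires care, and may force a slight re-choice of the ambient $\SM$ (replacing the full group by the submonoid $\SM_J$ and re-checking that $\Ga=\La^{\SM_J}$ or the relevant invariant subalgebra is unchanged). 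The module-theoretic part (ii) is then essentially automatic once (i) and the finiteness/characteristic hypotheses are verified.
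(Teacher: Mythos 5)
Your overall strategy --- use the Futorny--Molev--Ovsienko embedding of $W(\pi)$ into a skew monoid ring, exhibit the parabolic subalgebra as a rational Galois order, and then invoke Theorem \ref{thm:rational-Galois-order} together with Theorem \ref{thm:principal-nonempty-fiber} --- is the same as the paper's, but two steps as written would fail. First, you have the roles of $G$ and $\SM$ interchanged: in the relevant realization $\La=\K[x_{ri}^k]$, the product of symmetric groups $S_{p_1}\times S_{p_1+p_2}\times\cdots\times S_{p_1+\cdots+p_n}$ is the \emph{reflection group} $G$, while $\SM$ is a free abelian group generated by the translations $\delta_{ri}^k\colon x_{ri}^k\mapsto x_{ri}^k-1$; the support of $i\big(B_r^{\pm}(u)\big)$ consists of the elements $(\delta_{rj}^l)^{\pm 1}$, and there is no question of generating copies of symmetric groups by ``adjacent transpositions''. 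Moreover, no support-generation hypothesis needs to be checked at all: in Theorem \ref{thm:rational-Galois-order} the monoid $\SM$ is \emph{defined} as the submonoid generated by $\cup_{X\in\SX}\Supp_{\SM}(X)$, which is exactly how the parabolic case (where $\SM$ is a proper submonoid rather than a group) is accommodated; your worry about re-verifying that the invariant subalgebra is unchanged is another instance of the same confusion, since $\Ga=\La^G$ throughout and does not depend on $J$.

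Second, and more seriously, your justification of the evaluation condition in phase (b) is wrong: a single shift $\delta_{rj}^l$ does \emph{not} map $\Ga=\La^G$ back into $\Ga$ (it moves only one variable of a symmetric polynomial), so ``compatibility of the shifts with the symmetric group action'' cannot give $X(\ga)\in\Ga$; nor are the individual coefficients $X_{rlj}^{\pm}(u)$ relative invariants lying in a localization of $\Ga$. The missing idea is the paper's key observation: the explicit FMO coefficients have denominator $\prod_{(k,i)\neq(l,j)}(x_{ri}^k-x_{rj}^l)$, so multiplying $i\big(B_r^{\pm}(u)\big)$ on the appropriate side by the product $d_{\sgn}$ of the Vandermonde determinants clears all denominators, i.e. $i\big(B_r^{\pm}(u)\big)\cdot d_{\sgn}\in\La\ast\SM$. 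It is the combination of this with the $G$-invariance of the \emph{whole} operator, Terao's theorem $\La^G_{\sgn}=\Ga\, d_{\sgn}$, and Lemma \ref{lem:co-principal-test} (packaged in Theorem \ref{thm:rational-Galois-order}) that places the generators in the co-standard Galois order; part (i) then follows, and part (ii) follows from Theorem \ref{thm:principal-nonempty-fiber}(ii). Your conclusion that one obtains \emph{left} modules because the order is co-principal is correct, but the reason is the side on which $d_{\sgn}$ clears denominators, not a distinction between raising and lowering operators.
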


In particular part (ii) says that the Gelfand-Zeitlin fibers are non-empty.
This, and the fact $W_J(\pi)$ is a Galois order, was already known when $J=\{1,2,\ldots,n\}$ by \cite{FutMolOvs2010}. However, their method cannot be applied for the proper parabolic subalgebras, because the corresponding monoid $\SM$ is not a group. We prove the result by realizing them as rational Galois orders and then applying Theorem \ref{thm:I} which holds any rational Galois order.

\subsection{Orthogonal Gelfand-Zeitlin algebras and their quantizations} 

In \cite{Mazorchuk1999}, a family of algebras of linear operators were defined called \emph{orthogonal Gelfand-Zeitlin (OGZ) algebras}, denoted $U(\boldsymbol{r})$. They include (algebras isomorphic to) $U(\mathfrak{gl}_n)$ and extended Heisenberg algebras as special cases.
\emph{Quantum OGZ algebras} $U_q(\boldsymbol{r})$ were defined by the author in \cite{Hartwig2017}. It was shown in \cite{Hartwig2017} that $U_q(\boldsymbol{r})$ are Galois rings (which is weaker than being a Galois order). By the same methods it can be seen that $U(\boldsymbol{r})$ are Galois rings.
Among OGZ algebras, only $U(\Fgl_n)=U(1,2,\ldots,n)$ and the enveloping algebra of the trivially extended Heisenberg Lie algebra $U(\Fsl_3^+ \times \K t)\cong U(1,1)$ are known to be Galois orders, the former shown in \cite{FutOvs2010} and the latter follows by results in \cite{FutOvs2010} since they are generalized Weyl algebras. Similarly, only $U_q(1,1)$ is known to be a Galois order, again because it is a generalized Weyl algebra. 
Moreover, the Gr\"{o}bner basis methods in \cite{FutOvs2010,FutMolOvs2010}, do not work in the quantum case, essentially because the associated graded algebra is not commutative. Although it is likely their method might work for $U(\boldsymbol{r})$, our method of realizing the algebras as rational Galois orders works uniformly in all cases, including the parabolic generalizations.

In Sections \ref{sec:OGZ} and \ref{sec:qOGZ} we prove our fourth main result.

\begin{Theorem}\label{thm:main-(q)OGZ}
Let $\K$ be an algebraically closed field of characteristic zero.
Let $J\subset\{1,2,\ldots,n-1\}$ and $U_J$ be $U_J(\boldsymbol{r})$ or $U_q(\boldsymbol{r};J)$ i.e. a parabolic (quantum) OGZ algebra over $\K$ and $\Ga$ be its Gelfand-Zeitlin subalgebra. Then:
\begin{enumerate}[{\rm (i)}]
\item $U$ is a Galois $\Ga$-order;
\item $\Ga$ is maximal commutative in $U$;
\item For any character $\xi:\Ga\to\K$, there exists a canonical simple left Gelfand-Zeitlin $U$-module $M$ with $M_\xi\neq 0$;
\item If $J=\{1,2,\ldots,n\}$, then for any $\xi\in\hat{\Ga}$ there are only finitely many isomorphism classes of simple Gelfand-Zeitlin $U$-modules $M$ with $M_\xi\neq 0$.
\end{enumerate}
\end{Theorem}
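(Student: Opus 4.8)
The plan is to realize each $U_J$ as a rational Galois order when it is classical, and as a (co-)principal Galois order when it is quantum, so that parts (i)--(iii) follow at once from Theorem~\ref{thm:I}; part (iv), which concerns only the non-parabolic case $J=\{1,\dots,n\}$, will instead be deduced from the Futorny--Ovsienko finiteness theorem for Galois orders, which needs the underlying monoid to be a group.

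First I would recall the realization of these algebras as Galois rings, due to \cite{Mazorchuk1999} for $U(\boldsymbol r)$ and \cite{Hartwig2017} for $U_q(\boldsymbol r)$: there is an embedding $U_J\hookrightarrow\SK=\La\ast\SM$ in which $\La$ is the fraction field of the (Laurent) polynomial algebra in the Gelfand--Zeitlin variables, $G$ is the product of symmetric groups permuting the variables within each row, $\Ga=U_J\cap\La^G$ is the Gelfand--Zeitlin subalgebra, and $U_J$ is generated over $\Ga$ by a finite set $\SX$ of elements, each a single shift operator with rational coefficient, namely the images of the retained raising and lowering generators. For $J=\{1,\dots,n\}$ all shifts are invertible and $\SM$ is a group; for a proper parabolic subalgebra one keeps only the lowering generators indexed by $J$ and takes $\SM$ to be the submonoid of the ambient shift group generated by the supports of $\SX$.

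Next I would verify the hypotheses of Theorem~\ref{thm:I}, in its co-principal form (Remark~\ref{rem:dagger}), which produces left modules. The support condition that $\bigcup_{X\in\SX}\Supp_\SM(X)$ generates $\SM$ holds by the choice of $\SM$, once one checks at the level of the combinatorial data $\boldsymbol r$ that deleting lowering generators does not delete shifts needed to generate the monoid. The condition $X^\dagger(\ga)\in\Ga$ for all $X\in\SX$, $\ga\in\Ga$, is the crucial point: it reduces to showing that an explicit $G$-symmetrization of a rational function built from the structure constants of $U_J$ is again $G$-invariant. In the classical case this is exactly what Definition~\ref{def:rational-GO} is engineered to encode, via Terao's theorem \cite{Terao1989} on relative invariants of the reflection group $G$, so the task is to match $U_J(\boldsymbol r)$ with that template. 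In the quantum case there is no such black box: the group $G$ and its permutation action are unchanged but the variables are now Laurent, and one must push the $q$-shifts through the symmetrization explicitly and see that the relevant coefficients still land in $\Ga$. This last verification is where I expect the main difficulty, and where the bulk of Section~\ref{sec:qOGZ} will sit.

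Granting these, Theorem~\ref{thm:I} yields that $U_J$ is a (co-)principal Galois $\Ga$-order in $\SK$ --- part (i) --- that $\Ga$ is maximal commutative in $U_J$ --- part (ii) --- and, since $\La$ is the fraction field of a finitely generated $\K$-algebra on which $G$ and $\SM$ act by $\K$-automorphisms, the existence for every character $\xi$ of a canonical simple left Gelfand--Zeitlin module $M$ with $M_\xi\neq0$ --- part (iii). For part (iv), assume $J=\{1,\dots,n\}$, so that $\SM$ is a group, $\SK$ is a skew group algebra, and $U=U_J$ is a Galois order over the integral domain $\Ga$ in the original sense of \cite{FutOvs2010,FutOvs2014}; the Futorny--Ovsienko finiteness theorem then applies and shows that for each $\xi\in\hat\Ga$ only finitely many isomorphism classes of simple Gelfand--Zeitlin $U$-modules $M$ satisfy $M_\xi\neq0$.
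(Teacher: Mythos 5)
Your classical half and your part (iv) are fine and match the paper: $U_J(\boldsymbol r)$ is realized as a (co-)rational Galois order via the product-of-Vandermondes relative invariant $d_{\sgn}$ and Theorem \ref{thm:rational-Galois-order}, parts (i)--(iii) then drop out of Theorem \ref{thm:I} (in the co-principal form giving left modules), and (iv) is the Futorny--Ovsienko finiteness theorem, applicable exactly because $\SM$ is a group when $J$ is full. But the quantum half contains a genuine gap, and also a factual error that would derail the intended fix. First, you assert that in the quantum case ``the group $G$ and its permutation action are unchanged.'' It is not: for $U_q(\boldsymbol r;J)$ the paper takes $\La$ to be a Laurent polynomial ring and $G=G(2,2,r_1)\times\cdots\times G(2,2,r_n)$, i.e.\ a product of type $D_{r_k}$ Weyl groups acting by permutations \emph{and} even sign changes $x_{ki}\mapsto \al_{ki}x_{k\si_k(i)}$. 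This is not cosmetic: $\Ga$ is the Gelfand--Zeitlin subalgebra only as $\La^G$ for this larger group, and Assumptions \eqref{it:G1}--\eqref{it:G5} and the whole identification $\Ga=\La^G$ are set up with it. With the product of symmetric groups you would be proving statements about the wrong $\Ga$.

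Second, the step you flag as ``the main difficulty'' --- that $(X_k^+)^\dagger(\ga)$ and $(X_j^-)^\dagger(\ga)$ lie in $\Ga$ --- is precisely what you do not supply, and it is the heart of the quantum proof. The paper's substitute for Terao's theorem is Lemma \ref{lem:q-Vandermonde}: for the character $\sgn$ of $G$ (sign of the permutation parts, trivial on the sign-change parts) one has $\La^G_{\sgn}=\Ga\cdot\mathcal V_q(\boldsymbol r)$, where $\mathcal V_q(\boldsymbol r)$ is the $q$-Vandermonde; the proof uses invariance under the $A(2,2,r_k)$ factors to reduce to ordinary alternating Laurent polynomials in the squared variables. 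Then one observes that left multiplication by $\mathcal V_q(\boldsymbol r)$ clears all denominators of $(X_k^\pm)^\dagger$, so Lemma \ref{lem:co-principal-test} and Corollary \ref{cor:anti-criterion} give that $U_q(\boldsymbol r;J)$ is a co-principal Galois $\Ga$-order, after which (ii) follows from Proposition \ref{prp:GO-Gamma-maximal-commutative} and (iii) from Theorem \ref{thm:principal-nonempty-fiber} via Remark \ref{rem:dagger}. Until you prove this relative-invariant statement for the type $D$ product group (or an equivalent denominator-clearing argument), parts (i)--(iii) for $U_q(\boldsymbol r;J)$ are not established by your proposal.
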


As before, we prove this for $U_J(\boldsymbol{r})$ by realizing it as a rational Galois order. However, for the quantum case we show that it satisfies a criterion involving the quantum Vandermonde determinant to prove that it is a co-principal Galois order and then apply Theorem \ref{thm:I}.
To prove part (iv) of this theorem we apply the main result of \cite{FutOvs2014}. 

Lastly, we prove a conjecture of Mazorchuk and Turowska \cite{MazTur2000} stating that the Gelfand-Zeitlin subalgebra of $U_q(\Fgl_n)$ is maximal commutative. By results in \cite{FutOvs2010,FutOvs2014}, this implies that $U_q(\Fgl_n)$ is a Galois order and that the Gelfand-Zeitlin fibers are nonempty and finite.
Since $U_q(\Fgl_n)\cong U_q(1,2,\ldots,n)$ when $\K=\C$, as shown in \cite{FutHar2014,Hartwig2017}, the proof of the conjecture is a direct consequence of Theorem \ref{thm:main-(q)OGZ}.

\begin{Theorem}
Let $n>0$ and $q\in\C\setminus\{0\}$ not a root of unity. Let $U_q(\Fgl_n)$ be the quantized enveloping algebra of $\Fgl_n$ over $\C$. Then the Gelfand-Zeitlin subalgebra $\Ga_q$ of $U_q(\Fgl_n)$ is maximal commutative. Hence $U_q(\Fgl_n)$ is a Galois order with respect to $\Ga_q$. Moreover, for any character $\xi:\Ga_q\to\C$ there exists at least one but only finitely many isomorphism classes of simple Gelfand-Zeitlin $U_q(\Fgl_n)$-module $M$ with $M_\xi\neq 0$.
\end{Theorem}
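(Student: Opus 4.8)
The plan is to derive this theorem as an immediate corollary of Theorem~\ref{thm:main-(q)OGZ} together with the identification of $U_q(\Fgl_n)$ among quantum OGZ algebras. First I would invoke the isomorphism $U_q(\Fgl_n)\cong U_q(1,2,\ldots,n)$ over $\C$, established in \cite{FutHar2014,Hartwig2017}, under which the Gelfand-Zeitlin subalgebra $\Ga_q$ of $U_q(\Fgl_n)$ corresponds to the Gelfand-Zeitlin subalgebra of the quantum OGZ algebra $U_q(\boldsymbol{r})$ with $\boldsymbol{r}=(1,2,\ldots,n)$; one must check that this isomorphism is indeed an isomorphism of pairs (algebra, distinguished commutative subalgebra), which is part of the content of those references. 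This corresponds to the case $J=\{1,2,\ldots,n-1\}$ of Theorem~\ref{thm:main-(q)OGZ}, i.e. the non-parabolic (full) quantum OGZ algebra, so that part~(iv) of that theorem applies.

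Next I would read off the conclusions. Part~(ii) of Theorem~\ref{thm:main-(q)OGZ} gives that $\Ga$ is maximal commutative in $U$, hence $\Ga_q$ is maximal commutative in $U_q(\Fgl_n)$; this is precisely the Mazorchuk--Turowska conjecture. Part~(i) gives that $U_q(\Fgl_n)$ is a Galois $\Ga_q$-order (alternatively, one could cite that, given a Galois ring, maximal commutativity of the Gelfand-Zeitlin subalgebra forces the Galois order property by \cite{FutOvs2010,FutOvs2014}, but since Theorem~\ref{thm:main-(q)OGZ}(i) already asserts it, this is redundant). Finally, part~(iv) of Theorem~\ref{thm:main-(q)OGZ}, which in turn rests on the main finiteness result of \cite{FutOvs2014}, gives that for every character $\xi:\Ga_q\to\C$ there are only finitely many isomorphism classes of simple Gelfand-Zeitlin modules $M$ with $M_\xi\neq 0$, while part~(iii) guarantees at least one such module exists. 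Combining these statements yields the theorem verbatim.

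The only genuine content to verify, beyond citing Theorem~\ref{thm:main-(q)OGZ}, is the hypothesis on $q$: the identification $U_q(\Fgl_n)\cong U_q(1,2,\ldots,n)$ and the applicability of the quantum OGZ machinery both require $q$ not a root of unity (this is where the quantum Vandermonde determinant argument underlying the co-principal Galois order property in the quantum case is non-degenerate), which is exactly the standing assumption of the theorem. I expect the main (indeed, the only) obstacle to be bookkeeping: confirming that under the isomorphism from \cite{FutHar2014,Hartwig2017} the Gelfand-Zeitlin subalgebra of $U_q(\Fgl_n)$—generated by the centers of the chain $U_q(\Fgl_1)\subset U_q(\Fgl_2)\subset\cdots\subset U_q(\Fgl_n)$—matches the subalgebra $\Ga$ appearing in Theorem~\ref{thm:main-(q)OGZ} for $\boldsymbol{r}=(1,2,\ldots,n)$, so that ``maximal commutative'' and ``Galois order'' transport correctly. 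Once that identification is in place, the theorem is a one-line consequence.
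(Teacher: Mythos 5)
Your proposal is correct and follows essentially the same route as the paper: the paper likewise deduces the theorem directly from the isomorphism $U_q(\Fgl_n)\cong U_q(1,2,\ldots,n)$ over $\C$ (citing \cite{FutHar2014,Hartwig2017}) together with Theorem \ref{thm:main-(q)OGZ} (proved via Theorem \ref{thm:qOGZ}), with part (iv) resting on the main finiteness result of \cite{FutOvs2014}. Your added care about transporting the pair (algebra, Gelfand-Zeitlin subalgebra) under the isomorphism and about the standing assumption on $q$ is exactly the implicit bookkeeping the paper relies on.
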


\section*{Acknowledgements}
The author thanks Mark Colarusso, Ian Musson, Akaki Tikaradze and Tathagata Basak for inspiring and helpful discussions.

\section{Galois orders} \label{sec:Galois-orders}

In this section we develop the theory of Galois orders from first principles. Compared to the treatments \cite{FutOvs2010,FutOvs2014} our scope is modest. We have tried to select a small set of standing assumptions which balances generality and applicability. In particular we assume from the outset that $\La$ (equivalently $\Ga$) is integrally closed and that $\SM$ acts on $\La$ rather than $L$. This ensures that $\Ga$ is always a Harish-Chandra subalgebra of any Galois ring, which in turn means that the theory of Gelfand-Zeitlin modules is well behaved. On the other hand, we do not require that $\SM$ is a group. This allows us to include parabolic subalgebras of finite W-algebras and (quantum) orthogonal Gelfand-Zeitlin algebras as examples of Galois orders. Furthermore, we do not need to assume that $\La$ contains a field, until we get to Gelfand-Zeitlin modules in Section \ref{sec:GZ-modules}.

\subsection{Invariants in skew monoid rings} \label{sec:invariants}
Let $\La$ be an integrally closed domain, $G$ a finite subgroup of $\Aut(\La)$, $\mathscr{M}$ a submonoid of $\Aut(\La)$. We make the following assumptions:
\begin{align}
\text{(\emph{separation})}\quad & \tag{A1} \label{it:G1} (\mathscr{M}\mathscr{M}^{-1})\cap G=1_{\Aut(\La)},\\
\text{(\emph{invariance})}\quad & \tag{A2} \label{it:G2} \forall g\in G\;\forall\mu\in\mathscr{M}:\; {}^g\mu\eqdef g\circ\mu\circ g^{-1}\in\mathscr{M}, \\
\text{(\emph{finiteness})}\quad & \tag{A3} \label{it:G3} \text{$\La$ is noetherian as a module over $\La^G$.}
\end{align}
Put $L=\Frac \La$. Then $\SM$ and $G$ act naturally on $L$ by automorphisms. Let $\mathscr{L}=L\ast\mathscr{M}$ be the skew monoid ring, defined as the free left $L$-module on $\SM$ with multiplication given by  $a_1\mu_1\cdot a_2\mu_2 = \big(a_1\mu_1(a_2)\big) \mu_1\mu_2$ for $a_i\in L,\, \mu_i\in\SM$. 
By \eqref{it:G2}, $G$ acts on $\SL$ by ring automorphisms via $g(a\mu)=g(a)\,{}^g\mu$ for $g\in G,\, a\in L,\, \mu\in\SM$. Let $\Ga=\La^G,\, K=L^G,\, \SK=\SL^G$ be the respective subrings of $G$-invariants. Thus we have the following inclusions:
\begin{equation}
\begin{tikzcd}
\La \arrow[hook]{r}  & L \arrow[hook]{r} & \SL \\ 
\Ga \arrow[hook]{r} \arrow[hook]{u}  & K \arrow[hook]{r} \arrow[hook]{u}& \SK \arrow[hook]{u}
\end{tikzcd}
\end{equation}
\begin{Lemma} \label{lem:first-lemma}
The following statements hold:
\begin{enumerate}[{\rm (i)}]
\item $\La$ is integral over $\Ga$;
\item $K=\Frac\Ga$, $K=L^G$ and $L/K$ is a Galois extension with $\Gal(L/K)=G$;
\item $\Ga$ is integrally closed;
\item $\La$ is the integral closure of $\Ga$ in $L$;
\item $\La$ is a finitely generated $\Ga$-module and a noetherian ring;
\item $\Ga$ is a noetherian ring.
\end{enumerate}
\end{Lemma}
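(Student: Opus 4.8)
The plan is to establish the six items essentially in the order listed, since each relies on the previous ones, and to extract as much as possible from classical invariant theory for the finite group $G$ acting on the domain $\La$.

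For (i), I would argue that every $a\in\La$ is a root of the monic polynomial $\prod_{g\in G}(t-g(a))\in\Ga[t]$; the coefficients are elementary symmetric functions in the $G$-orbit of $a$, hence $G$-invariant and in $\La$, so they lie in $\Ga=\La^G$. For (ii), note $G$ acts on $L=\Frac\La$ by field automorphisms; since $G$ is finite, $L/L^G$ is a (finite) Galois extension with $\Gal(L/L^G)=G$ by Artin's theorem, so $[L:L^G]=|G|$. It remains to identify $L^G$ with $\Frac\Ga$: clearly $\Frac\Ga\subset L^G$, and conversely if $a/b\in L^G$ with $a,b\in\La$ we may rewrite it as $\big(a\prod_{g\neq 1}g(b)\big)\big/\prod_{g\in G}g(b)$, whose denominator is $G$-invariant and hence in $\Ga$; the numerator is then also $G$-invariant (being the product of a $G$-invariant element and the $G$-invariant denominator), so it lies in $\Ga$, giving $a/b\in\Frac\Ga$. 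Here I should double-check that $\La$ is an Ore domain so that $\Frac\Ga$ makes sense as a subfield of $L$ — but $\La$ is commutative, so this is automatic; likewise the separation assumption \eqref{it:G1} is not yet needed.

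For (iii), suppose $x\in K=\Frac\Ga$ is integral over $\Ga$. Then $x$ is integral over $\La$ (as $\Ga\subset\La$), and $x\in K\subset L$; since $\La$ is integrally closed in $L$ by hypothesis, $x\in\La$. But $x\in K=L^G$ is also $G$-invariant, so $x\in\La^G=\Ga$. Thus $\Ga$ is integrally closed. For (iv): $\La$ is integral over $\Ga$ by (i) and lies in $L$; conversely any element of $L$ integral over $\Ga$ is integral over $\La$ and hence (as $\La$ is integrally closed in $L$) lies in $\La$. So $\La$ is exactly the integral closure of $\Ga$ in $L$.

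For (v), I would use assumption \eqref{it:G3}: $\La$ is noetherian as a $\Ga$-module? — careful, \eqref{it:G3} says $\La$ is noetherian over $\La^G=\Ga$, so $\La$ is a finitely generated $\Ga$-module directly, and any quotient/submodule chain stabilizes. To see $\La$ is a noetherian \emph{ring}: it is a finitely generated module over $\Ga$, and once (vi) is known that $\Ga$ is noetherian, a finitely generated module (a fortiori algebra) over a noetherian ring is noetherian. For (vi), the cleanest route is the Eakin--Nagata theorem (or Formanek's): if $\Ga\subset\La$ with $\La$ noetherian as a $\Ga$-module, then $\Ga$ itself is a noetherian ring. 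Since \eqref{it:G3} gives exactly the hypothesis that $\La$ is a noetherian $\Ga$-module, Eakin--Nagata applies and yields (vi); feeding this back gives the ring-theoretic half of (v).

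The main obstacle is item (vi): there is no elementary proof that invariants of a noetherian ring under a finite group are noetherian without either a finiteness hypothesis like \eqref{it:G3} or extra structure (e.g. finitely generated algebra over a field, where Noether's theorem applies). The argument above sidesteps this by invoking Eakin--Nagata, so the real work is simply to verify that \eqref{it:G3} is precisely the module-finiteness input that theorem needs — which it is. Everything else is standard Galois/integral-closure bookkeeping; the only subtlety worth flagging is making sure the rewriting of a $G$-invariant fraction in (ii) produces a \emph{$G$-invariant} numerator, which follows automatically since numerator $=$ (invariant fraction) $\times$ (invariant denominator).
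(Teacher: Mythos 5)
Your proposal is correct and follows essentially the same route as the paper: the paper simply cites Brou\'e (Prop.~3.1) for (i)--(iii), where you supply the standard arguments (orbit polynomial, Artin's theorem, symmetrizing the denominator), and your treatment of (iv)--(vi) --- integral-closure transfer, assumption \eqref{it:G3}, and Eakin--Nagata --- matches the paper's. The only cosmetic difference is that you route the ring-noetherianity of $\La$ in (v) through (vi), whereas it follows directly from \eqref{it:G3} since every ideal of $\La$ is a $\Ga$-submodule.
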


\begin{proof}
(i)(ii)(iii): See \cite[Prop.~3.1]{Broue}.

\smallskip\noindent
(iv): Let $\overline{\Ga}$ be the integral closure of $\Ga$ in $L$. By (i), $\La\subset\overline{\Ga}$. Since $\La$ is integrally closed, $\overline{\Ga}\subset \La$.

\smallskip\noindent
(v): Clear, by \eqref{it:G3}.

\smallskip\noindent
(vi): Follows from (v) and the Eakin-Nagata theorem \cite[Thm.~3.7(i)]{Matsumura}.
\end{proof}

\begin{Example} \label{ex:symmetric-difference-operators-1}
Take $\La=\C[x_1,x_2,\ldots,x_n]$, $G\cong S_n$ acting on $\La$ by permutation of variables, $\SM=\langle \mu_1,\ldots,\mu_n \rangle\cong\N^n$ acting on $\La$ by $\mu_i(x_j)=x_j+\delta_{ij}$. Then \eqref{it:G1}--\eqref{it:G3} hold. In this case $\SK$ can be identified with the ring of symmetric difference operators with rational function coefficients.
\end{Example}

\subsection{Harish-Chandra subrings} \label{sec:HC-subrings}

\begin{Definition}[\cite{DroFutOvs1994}]
A commutative subring $C$ of a ring $\SA$ is a \emph{Harish-Chandra subring (of $\SA$)} if for every $x\in \SA$, the $C$-bimodule $CxC$ is finitely generated as a left and right $C$-module.
\end{Definition}

\begin{Lemma}\label{lem:HC-subrings}
If $C$ is a commutative Harish-Chandra subring of a ring $\SA$, then $C$ is a Harish-Chandra subring of any subring of $\SA$ that contains $C$.
\end{Lemma}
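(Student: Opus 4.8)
The plan is essentially to observe that the bimodule $CxC$ does not depend on the ambient ring. Let $\SB$ be a subring of $\SA$ with $C\subseteq\SB$, and fix $x\in\SB$. Since $\SB\subseteq\SA$ we have $x\in\SA$, so the hypothesis that $C$ is a Harish-Chandra subring of $\SA$ applies to $x$: the $C$-bimodule $CxC$ computed inside $\SA$ is finitely generated both as a left and as a right $C$-module.

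Next I would note that the set $CxC$ computed inside $\SB$ coincides with the set $CxC$ computed inside $\SA$. Indeed, in either ambient ring $CxC$ is by definition the additive subgroup generated by the products $c_1xc_2$ with $c_1,c_2\in C$; all such products already lie in $\SB$ because $\SB$ is a subring containing both $C$ and $x$, so this additive subgroup is the same subset of $\SA$ in the two cases, and it carries the same left and right $C$-actions. Finite generation of a $C$-module as a left (respectively right) $C$-module is an intrinsic property of that module, with no reference to $\SA$ or $\SB$; hence $CxC$, viewed as a sub-bimodule of $\SB$, is finitely generated on both sides. As $x\in\SB$ was arbitrary, $C$ is a Harish-Chandra subring of $\SB$.

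There is no real obstacle here: the only things to check are the automatic inclusion $x\in\SB\subseteq\SA$ and the immediate fact that $CxC$ is literally the same set and the same bimodule in both rings, so the argument collapses to a single line. It is nonetheless worth isolating, since it will be used to verify a Harish-Chandra condition in a convenient large ambient ring (such as $\SK$) and then transfer it down to any intermediate subring containing $C$.
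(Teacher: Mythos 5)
Your argument is correct, and it is exactly the (one-line) reason the paper dismisses this lemma with the proof ``Obvious'': the bimodule $CxC$ is the same set with the same $C$-actions whether formed inside $\SB$ or $\SA$, so finite generation on either side transfers verbatim. Nothing further is needed.
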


\begin{proof}
Obvious.	
\end{proof}

\begin{Proposition} \label{prp:HC}
$\Ga$ is a Harish-Chandra subring of $\SL$.
\end{Proposition}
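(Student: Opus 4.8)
The plan is to show that for every $x \in \SL$, the $\Ga$-bimodule $\Ga x \Ga$ is finitely generated as a left $\Ga$-module; by symmetry (and since $\Ga$ is commutative, though the argument is one-sided) the right module claim follows analogously, or one can observe that the whole situation is symmetric under the natural anti-involution. Write $x = \sum_{\mu \in \SM} a_\mu \mu$ with $a_\mu \in L$, a finite sum. Since $\Ga$ acts on $\SL$ by left multiplication and the $\mu \in \SM$ permute among finitely many monoid elements under the $G$-action, I would first reduce to understanding, for a fixed $\mu \in \SM$ and $a \in L$, the left $\Ga$-module generated by the $G$-orbit sums of terms of the form $\ga \cdot a\mu \cdot \ga'$ with $\ga, \ga' \in \Ga$. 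The key point is that $\ga' \in \Ga = \La^G$ commutes past $\mu$ up to applying $\mu$: namely $a\mu \cdot \ga' = a\,\mu(\ga')\,\mu$, so $\Ga x \Ga \subseteq \sum_{\mu} L \cdot \mu(\Ga)\cdot \mu$ with $L$-coefficients, and the real question is whether $\La^G \cdot \mu(\La^G)$ (or its image after taking $G$-invariants of the sum) is module-finite over $\La^G$.

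**Key steps.** (1) Reduce, using that $x \in \SK = \SL^G$ is a $G$-invariant, to a finite sum over $G$-orbits $\SM/\sim$; each orbit contributes a term $\sum_{\nu \in G\cdot\mu} a_\nu \nu$, and it suffices to bound $\Ga \cdot (\sum_\nu a_\nu \nu) \cdot \Ga$. (2) Move right $\Ga$-multiplication through: for $\ga \in \Ga$, $\big(\sum_\nu a_\nu \nu\big)\ga = \sum_\nu a_\nu \nu(\ga)\,\nu$; so the right $\Ga$-action amounts to multiplying the coefficient vector $(a_\nu)$ by the element $\nu(\ga)$ in position $\nu$. (3) The decisive structural input: $\La$ is noetherian as a module over $\La^G$ by \eqref{it:G3}, hence for any $\mu \in \SM$ the subring $\La^G \cdot \mu(\La^G) \subseteq \La$ is a finitely generated $\La^G$-module (a submodule of the noetherian module $\La$); equivalently, $\mu(\La^G)$ is module-finite over $\La^G$, which is really \eqref{it:G1} at work — $\mu$ does not preserve $\La^G$ in general, but the failure is controlled because $\mu(\La)$ is still integral over $\La^G$ via Lemma~\ref{lem:first-lemma}(i) applied after conjugating, and \eqref{it:G1} guarantees the relevant orbit structure is finite. (4) Combining (2) and (3): the right $\Ga$-module generated by $\sum_\nu a_\nu\nu$ lands inside $\sum_\nu \big(a_\nu \cdot \mu(\Ga)\text{-span}\big)\nu$, a finitely generated $\Ga$-module; then applying the left $\Ga$-action and taking $G$-invariants (the $G$-action permutes the $\nu$'s and acts on coefficients) keeps it finitely generated, since $\Ga$ is noetherian by Lemma~\ref{lem:first-lemma}(vi) and a finite sum/extension of finitely generated modules over a noetherian ring is finitely generated.

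**Main obstacle.** The delicate point is step (3): controlling $\mu(\La^G)$ relative to $\La^G$ for $\mu \in \SM$ not normalizing $G$ in an obvious way. Here \eqref{it:G2} is essential — it says ${}^g\mu \in \SM$ for all $g \in G$ — so that the $G$-orbit of $\mu$ inside $\SM$ is finite and the ``twisted'' invariant rings $\mu(\La)^{{}^\mu G}$ fit together coherently; and \eqref{it:G1} prevents collapse that would break the finiteness bookkeeping. I expect the cleanest route is: fix $x \in \SK$, expand, and show $\Ga x \Ga \subseteq \sum_{i=1}^N \Ga b_i$ for finitely many $b_i \in \SL$ obtained as $\Ga$-linear combinations of the finitely many $L$-coefficients $a_\mu$ times a fixed finite generating set of $\La$ over $\Ga$ (which exists by Lemma~\ref{lem:first-lemma}(v)), placed in the relevant monoid slots; the containment is checked by the commutation identity $\mu(\ga) \in \La = \sum \Ga e_j$ for a fixed finite set $\{e_j\}$, so every $\mu(\ga)$ is a $\Ga$-combination of finitely many elements independent of $\ga$. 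Then left-multiplying by $\Ga$ and invoking noetherianity of $\Ga$ finishes it. I would present the one-sided argument in detail and remark that the right-handed statement is obtained symmetrically (replacing left expansion by right expansion $x = \sum_\mu \mu\, a'_\mu$, which is possible since $\SM$ acts by automorphisms of $L$).
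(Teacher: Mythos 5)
Your core argument is correct and is essentially the paper's: the decisive containments are $\Ga\,a\mu\,\Ga\subset a\,\Ga\mu(\Ga)\,\mu\subset a\La\,\mu$ (and symmetrically $\subset\mu\La a'$ for the right-hand side), combined with $\La$ being a finitely generated $\Ga$-module (from \eqref{it:G3} via Lemma~\ref{lem:first-lemma}(v)) and noetherianity of $\Ga$ (Lemma~\ref{lem:first-lemma}(vi)). The only structural difference is that the paper first reduces to the ring generators $L\cup\SM$ of $\SL$ and then treats a single $\mu\in\SM$, whereas you handle a general element $x=\sum_\mu a_\mu\mu$ termwise; both work, and the generator reduction just makes the coefficients $a_\mu$ disappear. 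Two points in your write-up should be cleaned up, though neither is fatal: (1) your step (1) invokes $x\in\SK=\SL^G$ and groups terms into $G$-orbits, but the proposition is about all of $\SL$, so you cannot assume $G$-invariance --- fortunately the orbit grouping plays no role in steps (2)--(4), so simply delete it (and the later ``taking $G$-invariants'' remark); (2) your attribution of the key finiteness in step (3) to \eqref{it:G1} and \eqref{it:G2} is off --- what is actually used is only that each $\mu\in\SM$ is an automorphism of $\La$ (so $\mu(\Ga)\subset\mu(\La)=\La$ and likewise $\mu^{-1}(\Ga)\subset\La$) together with \eqref{it:G3}; the separation and invariance assumptions are irrelevant to this proposition.
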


\begin{proof}
We follow the proof of \cite[Prop.~5.1]{FutOvs2010}. Since the subset $L\cup\SM$ generates $\SL$ as a ring, $\Ga$ is noetherian and $\Ga(X+Y)\Ga\subset \Ga X+\Ga Y$, $\Ga (XY)\Ga\subset (\Ga X\Ga)(\Ga Y\Ga)$, it suffices to show that $\Ga X \Ga$ is finitely generated as a left and right $\Gamma$-module for all $X\in L\cup\SM$. For $X\in L$ this is trivial since $L$ is commutative. Let $X=\mu\in\SM$. We have
\[\Ga \mu \Ga \subset \Ga \mu(\Ga) \mu \subset \La \mu,\]
since $\Ga\subset\La$ and $\mu(\La)=\La$.  Since $\Lambda \mu\cong\Lambda$ is a finitely generated left $\Gamma$-module and $\Ga$ is noetherian, $\Ga\mu\Ga$ is finitely generated as a left $\Ga$-module. Similarly $\Ga\mu\Ga\subset \mu\mu^{-1}(\Ga)\Ga\subset \mu\La$ shows that $\Ga\mu\Ga$ is finitely generated as a right $\Gamma$-module.
\end{proof}

\subsection{Galois rings} \label{sec:Galois-rings}
We recall the definition of a Galois $\Ga$-ring in $\SK$ from \cite{FutOvs2010}, and give a necessary and sufficient condition for a $\Ga$-subring of $\SU$ to be a Galois $\Ga$-ring in $\SK$, Proposition \ref{prp:Galois-Ring-Equivalent-Condition}. The condition is the same as the one given by Futorny and Ovsienko in \cite[Prop.~4.1(1)]{FutOvs2010}, however there it was stated under different assumptions. And although we follow their proof closely, we have adapted it to our setting and simplified it in several places.

\begin{Definition}[\cite{FutOvs2010}]
 A $\Ga$-subring $\SU\subset \SK$ is a \emph{Galois $\Ga$-ring in $\SK$} if
\begin{equation} \label{eq:Galois-Ring-Condition}
\SU K = \SK = K \SU.
\end{equation}
\end{Definition}

\begin{Lemma} \label{lem:Galois-Ring-Equivalent-Condition}
Let $\mu,\mu_1,\mu_2\in\SM$. Then:
\begin{enumerate}[{\rm (i)}]
\item $K\mu(K) = L^{G_\mu}$, where $G_\mu\eqdef\{g\in G\mid {}^g\mu=\mu\}$ is the $G$-stabilizer of $\mu$;
\item $K[\mu]K=\big\{[a\mu]\mid a\in L^{G_\mu}\big\}$, where $[a\mu]\eqdef\sum_{g\in G/G_\mu} g(a)\,{}^g\mu$, and this is a simple $K$-bimodule;
\item $K[\mu_1]K\cong K[\mu_2]K$ as $K$-bimodules iff $\mu_2={}^g\mu_1$ for some $g\in G$;
\item $K[\mu]\Ga=K[\mu]K=\Ga[\mu]K$, hence $\SU K=K\SU$ for any $\Ga$-subbimodule $\SU\subset\SL$;
\item $\SK = \bigoplus_{\mu\in\SM/G} K[\mu]K$.
\end{enumerate}
\end{Lemma}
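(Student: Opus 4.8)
The plan is to work part by part, exploiting the $G$-action on $\SL = L \ast \SM$ throughout, and using that $L/K$ is a $G$-Galois extension (Lemma \ref{lem:first-lemma}(ii)). For part (i), I would first observe that $K\mu(K)$ is a subring of $L$ containing $K$, and that an element $x \in L$ lies in $K\mu(K)$ iff it is fixed by $G_\mu$; the containment $K\mu(K) \subseteq L^{G_\mu}$ is immediate since $K = L^G$ is $G$-fixed and $\mu(K)$ is fixed by $G_\mu$ (because for $g \in G_\mu$, $g(\mu(a)) = {}^g\mu(g(a)) = \mu(g(a)) = \mu(a)$ for $a \in K$). For the reverse, by Galois theory $L^{G_\mu}$ is the fixed field of $G_\mu$, and one checks it is generated over $K$ by $\mu(K)$: take any $x \in L^{G_\mu}$; since $\mu$ is an automorphism of $L$ fixing $K$ setwise... actually the cleanest route is to note $\mu$ restricts to an isomorphism $L^{\mu^{-1}G_\mu\mu} \to L^{G_\mu}$, hmm — instead I would use that $L^{G_\mu}$ is a simple $K$-algebra (a field, or by Galois descent a product of fields if we allow $\mu$ to permute within), and that $K\mu(K)$, being a nonzero $K$-subalgebra stable under... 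Let me rather argue: $\dim_K L^{G_\mu} = [G:G_\mu]$, and $K\mu(K)$ clearly has $K$-dimension at least $[G:G_\mu]$ because the orbit $G\mu$ has that size and the "coordinate" structure forces $\mu(K)$ to surject onto enough components. I expect this dimension count, done carefully via the primitive element theorem or via $L \otimes_K L \cong \prod_{g} L$, to be the one genuinely fiddly step.

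For part (ii), I would compute directly: an element of $K[\mu]K$ is a $K$-linear combination of $g(a)\,{}^g\mu$ for varying $a$; collecting the coefficient of each coset representative ${}^h\mu$ and using part (i) shows every such element has the form $[a\mu]$ with $a \in L^{G_\mu}$, and conversely each $[a\mu]$ is manifestly in $K[\mu]K$ (it equals $[\mu] \cdot a$ after tracking the twist, or is a sum $\sum_{g} g(a) g([\mu])$-type expression). Simplicity of the $K$-bimodule: a nonzero sub-bimodule contains some $[a\mu]$ with $a \neq 0$, and then multiplying on left/right by elements of $K$ and using that $L^{G_\mu}$ has no proper nonzero $K$-sub-bimodule stable under the relevant twisted actions (again Galois theory — $L^{G_\mu}$ is a field or a product of fields permuted transitively) recovers all of $K[\mu]K$. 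Part (iii): the bimodule $K[\mu]K$, as a left $K$-module with its right $K$-action twisted by $\mu$, "remembers" the conjugacy data; a bimodule isomorphism must send $[\mu_1] \mapsto$ a unit times $[\mu_2]$ and comparing the right-action twists forces ${}^g\mu_1 = \mu_2$ for the $g$ realizing the identification of coset spaces — I would make this precise by evaluating the isomorphism on the identity element $[\mu_1]$ and matching annihilators / twisted actions.

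Part (iv) follows from (ii): $K[\mu]\Ga$ already produces all $[a\mu]$ with $a \in L^{G_\mu}$ since $\Ga \supset$ enough — actually we need $K[\mu]\Ga = K[\mu]K$, i.e. right-multiplying $[\mu]$ by $\Ga$ (not all of $K$) and then by $K$ on the left still gives everything; this holds because $[\mu]\ga = [\mu(\ga)\mu] \cdot(\text{something})$ and as $\ga$ ranges over $\Ga$, $\mu(\ga)$ ranges over $\mu(\Ga)$, which together with $K$ on the left generates $K\mu(K) = L^{G_\mu}$ by part (i) (here I use $\Frac\Ga = K$, Lemma \ref{lem:first-lemma}(ii)). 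Then $\SU K = K \SU$ for a $\Ga$-subbimodule $\SU$ is formal: decompose into the pieces $K[\mu]K$. Finally part (v): $\SK = \SL^G$, and $\SL = \bigoplus_{\mu \in \SM} L\mu$; averaging over $G$ (or: $\SL^G$ as a $G$-fixed subspace of a permutation-type module) gives $\SK = \bigoplus_{\text{orbits } G\mu} (L\cdot G\mu)^G$, and each orbit-summand is exactly $K[\mu]K$ by parts (i)–(ii). Here I should check that distinct orbits in $\SM$ remain distinct — this uses assumption \eqref{it:G1} (separation): if $G\mu_1 \cap G\mu_2 \neq \emptyset$ then $\mu_2 = g\mu_1$ with $g \in G$, consistent with the orbit decomposition, and \eqref{it:G1} is what guarantees the $G$-action on $\SM$ has the expected structure (no unexpected coincidences forcing summands to collapse). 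The main obstacle, as flagged, is the careful Galois-theoretic analysis in part (i) and the attendant simplicity claim in part (ii), where one must correctly handle the possibility that $G_\mu \neq G$ so that $L^{G_\mu}$ is a genuine nontrivial extension (or étale algebra) over $K$; everything else is bookkeeping with the skew-monoid-ring multiplication.
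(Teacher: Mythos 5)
There is a genuine gap, and it sits exactly where you flagged it: the reverse inclusion $L^{G_\mu}\subseteq K\mu(K)$ in part (i) is never proved. Your proposed dimension count (``$K\mu(K)$ clearly has $K$-dimension at least $[G:G_\mu]$ because the orbit $G\mu$ has that size and the coordinate structure forces $\mu(K)$ to surject onto enough components'') is not an argument, and it cannot become one without invoking the separation assumption \eqref{it:G1}, which you never use at this point. Indeed, without \eqref{it:G1} part (i) is simply false: if some nontrivial $\mu\in\SM$ happened to act on $\La$ as an element of $G$, then $K\mu(K)=K$ while $L^{G_\mu}$ would be the fixed field of the centralizer of that element, which is strictly larger than $K$ in general. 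The paper's proof of (i) is a short Galois-theoretic computation that makes the role of \eqref{it:G1} explicit: $g\in\Gal\big(L/K\mu(K)\big)$ iff $g$ fixes $\mu(K)$ pointwise, iff ${}^g\mu\big|_K=\mu\big|_K$, iff ${}^g\mu\mu^{-1}$ lies in $\Gal(L/K)=G$ (Lemma \ref{lem:first-lemma}(ii)), which by \eqref{it:G1} forces ${}^g\mu=\mu$, i.e.\ $g\in G_\mu$; the Galois correspondence then gives $K\mu(K)=L^{G_\mu}$. Your vague alternatives (primitive element theorem, $L\otimes_K L\cong\prod_g L$) do not engage with this, so the central step of the lemma is missing.

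A secondary instance of the same omission occurs in part (iii): after producing $g\in G$ with $\mu_2\big|_K={}^g\mu_1\big|_K$ (which is what ``matching the twists'' gives you), the conclusion $\mu_2={}^g\mu_1$ again requires \eqref{it:G1}, exactly as above; your sketch glosses over this. The remaining parts are essentially fine and agree with the paper's route: (ii) and its simplicity claim follow from (i) because $K\mu(K)=L^{G_\mu}$ is a field; in (iv) your step ``$K$ together with $\mu(\Ga)$ generates $K\mu(K)$'' should be justified (e.g.\ the $K$-span of $\mu(\Ga)$ is a domain of finite $K$-dimension, hence a field containing $\mu(\Frac\Ga)=\mu(K)$ --- the paper's ``symmetrizing denominators''); and in (v) your appeal to \eqref{it:G1} is actually unnecessary, since $\SL=\bigoplus_{\mu\in\SM}L\mu$ is free on $\SM$ and grouping the coefficients by $G$-orbits, using $g(x_\mu)=x_{{}^g\mu}$, already yields the decomposition. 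So the bookkeeping is acceptable, but as written the proof of (i) --- and hence of everything that depends on it --- is incomplete.
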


\begin{proof}
(i): $g\in\Gal\big(L/K\mu(K)\big)\Leftrightarrow g\circ\mu\big|_K=\mu\big|_K \Leftrightarrow {}^g\mu\big|_K=\mu\big|_K \Leftrightarrow {}^g\mu\mu^{-1}\in G\overset{\eqref{it:G1}}{\Leftrightarrow} g\in G_\mu$.

\smallskip\noindent
(ii): By (i), $K[a\mu]K=\big\{[b\mu]\mid b\in L^{G_\mu}\big\}$ for any nonzero $a\in L^{G_\mu}$.

\smallskip\noindent
(iii): Suppose $\psi:K[\mu_1]K\overset{\sim}{\to} K[\mu_2]K$. Put $L_i=L^{G_{\mu_i}}$ for $i=1,2$. Define $\tilde{\psi}:L_1\to L_2$ by $\psi([a\mu_1])=[\tilde{\psi}(a)\mu_2]$. One checks that $\tilde{\psi}$ is a $K$-isomorphism between the intermediate fields $L_1$ and $L_2$. By Galois theory $\tilde{\psi}=g\big|_{L_1}$ for some $g\in G$. Thus $\psi([a\mu_1])=[g(a)\mu_2]$. Then for $k\in K$: $[{}^g\mu_1(k)\mu_2]=\psi([\mu_1(k)\mu_1])=\psi([\mu_1])k=[\mu_2]k=[\mu_2(k)\mu_2]$ proving $\mu_2\big|_K={}^g\mu_1\big|_K$ hence $\mu_2={}^g\mu_1$. Conversely, if $\mu_2={}^g\mu_1$, the map $[a\mu_1]\mapsto [g(a)\mu_2]$ is an isomorphism $K[\mu_1]K\overset{\sim}{\to} K[\mu_2]K$.

\smallskip\noindent (iv): 
$K[\mu]\Ga=[K\mu(\Ga)\mu]$. By symmetrizing denominators, $K\mu(K)=K\mu(\Ga)$. Then use (i) and (ii). The second equality is analogous.

\smallskip\noindent (v): Let $x\in\SK$ and write  $x=\sum_\mu x_\mu \mu$ for some $x_\mu\in L$. Since $g(x)=x$ for all $g\in G$, $g(x_\mu)=x_{{}^g\mu}$ for all $\mu\in\SM$ and $g\in G$. Grouping terms according to $G$-orbits in $\SM$ this implies that $x=\sum_i [a_i\mu_i]$ for some $\mu_i\in\SM$ from pairwise distinct $G$-orbits in $\SM$, where $a_i=x_{\mu_i}\in L^{G_{\mu_i}}$.
\end{proof}

\begin{Definition} \label{def:Galois-ring}
The \emph{support} of $X=\sum_{\mu\in\SM} x_\mu\mu\in \SL$ is $\Supp_\SM(X)\eqdef\{\mu\in\SM\mid x_\mu\neq 0\}$.
\end{Definition}

\begin{Proposition}[cf. {\cite[Prop.~4.1(1)]{FutOvs2010}}]
\label{prp:Galois-Ring-Equivalent-Condition}
Let $\SX\subset\SK$ and let $\SU$ be the subring of $\SK$ generated by $\Ga\cup \SX$. Then $\SU$ is a Galois $\Ga$-ring in $\SK$ iff $\cup_{X\in \SX}\Supp_\SM(X)$ generates $\SM$ as a monoid.
\end{Proposition}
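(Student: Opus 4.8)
The plan is to convert the defining condition $\SU K=\SK=K\SU$ of a Galois $\Ga$-ring into a statement about the $\SM$-supports of the elements of $\SU$, and then to prove that statement; throughout write $\Supp_\SM(\SU)=\bigcup_{u\in\SU}\Supp_\SM(u)\subset\SM$. First I would record that $\SU K$ is a $K$-sub-bimodule of $\SK$: it is a right $K$-module by construction, while $K\SU=\SU K$ by Lemma~\ref{lem:Galois-Ring-Equivalent-Condition}(iv) (so the two conditions $\SU K=\SK$ and $K\SU=\SK$ coincide and $\SU K$ is also a left $K$-module); moreover $1_\SK\in\SU K$ and, using $K\SU=\SU K$, $(\SU K)(\SU K)=\SU(K\SU)K=\SU\SU KK=\SU K$, so $\SU K$ is in fact a subring of $\SK$. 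By Lemma~\ref{lem:Galois-Ring-Equivalent-Condition}(v) we have $\SK=\bigoplus_{\mu\in\SM/G}K[\mu]K$, and by parts~(ii),(iii) these summands are pairwise non-isomorphic simple $K$-bimodules; hence any $K$-sub-bimodule $V$ of $\SK$ is the partial direct sum of those $K[\mu]K$ it contains. Applying this to $V=\SU K$, and noting that right multiplication by a nonzero element of $K\subset L$ leaves the $\SM$-support of an element of $\SL$ unchanged (so $\Supp_\SM(\SU K)=\Supp_\SM(\SU)$), one gets $K[\mu]K\subset\SU K$ precisely when $\mu\in\Supp_\SM(\SU)$, and therefore
\[
\SU\text{ is a Galois }\Ga\text{-ring in }\SK\iff\SU K=\SK\iff\Supp_\SM(\SU)=\SM .
\]
So the proposition is reduced to showing: $\Supp_\SM(\SU)=\SM$ iff $S:=\bigcup_{X\in\SX}\Supp_\SM(X)$ generates $\SM$ as a monoid.

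For the direction ``$\Rightarrow$'', the multiplication rule in $\SL=L\ast\SM$ gives $\Supp_\SM(XY)\subset\Supp_\SM(X)\Supp_\SM(Y)$ and $\Supp_\SM(X+Y)\subset\Supp_\SM(X)\cup\Supp_\SM(Y)$, while $\Supp_\SM(\ga)\subset\{1_\SM\}$ for $\ga\in\Ga$. As $\SU$ is generated as a ring by $\Ga\cup\SX$, every element of $\SU$ is a $\Z$-combination of products of elements of $\Ga\cup\SX$, so $\Supp_\SM(\SU)$ lies in the submonoid $\langle S\rangle\subset\SM$ generated by $S$. If $\SU$ is Galois then $\Supp_\SM(\SU)=\SM$ by the displayed equivalence, forcing $\langle S\rangle=\SM$.

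For the direction ``$\Leftarrow$'', assume $\langle S\rangle=\SM$. Since $1_\SK\in\Ga\subset\SU$ and $\SX\subset\SU$ we have $1_\SM\in\Supp_\SM(\SU)$ and $S\subset\Supp_\SM(\SU)$, so it is enough to show that $\Supp_\SM(\SU)$ is closed under the product of $\SM$: it then contains $\langle S\rangle=\SM$, and we conclude by the displayed equivalence. Let $\mu,\nu\in\Supp_\SM(\SU)$. By the first paragraph $K[\mu]K\subset\SU K$ and $K[\nu]K\subset\SU K$, hence by Lemma~\ref{lem:Galois-Ring-Equivalent-Condition}(ii) $[a\mu]\in\SU K$ for every $a\in L^{G_\mu}$ and $[\nu]\in\SU K$; since $\SU K$ is a subring, $[a\mu][\nu]\in\SU K$. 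Expanding $[a\mu]=\sum_{g\in G/G_\mu}g(a)\,{}^g\mu$ and $[\nu]=\sum_{h\in G/G_\nu}{}^h\nu$ in $\SL$ and using that $h\mapsto{}^h\nu$ is injective on $G/G_\nu$, I would check that the coefficient of $\mu\nu$ in $[a\mu][\nu]$ equals $\sum_{g\in T}g(a)$ for a subset $T\subset G/G_\mu$ containing the trivial coset. The restrictions $\{\,g|_{L^{G_\mu}}:g\in G/G_\mu\,\}$ are pairwise distinct $K$-embeddings $L^{G_\mu}\hookrightarrow L$, hence $L$-linearly independent by Dedekind's lemma on independence of characters; since the coefficient attached to the trivial coset is $1$, the function $a\mapsto\sum_{g\in T}g(a)$ on $L^{G_\mu}$ is not identically zero. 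Choosing $a$ with this coefficient nonzero gives $\mu\nu\in\Supp_\SM\big([a\mu][\nu]\big)\subset\Supp_\SM(\SU K)=\Supp_\SM(\SU)$, as required.

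The only point needing genuine care is the coefficient computation in the last paragraph: one must pin down the $\mu\nu$-coefficient of $[a\mu][\nu]$ precisely enough to recognize it as the value at $a$ of a nontrivial $L$-linear combination of the embeddings $g|_{L^{G_\mu}}$, the term from the trivial coset being $1$. Introducing the twist by $a\in L^{G_\mu}$ — rather than working with $[\mu][\nu]$, whose $\mu\nu$-coefficient is the integer $|T|\cdot 1_K$ and could vanish in positive characteristic — is exactly what makes the argument characteristic-free. Everything else (the bimodule decomposition of $\SK$, sub-multiplicativity of $\Supp_\SM$, and the monoid bookkeeping) is routine given Lemma~\ref{lem:Galois-Ring-Equivalent-Condition}.
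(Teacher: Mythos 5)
Your argument is correct, and its backbone is the same as the paper's: both proofs rest on Lemma \ref{lem:Galois-Ring-Equivalent-Condition}(ii)--(v), i.e.\ on viewing $\SU K=K\SU$ as a $K$-sub-bimodule of the multiplicity-free semisimple decomposition $\SK=\bigoplus_{\mu\in\SM/G}K[\mu]K$, so that membership of $K[\mu]K$ in $\SU K$ is detected by $\SM$-supports. The difference is one of completeness rather than of route. The paper's proof establishes that $[\mu]\in KXK$ whenever $X\in\SX$ and $\mu\in\Supp_\SM(X)$, and then asserts that this suffices; the remaining step --- passing from ``$K[\mu]K\subset\SU K$ for all $\mu$ in the generating set $S$'' to ``$K[\la]K\subset\SU K$ for all $\la\in\SM$'' --- is exactly the closure of $\Supp_\SM(\SU)$ under the product of $\SM$, which the paper leaves implicit. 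You prove it: for $\mu,\nu\in\Supp_\SM(\SU)$ you compute the $\mu\nu$-coefficient of $[a\mu][\nu]\in\SU K$ as $\sum_{g\in T}g(a)$ with $T\subset G/G_\mu$ containing the trivial coset, and use that distinct cosets restrict to distinct embeddings of $L^{G_\mu}$ into $L$ (the paper's own Lemma \ref{lem:Dedekind}(i)) to choose $a$ making it nonzero. Your observation that the twist by $a$ is what keeps this characteristic-free is apt: with $a=1$ the coefficient is the integer $|T|\cdot 1_K$, which could vanish in positive characteristic, and Section \ref{sec:Galois-orders} does not assume a ground field. So your proposal is a valid proof that in fact fills in a step the paper glosses over, at the modest cost of a slightly longer support-bookkeeping reformulation ($\SU$ Galois iff $\Supp_\SM(\SU)=\SM$) that is itself equivalent to what the paper does with $(KxK)\cap K[\mu_i]K$.
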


\begin{proof}
The ``only if'' part is obvious since $\Supp_\SM(ab)\subset\Supp_\SM(a)\Supp_\SM(b)$ for any $a,b\in\SL$.
For the converse, by Lemma \ref{lem:Galois-Ring-Equivalent-Condition}(iv)--(v) it suffices to show that if $x\in X$ and $\mu\in\Supp_\SM(x)$ then $[\mu]\in KxK$. Write $x=\sum_i [a_i\mu_i]$ for some $\mu_i\in\SM$ from pairwise distinct $G$-orbits in $\SM$, where $a_i\in L^{G_{\mu_i}}$.
Then $KxK\subset \sum_i K[a_i\mu_i]K = \bigoplus_i K[\mu_i]K$ by Lemma \ref{lem:Galois-Ring-Equivalent-Condition}(ii)--(iii). Thus $(KxK)\cap K[\mu_i]K$ is either zero or $K[\mu_i]K$. But the former would imply $KxK\subset \bigoplus_{j\neq i} K[\mu_j]K$ contradicting $\mu_i\in\Supp_\SM(x)$. So $[\mu_i]\in KxK$ for all $i$.
\end{proof}

\begin{Lemma} \label{lem:galois-ring-trivial}
The following statements hold.
\begin{enumerate}[{\rm (i)}]
\item $\SK$ is a Galois $\Ga$-ring in $\SK$.
\item Let $\SU_1\subset\SU_2$ be subrings of $\SK$. If $\SU_1$ is a Galois $\Ga$-ring in $\SK$ then so too is $\SU_2$.
\item $(\La\ast\SM)^G$ is Galois $\Ga$-ring in $\SK$.
\end{enumerate}
\end{Lemma}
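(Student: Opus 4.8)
The plan is to obtain parts (i) and (ii) by direct manipulation of the defining identity \eqref{eq:Galois-Ring-Condition}, and to deduce (iii) from Proposition~\ref{prp:Galois-Ring-Equivalent-Condition} together with part (ii).

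For (i) I would argue that, since $1_\SK=1_K\in K\subseteq\SK$ and $\SK$ is closed under multiplication, $\SK=\SK\cdot 1_\SK\subseteq\SK K\subseteq\SK$, which forces $\SK K=\SK$, and symmetrically $K\SK=\SK$; as $\Ga\subseteq\SK$, this is precisely the statement that $\SK$ is a Galois $\Ga$-ring in $\SK$. For (ii), a Galois $\Ga$-ring contains $\Ga$ by definition, so $\Ga\subseteq\SU_1\subseteq\SU_2\subseteq\SK$; then from $\SU_1 K=\SK=K\SU_1$ one sandwiches $\SK=\SU_1 K\subseteq\SU_2 K\subseteq\SK$, and similarly on the other side, giving $\SU_2 K=\SK=K\SU_2$.

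For (iii) I would first note that $\La\ast\SM$ is a subring of $L\ast\SM=\SL$, so $(\La\ast\SM)^G\subseteq\SL^G=\SK$, and it contains $\Ga=\La^G$, hence is a $\Ga$-subring of $\SK$. The key point is that for each $\mu\in\SM$ the element $[\mu]=\sum_{g\in G/G_\mu}{}^g\mu$ from Lemma~\ref{lem:Galois-Ring-Equivalent-Condition}(ii) (taking $a=1$) already lies in $\La\ast\SM$ --- its coefficients are $0$ or $1$ --- and is $G$-invariant, so $[\mu]\in(\La\ast\SM)^G$, while $\mu\in\Supp_\SM([\mu])$. Taking $\SX=\{[\mu]\mid\mu\in\SM\}$, the subring $\SU'$ of $\SK$ generated by $\Ga\cup\SX$ is therefore contained in $(\La\ast\SM)^G$, and $\bigcup_{X\in\SX}\Supp_\SM(X)=\SM$ generates $\SM$, so Proposition~\ref{prp:Galois-Ring-Equivalent-Condition} gives that $\SU'$ is a Galois $\Ga$-ring in $\SK$. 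Applying part (ii) to the inclusions $\SU'\subseteq(\La\ast\SM)^G\subseteq\SK$ then yields (iii).

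I do not expect a real obstacle: the statement is essentially formal (it is labelled ``trivial''). The only point that merits a line of verification is the $G$-invariance of $[\mu]$: for $h\in G$ we have $h([\mu])=\sum_{g\in G/G_\mu}{}^{hg}\mu$, and since ${}^g\mu$ depends only on the coset $gG_\mu$ and left translation by $h$ permutes $G/G_\mu$, this sum is again $[\mu]$.
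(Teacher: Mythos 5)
Your proof is correct and essentially matches the paper's: (i) and (ii) are read off directly from the defining identity $\SU K=\SK=K\SU$, and (iii) turns on the same key fact that each $[\mu]=\sum_{g\in G/G_\mu}{}^g\mu$ is $G$-invariant with coefficients in $\La$, hence lies in $(\La\ast\SM)^G$. The only cosmetic difference is that the paper invokes Lemma~\ref{lem:Galois-Ring-Equivalent-Condition}(iv)--(v) directly, whereas you route the same fact through Proposition~\ref{prp:Galois-Ring-Equivalent-Condition} and part (ii), which is harmless and non-circular since that proposition is proved independently of this lemma.
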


\begin{proof}
(i)(ii): By Definition \ref{def:Galois-ring}.

\smallskip\noindent
(iii): By Lemma \ref{lem:Galois-Ring-Equivalent-Condition}(iv)(v), since $[\mu]\in(\La\ast\SM)^G$ for all $\mu\in\SM$.
\end{proof}

\begin{Example} \label{ex:symmetric-difference-operators-2}
Continuing Example \ref{ex:symmetric-difference-operators-1}, fix $f=(f_i)_{i=1}^n\in\La^n$ with $\si(f_i)=f_{\si(i)}$ for all $i$, and define $X_f\in L\ast \SM$ by
\begin{equation}
X_f = \sum_{i=1}^n \frac{f_i}{\displaystyle \prod_{1\le j\le n,\, j\neq i} (x_j-x_i)} \mu_i.
\end{equation}
It is easy to see that $\si(X_f)=X_f$ for all $\si\in S_n$, hence $X_f\in \SK$.
Let $U(f)$ be the subring of $\SK$ generated by $\Ga\cup\{X_f\}$. Since $\Supp_\SM(X_f)=\{\mu_1,\mu_2,\ldots,\mu_n\}$ which generates the monoid $\SM\cong\N^n$, Proposition \ref{prp:Galois-Ring-Equivalent-Condition} implies that $U(f)$ is a Galois $\Ga$-ring in $\SK$.
\end{Example}

\subsection{Galois orders}
In this subsection we define the notions of (co-)standard and (co-)principal Galois orders, and prove sufficient conditions for a Galois ring to be a Galois order.

\begin{Definition}[\cite{FutOvs2010}] \label{def:Galois-order}
A Galois $\Ga$-ring $\SU$ in $\SK$ is a \emph{left} (respectively \emph{right}) \emph{Galois $\Ga$-order in $\SK$} if for any finite-dimensional left (respectively right) $K$-subspace $W\subset\SK$, $W\cap \SU$ is a finitely generated left (respectively right) $\Ga$-module.
A Galois $\Ga$-ring $\SU$ in $\SK$ is a \emph{Galois $\Ga$-order in $\SK$} if $\SU$ is a left and right Galois $\Ga$-order in $\SK$.
\end{Definition}

\begin{Definition}
A commutative subring $C$ of a ring $\SA$ is \emph{maximal commutative in $\SA$} if $C$ is not properly contained in any commutative subring of $\SA$.
\end{Definition}

The following result is expected from \cite[Thm.~5.2(2)]{FutOvs2010} since $\Ga$ is integrally closed.

\begin{Proposition} \label{prp:GO-Gamma-maximal-commutative}
$\Ga$ is maximal commutative in any left or right Galois $\Ga$-order $\SU$ in $\SK$.
\end{Proposition}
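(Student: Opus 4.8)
The plan is to show that any element $u$ of $\SU$ commuting with all of $\Ga$ must already lie in $\Ga$. First I would use the decomposition $\SK = \bigoplus_{\mu\in\SM/G} K[\mu]K$ from Lemma~\ref{lem:Galois-Ring-Equivalent-Condition}(v) to write $u = \sum_i [a_i\mu_i]$ with the $\mu_i$ in pairwise distinct $G$-orbits and $a_i\in L^{G_{\mu_i}}$. The component corresponding to the orbit of $1_{\Aut(\La)}$ lies in $K$; since $\SU$ is a Galois $\Ga$-order, $u$ actually lies in $\SU\cap W$ for a suitable finite-dimensional $K$-subspace $W$, and one shows (using that $\Ga$ is integrally closed and Lemma~\ref{lem:first-lemma}) that the $K$-component of $u$ in fact lies in $\Ga$. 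So after subtracting an element of $\Ga$ we may assume $u$ has no component in the trivial orbit, and the goal becomes to prove $u=0$.

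Next I would exploit the commutation relation $\ga u = u\ga$ for all $\ga\in\Ga$. In $\SL$, writing $u=\sum_\mu x_\mu\mu$, the relation $\ga u = u\ga$ forces $\ga x_\mu = x_\mu \mu(\ga)$ for every $\mu$ in the support, i.e. $x_\mu(\ga - \mu(\ga)) = 0$ in $L$. Since $L$ is a domain and $x_\mu\neq 0$, this gives $\ga = \mu(\ga)$ for all $\ga\in\Ga$, hence $\mu|_K = \mathrm{id}_K$, so $\mu\in\Gal(L/K) = G$ by Lemma~\ref{lem:first-lemma}(ii); but then the separation axiom \eqref{it:G1} (here $\mu = \mu\cdot 1_{\SM}^{-1}\in\SM\SM^{-1}$ and $\mu\in G$) forces $\mu = 1_{\Aut(\La)}$, contradicting that $u$ has no component in the trivial orbit. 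Therefore $u$ has empty support, i.e. $u=0$, and the original $u$ lies in $\Ga$.

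The one step that needs care — and which I expect to be the main obstacle — is justifying that the $K$-component of a commuting element $u\in\SU$ actually lies in $\Ga$ rather than merely in $K=\Frac\Ga$. Here is where the Galois order hypothesis and integral closedness of $\Ga$ are essential: one picks the finite-dimensional $K$-subspace $W = K\cdot 1_{\SK} \oplus \bigoplus_i K[\mu_i]K$ spanned by the components of $u$, so that $u\in \SU\cap W$, which is a finitely generated $\Ga$-module (on the appropriate side); projecting onto the $K\cdot 1$ summand, the image is a finitely generated $\Ga$-submodule of $K$ containing the $K$-component $c$ of $u$. A finitely generated $\Ga$-submodule of $\Frac\Ga$ is contained in a fractional ideal, so $c$ is integral over $\Ga$ when... actually the cleaner route is: $\Ga[c]$ is then a finitely generated $\Ga$-module (being an $\Ga$-submodule of the finitely generated module $\SU\cap W$ projected to $K$, together with closure under multiplication coming from $u$ commuting, or simply because it sits inside a Noetherian module), hence $c$ is integral over $\Ga$, hence $c\in\Ga$ since $\Ga$ is integrally closed by Lemma~\ref{lem:first-lemma}(iii). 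I would write this projection-and-integrality argument out carefully, as it is the crux; the rest is the elementary support computation above.

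\begin{proof}
Let $\SU$ be a left Galois $\Ga$-order in $\SK$ (the right case is entirely analogous). Since $\Ga\subset\SU$ and $\Ga$ is commutative, it suffices to show that any $u\in\SU$ with $\ga u=u\ga$ for all $\ga\in\Ga$ already lies in $\Ga$.

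By Lemma~\ref{lem:Galois-Ring-Equivalent-Condition}(v), write $u=\sum_{i} [a_i\mu_i]$ with the $\mu_i$ lying in pairwise distinct $G$-orbits of $\SM$ and $a_i\in L^{G_{\mu_i}}$; we may assume $a_i\neq 0$ for all $i$. Among the orbits appearing, at most one is the orbit of $1_{\SM}$, and the corresponding term (if present) equals some $c\in K$. Set $W = K\cdot 1_{\SK}\oplus\bigoplus_i K[\mu_i]K$, a finite-dimensional left $K$-subspace of $\SK$ containing $u$. Then $u\in\SU\cap W$, which by hypothesis is a finitely generated left $\Ga$-module. Projecting $\SU\cap W$ onto the summand $K\cdot 1_{\SK}$ along the decomposition $\SK=\bigoplus_{\mu\in\SM/G}K[\mu]K$ yields a finitely generated $\Ga$-submodule $N$ of $K$ with $c\in N$. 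Since $\Ga$ is noetherian (Lemma~\ref{lem:first-lemma}(vi)) and $N$ is a finitely generated $\Ga$-submodule of $K=\Frac\Ga$, the ring $\Ga[c]\subset K$ is contained in a finitely generated $\Ga$-module, hence is itself a finitely generated $\Ga$-module; therefore $c$ is integral over $\Ga$, and since $\Ga$ is integrally closed by Lemma~\ref{lem:first-lemma}(iii), we get $c\in\Ga$.

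Replacing $u$ by $u-c\in\SU$ (still commuting with $\Ga$), we may assume $u$ has no component in the orbit of $1_{\SM}$; it remains to show $u=0$. Write $u=\sum_{\mu\in\SM}x_\mu\mu$ in $\SL$, with $x_\mu\in L$. For any $\ga\in\Ga$, comparing coefficients in $\ga u = u\ga$ gives $\ga x_\mu = x_\mu\,\mu(\ga)$ for every $\mu\in\SM$. If $x_\mu\neq 0$ then, since $L$ is a domain, $\mu(\ga)=\ga$ for all $\ga\in\Ga$, hence $\mu|_K=\mathrm{id}_K$ because $K=\Frac\Ga$ by Lemma~\ref{lem:first-lemma}(ii). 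By the same lemma $L/K$ is Galois with group $G$, so $\mu\in G$; but $\mu=\mu\cdot 1_{\SM}^{-1}\in\SM\SM^{-1}$, so \eqref{it:G1} forces $\mu=1_{\Aut(\La)}$. This contradicts the assumption that $u$ has no component in the orbit of $1_{\SM}$. Hence $\Supp_\SM(u)=\emptyset$, i.e. $u=0$, and the original $u$ equals $c\in\Ga$.
\end{proof}
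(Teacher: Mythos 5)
The second half of your argument (the support computation forcing every $\mu$ in the support of a commuting element to fix $\Ga$, hence to equal $1_{\Aut(\La)}$ by Lemma \ref{lem:first-lemma}(ii) and \eqref{it:G1}) is exactly the first step of the paper's proof and is correct. The genuine gap is in your first half, the step you yourself flagged as the crux: from the fact that the $K$-component $c$ of $u$ lies in a finitely generated $\Ga$-submodule $N$ of $K$ (the projection of $\SU\cap W$), you cannot conclude that $\Ga[c]$ is a finitely generated $\Ga$-module or that $c$ is integral over $\Ga$. Membership in a finitely generated submodule of $\Frac\Ga$ gives no control over the powers of $c$: for instance $c=\ga^{-1}$ with $\ga\in\Ga$ a nonunit lies in the finitely generated module $\Ga\ga^{-1}$ but is not integral over $\Ga$. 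Nothing forces $c^2\in N$, the set $\SU\cap W$ is not a ring, and the commutation of $u$ with $\Ga$ says nothing about products; the phrase ``closure under multiplication coming from $u$ commuting'' does not supply the missing stabilization $cN\subset N$ that an integrality argument would need. As written, the claim $c\in\Ga$ (and hence $u-c\in\SU$) is unjustified.

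The gap disappears if you reorder the argument, and this is what the paper does. Your support computation uses only the identity $\ga u=u\ga$ in $\SL$, not the orbit decomposition of Lemma \ref{lem:Galois-Ring-Equivalent-Condition}(v), not membership of $u-c$ in $\SU$, and not the Galois order hypothesis; so run it first, on $u$ itself, to conclude that $\Supp_\SM(u)\subset\{1_{\Aut(\La)}\}$, i.e.\ $u\in K$ (and then $u\in L^G=K$ automatically since $u\in\SK$). Only now invoke the order property, with the one-dimensional subspace $W=K$: the set $K\cap\SU$ is a \emph{subring} of $K$ containing $\Ga$ which is finitely generated as a $\Ga$-module, so every element of it is integral over $\Ga$, and integral closedness (Lemma \ref{lem:first-lemma}(iii)) gives $K\cap\SU=\Ga$, hence $u\in\Ga$. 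The multiplicative structure that your projection argument lacked is exactly what becomes available once you know $u$ itself, and not merely one of its components, lies in $K$.
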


\begin{proof}
Let $x\in\SU$, $[x,\Ga]=0$. Write $x=a_1\mu_1+a_2\mu_2+\cdots+a_n\mu_n$ for some $a_i\in L$ and $\mu_i\in\SM$. Then for $\ga\in\Ga$, $0=[x,\ga]=\sum_i a_i(\ga-\mu_i(\ga))\mu_i$. Hence $\mu_i(\ga)=\ga$ for all $\ga\in \Ga$. By \eqref{it:G1}, $n=1$ and $\mu_1=1_{\Aut(\La)}$. This shows that $x\in K$. Since $\SU$ is a left or right Galois $\Ga$-order in $\SK$, $K\cap\SU$ is a finitely generated $\Ga$-module. Since $\Ga$ is integrally closed, $K\cap\SU=\Ga$. Thus $x\in \Ga$.
\end{proof}

\begin{Corollary}
$\SK$ is a Galois $\Ga$-order in $\SK$ iff $\La$ is a field.
\end{Corollary}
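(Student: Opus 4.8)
The plan is to notice that, since $\SK$ is always a Galois $\Ga$-ring in itself by Lemma \ref{lem:galois-ring-trivial}(i), the claim concerns only the ``order'' condition, and to reduce that condition to a single test space: the one-dimensional left $K$-subspace $K\cdot 1_{\SK}$.

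\textbf{``Only if''.} The identity $1_{\SK}=1_{\Aut(\La)}$ is $G$-fixed, so $K=L^G\subset\SK$; hence $W\eqdef K\cdot 1_{\SK}$ is a one-dimensional left (and right) $K$-subspace of $\SK$ with $W\cap\SK=W=K$. If $\SK$ is a left Galois $\Ga$-order in $\SK$, then $K=\Frac\Ga$ must be finitely generated as a left $\Ga$-module. Choosing generators with a common denominator $b\in\Ga\setminus\{0\}$ gives $K\subset b^{-1}\Ga$; applying this inclusion to $b^{-2}\in K$ produces $c\in\Ga$ with $b^{-2}=cb^{-1}$, i.e. $1=bc$, so $b$ is a unit and $K=\Ga$ is a field. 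Now $\La$ is a domain that is integral over $\Ga$ and finitely generated as a $\Ga$-module by Lemma \ref{lem:first-lemma}(i),(v), hence a finite-dimensional algebra over the field $\Ga$; multiplication by any nonzero element of $\La$ is then an injective, therefore surjective, $\Ga$-linear endomorphism of $\La$, so $\La$ is a field.

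\textbf{``If''.} Conversely, if $\La$ is a field then $L=\Frac\La=\La$, so $K=L^G=\La^G=\Ga$, and this ring is a field (for $a\in\Ga\setminus\{0\}$ the inverse $a^{-1}\in\La$ is $G$-fixed). Thus $\SK$ is a $K=\Ga$-vector space, so for any finite-dimensional left (or right) $K$-subspace $W\subset\SK$ the intersection $W\cap\SK=W$ is finite-dimensional over $\Ga$, hence a finitely generated $\Ga$-module. Therefore $\SK$ is a Galois $\Ga$-order in $\SK$.

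I expect no genuine obstacle here; the only step beyond bookkeeping is the implication ``$\Frac\Ga$ finitely generated over $\Ga$ $\Rightarrow$ $\Ga$ is a field'', handled by the common-denominator computation above, after which everything follows from $1_{\SK}\in\SK$, the definition of a Galois order applied to the test space $K\cdot 1_{\SK}$, and the structural facts collected in Lemma \ref{lem:first-lemma}.
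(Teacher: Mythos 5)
Your proof is correct, and it takes a somewhat different route from the paper's. The paper settles the forward direction in one line by citing Proposition \ref{prp:GO-Gamma-maximal-commutative}: if $\SK$ were a Galois $\Ga$-order in $\SK$, then $\Ga$ would be maximal commutative in $\SK$, forcing $\Ga=K$ (since $K$ is a commutative subring of $\SK$ containing $\Ga$), and then $\La=L$ by integrality of $\La$ over $\Ga$. You instead argue directly from Definition \ref{def:Galois-order}, applying the order condition to the test space $K\cdot 1_{\SK}\subset\SK$ to conclude that $K=\Frac\Ga$ is finitely generated as a $\Ga$-module, and then the common-denominator computation gives $\Ga=K$; the paper's proposition reaches the same intermediate conclusion in the case $\SU=\SK$ via the integral closedness of $\Ga$ (a finitely generated $\Ga$-submodule of $K$ containing $\Ga$ consists of elements integral over $\Ga$), whereas your argument needs neither integral closedness nor the separation axiom \eqref{it:G1} used in the proof of that proposition. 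So the paper's route is shorter given that the proposition is already available, while yours is self-contained and slightly more elementary. The backward direction is essentially identical in both: $\La=L$ gives $\Ga=K$, every finite-dimensional $K$-subspace of $\SK$ is trivially a finitely generated $\Ga$-module, and $\SK$ is a Galois $\Ga$-ring in itself by Lemma \ref{lem:galois-ring-trivial}(i).
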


\begin{proof}
($\Rightarrow$): Proposition \ref{prp:GO-Gamma-maximal-commutative} implies that $\Ga=K$, hence $\La=L$ since $\La$ is integral over $\Ga$.

\smallskip\noindent
($\Leftarrow$): If $\La=L$ then $\Ga=K$, hence this direction is immediate by Definition \ref{def:Galois-order}.
\end{proof}

\begin{Lemma} \label{lem:sub-order}
Let $\SU_1$ and $\SU_2$ be two Galois $\Ga$-rings in $\SK$ such that $\SU_1\subset \SU_2$. If $\SU_2$ is a Galois $\Ga$-order in $\SK$, then so too is $\SU_1$.
\end{Lemma}

\begin{proof}
Immediate by the definition of Galois order and that $\Ga$ is a noetherian ring.
\end{proof}

Part (ii) of the next lemma is probably well-known but we could not find a proof in the literature.

\begin{Lemma} \label{lem:Dedekind}
Let $A$ be an integral domain, $F$ be a field, and $\si_1,\si_2,\ldots,\si_n$ be pairwise distinct injective ring homomorphisms from $A$ to $F$. Then:
\begin{enumerate}[{\rm (i)}]
\item $\{\si_1,\si_2,\ldots,\si_n\}$ is a linearly independent subset of the $F$-vector space $F^A$;
\item There exists $(a_1,a_2,\ldots,a_n)\in A^n$ such that the determinant of $\big(\si_j(a_i)\big)_{i,j=1}^n$ is nonzero.
\end{enumerate}
\end{Lemma}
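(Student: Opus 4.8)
The statement is essentially a strengthening of Dedekind's lemma on linear independence of characters, adapted to injective ring homomorphisms into a field, together with the observation that linear independence of the rows of an "infinite matrix" $\big(\si_j(a)\big)_{j=1}^n$, indexed by $a\in A$, forces the existence of $n$ columns giving a nonzero determinant. I would prove (i) first and then deduce (ii) as a consequence of (i) by a standard linear-algebra argument.

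For part (i) the plan is the classical argument by contradiction on the length of a shortest nontrivial relation. Suppose $\sum_{j=1}^n c_j\si_j = 0$ in $F^A$ with all $c_j\in F^\times$ and $n$ minimal (necessarily $n\ge 2$, since each $\si_j$ is injective hence nonzero as a function $A\to F$). Pick $b\in A$ with $\si_1(b)\neq\si_n(b)$, which exists because $\si_1\neq\si_n$ as functions. Applying the relation to $ba$ for arbitrary $a\in A$ and using that $\si_j$ is multiplicative gives $\sum_j c_j\si_j(b)\si_j(a)=0$, i.e. $\sum_j \big(c_j\si_j(b)\big)\si_j = 0$. Subtracting $\si_n(b)$ times the original relation yields $\sum_{j=1}^{n-1} c_j\big(\si_j(b)-\si_n(b)\big)\si_j = 0$, a relation of length $\le n-1$ whose $j=1$ coefficient $c_1\big(\si_1(b)-\si_n(b)\big)$ is nonzero, contradicting minimality. (The only mild point to check is that $\si_j(b)\in F$ makes sense as a scalar and that we are working inside the $F$-vector space $F^A$ of all functions $A\to F$; this is immediate.)

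For part (ii), consider the $F$-linear map $T\colon F^n\to F^A$ sending the $i$-th standard basis vector to $\si_i$; by part (i) this map is injective, so its image is an $n$-dimensional subspace $S\subset F^A$. Equivalently, the "matrix" $M$ with rows indexed by $i\in\{1,\dots,n\}$ and columns indexed by $a\in A$, with entry $\si_i(a)$, has $F$-row-rank equal to $n$. A matrix (even with infinitely many columns) whose row rank is $n$ must have $n$ columns that are linearly independent, i.e. there exist $a_1,\dots,a_n\in A$ such that the $n\times n$ minor $\big(\si_i(a_j)\big)$ has rank $n$, hence nonzero determinant. Concretely one can build $a_1,\dots,a_n$ greedily: having chosen $a_1,\dots,a_k$ with the $k$ rows of $\big(\si_i(a_\ell)\big)_{1\le \ell\le k}$ spanning a $k$-dimensional space, if no $a_{k+1}$ increased the rank then every column of $M$ would lie in that $k$-dimensional span, contradicting row-rank $n$; so some $a_{k+1}$ works, and after $n$ steps the square matrix $\big(\si_i(a_j)\big)_{i,j=1}^n$ is invertible.

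The argument for (i) is entirely routine; the only place needing a moment's care is (ii), where one must phrase "row rank $n$ forces $n$ independent columns" correctly for a matrix with an infinite column index set $A$ — but the greedy/incremental selection above handles this cleanly, and I expect no real obstacle. I would present (i) in full and (ii) via the incremental selection, remarking that (ii) is the form actually used later (for building Galois-order structure via a Vandermonde-type nonvanishing).
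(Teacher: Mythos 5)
Your proposal is correct. It differs from the paper mostly in packaging. For part (i) the paper simply cites Dedekind's independence theorem (Jacobson), whereas you reprove it by the classical minimal-length-relation argument; that argument is valid as written (multiplicativity of the $\si_j$ and injectivity, which rules out the zero map, are exactly what is needed). For part (ii) the paper argues by induction on $n$: given $a_1,\dots,a_{n-1}$ with nonsingular minor, it solves the linear system $\sum_{j<n}\si_j(a_i)x_j=\si_n(a_i)$, uses (i) to find $a_n$ with $\bigl(\si_n-\sum_j x_j\si_j\bigr)(a_n)\neq 0$, and finishes by column operations on the determinant. Your rank-theoretic version --- the columns $\bigl(\si_1(a),\dots,\si_n(a)\bigr)$ must span all of $F^n$, else a nonzero annihilating functional $c$ would give $\sum_i c_i\si_i=0$ contradicting (i), and then a greedy choice of $n$ independent columns --- is the same mechanism in abstract form: the paper's inductive step is precisely your greedy step, with the functional $\si_n-\sum_j x_j\si_j$ playing the role of the annihilator of the span of the columns already chosen. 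The only point you gloss is the justification of ``all columns in a proper subspace contradicts row rank $n$'': for a matrix with infinitely many columns this is exactly the annihilator argument just described, i.e.\ it is where (i) is invoked, so you should write that one line out rather than appeal to row rank $=$ column rank as a black box. With that sentence added, your proof is complete and, if anything, slightly cleaner than the explicit determinant manipulation in the paper; the paper's version has the minor advantage of exhibiting the nonzero determinant value directly.
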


\begin{proof}
(i): This follows from Dedekind's Independence Theorem, see e.g. \cite[Sec.~4.14]{Jacobson1}.

\smallskip\noindent
(ii): We use induction on $n$. For $n=1$, we may take $a_1=1$. For $n>1$, assume that we found $(a_1,a_2,\ldots,a_{n-1})\in A^{n-1}$ such that $\big(\si_j(a_i)\big)_{i,j=1}^{n-1}$ has nonzero determinant. Then the system
\begin{equation}\label{eq:ded1}
\si_1(a_i) x_1 + \si_2(a_i) x_2 + \cdots + \si_{n-1}(a_i) x_{n-1} = \si_n(a_i),\qquad i=1,2,\ldots,n-1,
\end{equation}
has a solution $(x_1,x_2,\ldots,x_{n-1})\in F^{n-1}$.
By part (i), there exists $a_n\in A$ such that
\begin{equation}\label{eq:ded2}
\big(\si_n - \sum_{i=1}^{n-1}x_i\si_i\big)(a_n)\neq 0.
\end{equation}
Performing column operations and using \eqref{eq:ded1}, we can eliminate the rightmost column except for the bottom right entry:
\[
\begin{vmatrix}
\si_1(a_1) & \si_2(a_1) & \cdots & \si_n(a_1)\\
\si_1(a_2) & \si_2(a_2) & \cdots & \si_n(a_2)\\
  \vdots   & \vdots     & \ddots & \vdots \\
\si_1(a_n) & \si_2(a_n) & \cdots & \si_n(a_n)\\
\end{vmatrix}
=
\begin{vmatrix}
\si_1(a_1) & \si_2(a_1) & \cdots & \si_{n-1}(a_1) & 0\\
\si_1(a_2) & \si_2(a_2) & \cdots & \si_{n-1}(a_2) & 0\\
  \vdots   & \vdots     & \vdots & \vdots         & \vdots\\
\si_1(a_n) & \si_2(a_n) & \cdots & \si_{n-1}(a_n) & \si_n(a_n) - \sum_{i=1}^{n-1} x_i \si_i(a_n)\\
\end{vmatrix}
\]
This determinant is nonzero by \eqref{eq:ded2} and the induction hypothesis.
\end{proof}

\begin{Definition} \label{def:evaluation}
For $X=\sum_{\mu\in\SM} x_\mu \mu \in \SL$ and $a\in L$ we define the \emph{evaluation of $X$ at $a$} to be
\begin{equation} \label{eq:evaluation}
X(a)= \sum_{\mu\in\SM} x_\mu \cdot \big(\mu(a)\big) \in L.
\end{equation}
\end{Definition}

\begin{Lemma} \label{lem:evaluation}
Evaluation of $\SL$ on $L$ satisfies the following properties.
\begin{enumerate}[{\rm (i)}]
\item $(X,a)\mapsto X(a)$ is a $\Z$-bilinear map $\SL\times L\to L$.
\item $X\big(Y(a)\big)=(XY)(a)$ for all $X,Y\in\SL$ and $a\in L$.
\item If $X\in\SK$ and $a\in K$ then $X(a)\in K$.
\end{enumerate}
\end{Lemma}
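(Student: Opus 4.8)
The plan is to prove all three parts by direct computation from the definitions of the skew monoid ring multiplication and the $G$-action, since the evaluation map is essentially a substitution.

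For (i), write $X=\sum_{\mu}x_\mu\mu$ and $Y=\sum_\mu y_\mu\mu$ in $\SL$, with $x_\mu,y_\mu\in L$ of finite support, so that $X(a)$ is a finite sum lying in $L$ (each $\mu\in\SM\subset\Aut(\La)$ extends to a ring automorphism of $L=\Frac\La$). Additivity in the first argument is immediate from $(X+Y)(a)=\sum_\mu(x_\mu+y_\mu)\mu(a)$. Additivity in the second argument follows because each $\mu$ is additive, so $X(a+b)=\sum_\mu x_\mu\bigl(\mu(a)+\mu(b)\bigr)=X(a)+X(b)$. This is routine.

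For (ii), expand the product in $\SL$: if $X=\sum_\mu x_\mu\mu$ and $Y=\sum_\nu y_\nu\nu$, then $XY=\sum_{\mu,\nu}x_\mu\mu(y_\nu)\,\mu\nu$, where $\mu\nu$ denotes the product in the monoid $\SM$, i.e. the composite automorphism $\mu\circ\nu$. Therefore
\[
(XY)(a)=\sum_{\mu,\nu}x_\mu\mu(y_\nu)\,(\mu\nu)(a)=\sum_{\mu,\nu}x_\mu\,\mu\bigl(y_\nu\,\nu(a)\bigr)=\sum_\mu x_\mu\,\mu\Bigl(\sum_\nu y_\nu\,\nu(a)\Bigr)=\sum_\mu x_\mu\,\mu\bigl(Y(a)\bigr)=X\bigl(Y(a)\bigr),
\]
using that each $\mu$ is a ring homomorphism and that $(\mu\nu)(a)=\mu(\nu(a))$.

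For (iii), recall that $g\in G$ acts on $\SL$ by $g(a\mu)=g(a)\,{}^g\mu$, that $X\in\SK$ means $g(X)=X$ for all $g\in G$, and that $a\in K$ means $g(a)=a$ for all $g\in G$. I would first record the general equivariance identity: for any $Z=\sum_\mu z_\mu\mu\in\SL$, any $g\in G$, and any $b\in L$,
\[
g\bigl(Z(b)\bigr)=\sum_\mu g(z_\mu)\,g\bigl(\mu(b)\bigr)=\sum_\mu g(z_\mu)\,{}^g\mu\bigl(g(b)\bigr)=\bigl(g(Z)\bigr)\bigl(g(b)\bigr),
\]
where the middle equality is just the definition ${}^g\mu=g\circ\mu\circ g^{-1}$ rewritten as $g\circ\mu=({}^g\mu)\circ g$, and the last equality uses that $g(Z)=\sum_\mu g(z_\mu)\,{}^g\mu$. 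Specializing to $Z=X\in\SK$ and $b=a\in K$, so that $g(Z)=X$ and $g(b)=a$, yields $g\bigl(X(a)\bigr)=X(a)$ for all $g\in G$, i.e. $X(a)\in L^G=K$.

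None of the three parts presents a genuine obstacle; the only point requiring a little care is the bookkeeping of the $G$-action in (iii), namely the conjugation identity $g\circ\mu=({}^g\mu)\circ g$ together with the fact, guaranteed by \eqref{it:G2}, that ${}^g\mu$ again lies in $\SM$ so that $g(Z)$ is a well-defined element of $\SL$.
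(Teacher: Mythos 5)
Your proof is correct and follows essentially the same route as the paper: parts (i) and (ii) are the routine expansions the paper dismisses as easy to check, and for (iii) your equivariance identity $g\bigl(Z(b)\bigr)=\bigl(g(Z)\bigr)\bigl(g(b)\bigr)$, specialized to $G$-invariant $X$ and $a$, is just a repackaging of the paper's computation using $g(x_\mu)=x_{{}^g\mu}$ and reindexing the sum over the bijection $\mu\mapsto{}^g\mu$.
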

\begin{proof}
(i)(ii): Easy to check.

\smallskip\noindent
(iii): Let $X\in\SK$. Since $X\in \SL$ we can write $X=\sum_{\mu\in\SM}x_\mu \mu$
for some unique $x_\mu\in L$, at most finitely many nonzero. Since $X\in\SK=\SL^G$ we have $g(x_\mu) = x_{{}^g\mu}$ for all $g\in G$ and all $\mu\in\SM$. Thus for any $a\in K$ and $g\in G$:
\[g \big(X(a)\big) =\sum_{\mu\in\SM} g (x_\mu) \cdot g \big(\mu(a)\big) = \sum_{\mu\in\SM} x_{{}^g \mu} \cdot {}^g {\mu} \big(g(a)\big) = \sum_{\mu\in\SM} x_\mu \cdot \mu(a) = X(a).\]
This shows that $X(a)\in L^G=K$.
\end{proof}

\begin{Example} \label{ex:symmetric-difference-operators-3}
Continuing Example \ref{ex:symmetric-difference-operators-2}, we have for any $g\in L$,
\[X_f(g)=\sum_{i=1}^n \frac{f_i\mu_i(g)}{\displaystyle \prod_{1\le j\le n,\, j\neq i} (x_j-x_i)}.\]
\end{Example}

We are ready to prove the main result of this section.

\begin{Theorem} \label{thm:KG-Galois-order}
\begin{equation} \label{eq:KG-definition}
\SK_\Ga \eqdef \big\{X\in \SK\mid X(\ga)\in \Ga \;\;\forall \ga\in \Ga\big\}
\end{equation}
is a Galois $\Ga$-order in $\SK$.
\end{Theorem}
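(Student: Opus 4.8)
The plan is to verify the three defining conditions of a Galois $\Ga$-order: that $\SK_\Ga$ is a $\Ga$-subring of $\SK$, that it is a Galois $\Ga$-ring (i.e. satisfies $\SK_\Ga K = \SK = K\SK_\Ga$), and finally that it satisfies the ``order'' finiteness condition on both sides. The subring property is the quick part: by Lemma \ref{lem:evaluation}(i)--(iii), evaluation is $\Z$-bilinear, multiplicative in the sense $X(Y(a)) = (XY)(a)$, and preserves $K$, so if $X, Y\in\SK_\Ga$ then $(XY)(\ga) = X(Y(\ga))\in X(\Ga)\subset\Ga$ and $(X-Y)(\ga) = X(\ga)-Y(\ga)\in\Ga$; also every $\ga'\in\Ga$ lies in $\SK_\Ga$ since $\ga'(\ga) = \ga'\ga\in\Ga$ (here $\ga'$ is the degree-$1_{\Aut(\La)}$ element). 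So $\SK_\Ga$ is a $\Ga$-subring of $\SK$.

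For the Galois ring condition, the first thing I would check is that $\SK_\Ga$ contains enough elements. The natural candidate is $(\La\ast\SM)^G$, which by Lemma \ref{lem:galois-ring-trivial}(iii) is already a Galois $\Ga$-ring in $\SK$. I claim $(\La\ast\SM)^G\subset\SK_\Ga$: indeed for $X = \sum_\mu x_\mu\mu$ with all $x_\mu\in\La$, and any $\ga\in\Ga\subset\La$, we get $X(\ga) = \sum_\mu x_\mu\mu(\ga)\in\La$ since $\mu(\La) = \La$; and $X(\ga)\in K$ by Lemma \ref{lem:evaluation}(iii); so $X(\ga)\in\La\cap K = \Ga$ (using Lemma \ref{lem:first-lemma}, $\Ga$ integrally closed with $\Frac\Ga = K$, or directly $\La^G\cap L^G$... actually $\La\cap K = \La\cap L^G = \La^G = \Ga$). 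Hence $(\La\ast\SM)^G\subset\SK_\Ga\subset\SK$, and by Lemma \ref{lem:galois-ring-trivial}(ii) (monotonicity of the Galois-ring property under enlarging) $\SK_\Ga$ is a Galois $\Ga$-ring in $\SK$.

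The main work, and the main obstacle, is the order condition: given a finite-dimensional right $K$-subspace $W\subset\SK$, show $W\cap\SK_\Ga$ is a finitely generated right $\Ga$-module (and symmetrically on the left). Using Lemma \ref{lem:Galois-Ring-Equivalent-Condition}(v), $\SK = \bigoplus_{\mu\in\SM/G} K[\mu]K$, I would reduce to the case where $W$ is spanned over $K$ by finitely many $[\mu_i]$, $i = 1,\dots,n$, from distinct $G$-orbits (enlarge $W$ if necessary). Then an element $X = \sum_i [a_i\mu_i]$ with $a_i\in L^{G_{\mu_i}}$ lies in $\SK_\Ga$ iff $X(\ga)\in\Ga$ for all $\ga\in\Ga$. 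The strategy is to pin down each coefficient $a_i$ by evaluating at sufficiently many elements of $\Ga$: using Lemma \ref{lem:Dedekind}(ii) applied to the distinct embeddings obtained from the cosets $g G_{\mu_i}\mapsto g\circ\mu_i|_{\Ga}$ (these are pairwise distinct injective homomorphisms $\Ga\to L$ by \eqref{it:G1}, after checking $X$ on $\Ga$ determines the $a_i$), we can choose finitely many $\ga_1,\dots,\ga_N\in\Ga$ so that the resulting linear system expresses each $a_i$ as an $\La$-linear (indeed $\Ga$-linear after clearing a fixed nonzero determinant $d\in\Ga$) combination of the values $X(\ga_k)\in\Ga$. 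This realizes $W\cap\SK_\Ga$ as a $\Ga$-submodule of $d^{-1}\cdot(\text{f.g. }\Ga\text{-module built from }\La\mu_i)$, which is finitely generated since $\La$ is a finitely generated $\Ga$-module (Lemma \ref{lem:first-lemma}(v)) and $\Ga$ is noetherian (Lemma \ref{lem:first-lemma}(vi)). The delicate points are: verifying that evaluation on $\Ga$ alone (not all of $L$) separates the summands $[\mu_i]$ — this is exactly where assumption \eqref{it:G1} and the fact that $\Frac\Ga = K$ are used, to see the restricted maps ${}^g\mu_i|_\Ga$ are distinct — and keeping the denominators under control so the bound is a genuine finitely generated $\Ga$-module rather than just a $K$-subspace; the left-module case is handled the same way using the analogous ``right evaluation'' or by a symmetric argument.
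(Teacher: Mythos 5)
Your proposal is correct and follows essentially the same route as the paper: the subring and Galois-ring steps are identical (via Lemma \ref{lem:evaluation} and $(\La\ast\SM)^G\subset\SK_\Ga$ plus Lemma \ref{lem:galois-ring-trivial}), and the order condition is established by the same Dedekind-determinant argument, i.e. Lemma \ref{lem:Dedekind}(ii) applied to restrictions to $\Ga$ (distinct by \eqref{it:G1}), multiplying by the adjugate to land in $\tfrac{1}{d}\La^n$, and finishing with Lemma \ref{lem:first-lemma}(v)(vi). The only cosmetic differences are that you reduce inside $\SK$ via $\SK=\bigoplus_{\mu\in\SM/G}K[\mu]K$, so your Dedekind step runs over the conjugates ${}^g\mu_i\big|_\Ga$, whereas the paper passes to an $L$-subspace $\bigoplus_i L\mu_i$ of $\SL$ and treats the right-hand side via the twisted action \eqref{eq:proof-bimodule} (the point your ``symmetric argument'' remark glosses), and your determinant $d$ lies in $\La$ rather than $\Ga$, which is harmless.
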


\begin{proof}
If $X,Y\in\SK_\Ga$ then $(XY)(\ga)=X\big(Y(\ga)\big)\in\Ga$ and $(X\pm Y)(\ga)=X(\ga)\pm Y(\ga)\in\Ga$ for all $\ga\in \Ga$. Thus $\SK_\Ga$ is a subring of $\SK$. Since $\mu(\Ga)\subset\La$ for any $\mu\in\SM$ we have $X(\Ga)\subset\La$ for any $X\in\La\ast\SM$. By Lemma \ref{lem:evaluation}(iii), $X(\Ga)\subset\La\cap K=\Ga$ for any $X\in(\La\ast\SM)^G$. Thus $(\La\ast\SM)^G\subset\SK_\Ga$. By Lemma \ref{lem:galois-ring-trivial}(iii) and (ii), $\SK_\Ga$ is a Galois $\Ga$-ring in $\SK$.

To show that $\SK_\Ga$ is a Galois $\Ga$-order, let $W$ be a finite-dimensional left $K$-subspace of $\SK$. Then $LW$ is a finite-dimensional left $L$-subspace of $\SL$.
Suppose that $\SK_\Ga\cap LW$ is finitely generated as a left $\Ga$-module. Then, since $\Ga$ is noetherian, $\SK_\Ga\cap W$ is finitely generated as a left $\Ga$-module. Similarly on the right. So it suffices to show that for any finite-dimensional left (respectively right) $L$-subspace $W\subset \SL$, the set $\SK_\Ga\cap W$ is finitely generated as a left (respectively right) $\Ga$-module.
Since $L\mu=\mu L$ for any $\mu\in\SM$, left and right $L$-subspaces of $\SL$ are the same thing.

Let $\{w_1,w_2,\ldots,w_d\}$ be a left $L$-basis for $W$. Write each $w_i$ as a finite linear combination of elements of $\SM$. Thus $W$ is contained in a subspace of the form $\tilde{W}=\bigoplus_{i=1}^n L\mu_i$ where $\mu_i\in \SM$. Assume for a moment that $\SK_\Ga\cap\tilde{W}$ is finitely generated as a left and right $\Ga$-module. Then, since $\Ga$ is noetherian, the submodule $\SK_\Ga\cap W$ is also finitely generated. This shows that we may without loss of generality assume that $W=\bigoplus_{i=1}^n L\mu_i$ for some finite subset $\{\mu_1,\mu_2,\ldots,\mu_n\}$ of $\SM$.
Define a map
\[\psi:\SK_\Ga\cap W\to L^n\]
by
\[\psi\big(\sum_{i=1}^n a_i\mu_i\big) = (a_1,a_2,\ldots,a_n)\]
Clearly $\psi$ is injective. If we view $L^n$ as a $\Ga$-bimodule via
\begin{equation} \label{eq:proof-bimodule}
\ga(a_1,a_2,\ldots,a_n)\ga'=\big(\ga a_1\mu_1(\ga'),\ga a_2\mu_2(\ga'),\ldots,\ga a_n\mu_n(\ga')\big)
\end{equation}
for $\ga,\ga'\in\Ga$, then $\psi$ is a $\Ga$-bimodule homomorphism. Let  $a=(a_1,a_2,\ldots,a_n)\in L^n$ belong to the image of $\psi$. Then $\sum_i a_i\mu_i(\ga)\in \Ga$ for all $\ga\in \Ga$. By Lemma \ref{lem:Dedekind}(ii) applied to $\mu_i\big|_\Ga$, which are pairwise distinct by Assumption \eqref{it:G1}, there are $\ga_i\in\Ga$ such that
$d\eqdef \det A\neq 0$, where $A=\big(\mu_j(\ga_i)\big)_{i,j=1}^n$. Now
$\sum_{i=1}^n a_i\mu_i(\ga_j) = \tilde{\ga}_j$
for some $\tilde{\ga}=(\tilde{\ga}_1,\tilde{\ga}_2,\ldots,\tilde{\ga}_n)\in\Ga^n$. That is, $A\cdot a = \tilde{\ga}$.
All the entries of $A$ are in $\La$, hence that is true for the adjugate matrix $A^\prime$ satisfying $A^\prime\cdot A = d\cdot I_n$. So $d\cdot a=A^\prime\cdot \tilde{\ga}\in \La^n$ for $i=1,2,\ldots,n$. That is, $a\in \frac{1}{d}\La^n$.
Thus the image of $\psi$ is contained in $\frac{1}{d}\La^n$ which is a $\Ga$-subbimodule of $L^n$ which is finitely generated as a left $\Ga$-module and, with respect to the twisted action \eqref{eq:proof-bimodule}, as a right $\Ga$-module. Since $\Ga$ is noetherian, $\SK_\Ga\cap W$ is also a finitely generated left and right $\Ga$-module.
\end{proof}

\begin{Definition}
 $\SK_\Ga$ is the \emph{standard Galois $\Ga$-order in $\SK$}.
\end{Definition}

\begin{Corollary} \label{cor:GO-criterion}
If $\SU$ is a Galois $\Ga$-ring in $\SK$ and $\SU\subset\SK_\Ga$, then $\SU$ is a Galois $\Ga$-order in $\SK$.
\end{Corollary}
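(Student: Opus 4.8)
The plan is to deduce this immediately from the two results just established: Theorem \ref{thm:KG-Galois-order}, which says $\SK_\Ga$ is a Galois $\Ga$-order in $\SK$, and Lemma \ref{lem:sub-order}, which says that a Galois $\Ga$-ring contained in a Galois $\Ga$-order is itself a Galois $\Ga$-order. Since $\SU$ is assumed to be a Galois $\Ga$-ring in $\SK$ with $\SU\subset\SK_\Ga$, and $\SK_\Ga$ is itself a Galois $\Ga$-ring in $\SK$ (being a Galois order, hence in particular a Galois ring), the pair $(\SU,\SK_\Ga)$ satisfies the hypotheses of Lemma \ref{lem:sub-order}.

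Concretely, I would first note that $\SK_\Ga$ is a Galois $\Ga$-ring in $\SK$ (Theorem \ref{thm:KG-Galois-order}), and that it is a Galois $\Ga$-order in $\SK$ (same theorem). Then I would apply Lemma \ref{lem:sub-order} with $\SU_1=\SU$ and $\SU_2=\SK_\Ga$: both are Galois $\Ga$-rings in $\SK$, we have $\SU_1\subset\SU_2$, and $\SU_2$ is a Galois $\Ga$-order in $\SK$; the conclusion is that $\SU_1=\SU$ is a Galois $\Ga$-order in $\SK$.

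There is essentially no obstacle here — this corollary is a one-line consequence of the preceding theorem and lemma. The only point worth a moment's care is the logical bookkeeping: Lemma \ref{lem:sub-order} requires \emph{both} rings to already be known to be Galois $\Ga$-rings, and this is exactly why the statement of the corollary hypothesizes that $\SU$ is a Galois $\Ga$-ring (it does not follow automatically merely from $\SU\subset\SK_\Ga$; one genuinely needs the generation/support condition of Proposition \ref{prp:Galois-Ring-Equivalent-Condition}, or some other argument, to know $\SU K=\SK=K\SU$). So the proof should simply invoke the hypothesis, cite Theorem \ref{thm:KG-Galois-order} for the fact that $\SK_\Ga$ is a Galois $\Ga$-order (and hence a Galois $\Ga$-ring) in $\SK$, and then quote Lemma \ref{lem:sub-order}.
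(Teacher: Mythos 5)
Your proof is correct and is exactly the paper's argument: the paper's proof of this corollary is just the citation of Theorem \ref{thm:KG-Galois-order} and Lemma \ref{lem:sub-order}, which is what you do. Your extra remark that the hypothesis ``$\SU$ is a Galois $\Ga$-ring'' is genuinely needed (since containment in $\SK_\Ga$ alone does not give $\SU K=\SK=K\SU$) is accurate bookkeeping and consistent with how the paper uses the corollary.
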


\begin{proof}
By Theorem \ref{thm:KG-Galois-order} and Lemma \ref{lem:sub-order}.
\end{proof}

\begin{Definition}
If $\SU$ is a Galois $\Ga$-order in $\SK$ such that $\SU\subset\SK_\Ga$, then $\SU$ is a \emph{principal Galois $\Ga$-order in $\SK$}.
\end{Definition}

\begin{Corollary} \label{cor:GO-criterion-gen}
If $\SU$ is a Galois $\Ga$-ring in $\SK$ and $\SU$ is generated as a ring by a subset $\SX$ such that $X(\ga)\in\Ga$ for all $(\ga,X)\in\Ga\times\SX$, then $\SU$ is a principal Galois $\Ga$-order in $\SK$.
\end{Corollary}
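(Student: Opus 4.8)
The plan is to deduce this directly from Theorem~\ref{thm:KG-Galois-order} via Corollary~\ref{cor:GO-criterion}: since $\SU$ is already assumed to be a Galois $\Ga$-ring in $\SK$, it suffices to prove the inclusion $\SU\subset\SK_\Ga$.

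First I would recall from the opening lines of the proof of Theorem~\ref{thm:KG-Galois-order} that $\SK_\Ga$ is a \emph{subring} of $\SK$; this rests on the $\Z$-bilinearity of evaluation and the identity $(XY)(\ga)=X\big(Y(\ga)\big)$ from Lemma~\ref{lem:evaluation}(i)--(ii). Next I would check that every generator of $\SU$ lies in $\SK_\Ga$. Each $X\in\SX$ does so by hypothesis, since $X\in\SU\subset\SK$ and $X(\ga)\in\Ga$ for all $\ga\in\Ga$. Moreover $\Ga\subset\SK_\Ga$: an element $\ga\in\Ga$, viewed inside $\SL$ as $\ga\cdot 1_{\Aut(\La)}$, evaluates on any $\ga'\in\Ga$ to $\ga\ga'\in\Ga$. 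Hence $\Ga\cup\SX\subset\SK_\Ga$, and since $\SK_\Ga$ is a subring of $\SK$ while $\SU$ is the subring of $\SK$ generated by $\Ga\cup\SX$, we conclude $\SU\subset\SK_\Ga$.

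Finally, Corollary~\ref{cor:GO-criterion} applies to the Galois $\Ga$-ring $\SU$ contained in $\SK_\Ga$ and yields that $\SU$ is a Galois $\Ga$-order in $\SK$; combined with $\SU\subset\SK_\Ga$, this is by definition the statement that $\SU$ is a principal Galois $\Ga$-order in $\SK$. I expect no genuine obstacle here, as all the substance has been packaged into Theorem~\ref{thm:KG-Galois-order}; the only point requiring a moment's care is that closure of $\SK_\Ga$ under the ring operations (via Lemma~\ref{lem:evaluation}) is exactly what propagates the evaluation condition from the generating set $\Ga\cup\SX$ to the whole of $\SU$.
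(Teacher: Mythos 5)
Your proposal is correct and follows essentially the same route as the paper: the paper's proof is exactly ``$\SX\subset\SK_\Ga$ by \eqref{eq:KG-definition}, $\SK_\Ga$ is a ring so $\SU\subset\SK_\Ga$, now apply Corollary~\ref{cor:GO-criterion}'', and you simply make explicit the points the paper leaves implicit (that $\SK_\Ga$ is closed under the ring operations via Lemma~\ref{lem:evaluation}, and that $\Ga\subset\SK_\Ga$).
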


\begin{proof}
By \eqref{eq:KG-definition}, $\SX\subset\SK_\Ga$. Since $\SK_\Ga$ is a ring, $\SU\subset\SK_\Ga$. Now use Corollary \ref{cor:GO-criterion}.
\end{proof}

\begin{Definition}
For a linear character $\chi:G\to K^\times$, the set of \emph{$\chi$-relative $G$-invariants} in $\La$ is
\begin{equation}
\La^G_\chi=\{a\in \La\mid g(a)=\chi(g)a\;\forall g\in G\}.
\end{equation}
\end{Definition}

The following is a useful method to produce interesting elements of $\SK_\Ga$.

\begin{Lemma} \label{lem:principal-test}
If $\chi:G\to K^\times$ is a linear character such that $\La^G_\chi=\Ga d_\chi$ for some $d_\chi\in \La$, then
\begin{equation}
\SK\cap \frac{1}{d_\chi}(\La\ast\SM)\subset \SK_\Ga.
\end{equation}
\end{Lemma}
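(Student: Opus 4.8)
The plan is to pick an arbitrary $X\in\SK\cap\frac{1}{d_\chi}(\La\ast\SM)$ and an arbitrary $\ga\in\Ga$, and to show directly that the evaluation $X(\ga)$ (Definition~\ref{def:evaluation}) lies in $\Ga$; by \eqref{eq:KG-definition} this is exactly what is needed. Write $X=\sum_{\mu\in\SM}x_\mu\mu$ with $x_\mu\in L$, almost all zero; then the hypothesis $X\in\frac{1}{d_\chi}(\La\ast\SM)$ says precisely that $d_\chi x_\mu\in\La$ for every $\mu\in\SM$.

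First I would observe that $d_\chi X(\ga)\in\La$: since $\ga\in\Ga\subset\La$ and each $\mu\in\SM$ restricts to an automorphism of $\La$, we have $\mu(\ga)\in\La$, so $d_\chi X(\ga)=\sum_{\mu}(d_\chi x_\mu)\,\mu(\ga)$ is a finite $\La$-linear combination of elements of $\La$. Next I would identify $d_\chi X(\ga)$ as a $\chi$-relative $G$-invariant. Applying Lemma~\ref{lem:evaluation}(iii) with $\ga\in\Ga\subset K$ gives $X(\ga)\in K=L^G$, hence $g\big(X(\ga)\big)=X(\ga)$ for all $g\in G$, and therefore $g\big(d_\chi X(\ga)\big)=g(d_\chi)\,g\big(X(\ga)\big)=\chi(g)\,d_\chi X(\ga)$. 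Combining this with the previous step yields $d_\chi X(\ga)\in\La^G_\chi=\Ga d_\chi$, so $d_\chi X(\ga)=\ga' d_\chi$ for some $\ga'\in\Ga$.

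Finally, since $\La$ is a domain and $d_\chi\neq 0$ (implicit in the statement, as $1/d_\chi$ is written), cancelling $d_\chi$ gives $X(\ga)=\ga'\in\Ga$, which is what was claimed. I do not expect any real obstacle here: the only step that invokes an earlier result rather than a one-line check is the $G$-invariance of $X(\ga)$, which is precisely Lemma~\ref{lem:evaluation}(iii); everything else is bookkeeping with finite sums together with a cancellation in the domain $\La$. The one point worth flagging explicitly in the final write-up is the standing assumption $d_\chi\neq 0$.
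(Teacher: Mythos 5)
Your proof is correct and is essentially the paper's own argument, just written out in detail: the paper's one-line proof computes $X(\ga)\in\big(\tfrac{1}{d_\chi}\La\big)\cap K=\tfrac{1}{d_\chi}\La^G_\chi=\Ga$, which is exactly your combination of $d_\chi X(\ga)\in\La$, $G$-invariance of $X(\ga)$ via Lemma \ref{lem:evaluation}(iii), and the relative-invariance identity $\La^G_\chi=\Ga d_\chi$ (noting, as you implicitly do, that $d_\chi=1\cdot d_\chi\in\La^G_\chi$ so $g(d_\chi)=\chi(g)d_\chi$).
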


\begin{proof}
If $x\in\SK\cap\frac{1}{d_\chi}(\La\ast\SM)$ then for any $\ga\in \Ga$ we have $ x(\ga)\in \big(\frac{1}{d_\chi}\La\big)\cap K = \frac{1}{d_\chi}\La^G_\chi=\Ga$.
\end{proof}

\begin{Example} \label{ex:symmetric-difference-operators-4}
Continuing Example \ref{ex:symmetric-difference-operators-2},
let $\sgn:S_n\to\{1,-1\}$ be the sign character and $d_{\sgn}=\prod_{1\le i<j\le n} (x_i-x_j)\in\La$ be the Vandermonde determinant. Since any alternating polynomial is divisible by $d_{\sgn}$ we have $\La^{S_n}_{\sgn}=\Ga d_{\sgn}$. Observe that $d_\chi X_f \in \La\ast\SM$. Hence $X_f\in \SK\cap\frac{1}{d_\chi}(\La\ast\SM)$. By Lemma \ref{lem:principal-test} and Corollary \ref{cor:GO-criterion-gen}, $U(f)$ is a principal Galois $\Ga$-order in $\SK$.
\end{Example}

We can now prove parts (i) and (ii) of Theorem \ref{thm:I}.

\begin{proof}[Proof of Theorem \ref{thm:I}(i)(ii)]
By Proposition \ref{prp:Galois-Ring-Equivalent-Condition}, Corollary \ref{cor:GO-criterion-gen} and Proposition \ref{prp:GO-Gamma-maximal-commutative}.
\end{proof}

\begin{Remark} \label{rem:dagger}
There is an anti-isomorphism $\dagger:L\ast\SM\to L\ast(\SM^{-1})$, $f^\dagger=f$ for $f\in L$ and $\mu^\dagger=\mu^{-1}$ for $\mu\in\SM$. Then $\dagger$ commutes with the action of $G$ on $\SL$, hence $\SK^\dagger=\big(L\ast(\SM^{-1})\big)^G$. Moreover, $\SU$ is a Galois $\Ga$-ring (respectively $\Ga$-order) in $\SK$ iff $\SU^\dagger$ is a Galois $\Ga$-ring (respectively $\Ga$-order) in $\SK^\dagger$.
However it is not true in general that $(\SK_\Ga)^\dagger=(\SK^\dagger)_\Ga$. Thus this gives an opposite way to produce Galois $\Ga$-orders in $\SK$ as images under $\dagger$ of principal Galois $\Ga$-orders in $\SK^\dagger$.
\end{Remark}

\begin{Definition} \label{def:co-principal}
We call ${}_\Ga\SK\eqdef\big((\SK^\dagger)_\Ga\big)^\dagger\subset \SK$ the \emph{co-standard Galois $\Ga$-order in $\SK$}. 
Equivalently,
\begin{equation}
{}_\Ga\SK=\{X\in\SK\mid X^\dagger(\ga)\in\Ga\;\forall\ga\in\Ga\}.
\end{equation}
If $\SU$ is a Galois $\Ga$-order in $\SK$ contained in ${}_\Ga\SK$ then $\SU$ is a \emph{co-principal Galois $\Ga$-order in $\SK$}.
\end{Definition}

We have the following opposite versions of Corollary \ref{cor:GO-criterion-gen} and Lemma \ref{lem:principal-test}.

\begin{Corollary} \label{cor:anti-criterion}
If $\SU$ is a Galois $\Ga$-ring in $\SK$ and $\SU$ is generated as a ring by a subset $\SX$ such that $X^\dagger(\ga)\in\Ga$ for all $\ga\in\Ga$ and all $X\in\SX$, then $\SU$ is a co-principal Galois $\Ga$-order in $\SK$.
\end{Corollary}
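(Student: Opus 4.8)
The plan is to imitate the two-line proof of Corollary~\ref{cor:GO-criterion-gen}, with the standard Galois order $\SK_\Ga$ replaced throughout by the co-standard Galois order ${}_\Ga\SK$ of Definition~\ref{def:co-principal}. Conceptually this is just ``apply $\dagger$ to everything'': by Remark~\ref{rem:dagger} the anti-isomorphism $\dagger$ turns $\SU$ into a Galois $\Ga$-ring $\SU^\dagger$ in $\SK^\dagger$, generated as a ring by $\SX^\dagger=\{X^\dagger\mid X\in\SX\}$, and the hypothesis $X^\dagger(\ga)\in\Ga$ says exactly that each generator of $\SU^\dagger$ lies in $(\SK^\dagger)_\Ga$; Corollary~\ref{cor:GO-criterion-gen}, read for the triple $(\La,G,\SM^{-1})$, then makes $\SU^\dagger$ a principal Galois $\Ga$-order in $\SK^\dagger$, and applying $\dagger$ back together with Remark~\ref{rem:dagger} gives that $\SU$ is a Galois $\Ga$-order contained in $\big((\SK^\dagger)_\Ga\big)^\dagger={}_\Ga\SK$, i.e.\ a co-principal Galois $\Ga$-order in $\SK$.

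If one prefers to argue directly in $\SK$, I would first observe, using the equivalent description ${}_\Ga\SK=\{X\in\SK\mid X^\dagger(\ga)\in\Ga\;\forall\ga\in\Ga\}$, that the hypothesis means precisely $\SX\subset{}_\Ga\SK$, and that $\Ga\subset{}_\Ga\SK$ as well (since $\dagger$ fixes $L\supset\Ga$, one has $(\ga')^\dagger(\ga)=\ga'\ga\in\Ga$ for $\ga',\ga\in\Ga$). Since ${}_\Ga\SK$ is a subring of $\SK$ — being the $\dagger$-image of the subring $(\SK^\dagger)_\Ga$, or directly because $(XY)^\dagger(\ga)=Y^\dagger\big(X^\dagger(\ga)\big)$ by Lemma~\ref{lem:evaluation} — the ring $\SU$ generated by $\Ga\cup\SX$ satisfies $\SU\subset{}_\Ga\SK$. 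Now ${}_\Ga\SK$ is a Galois $\Ga$-order in $\SK$ (Definition~\ref{def:co-principal}), so $\SU$, being a Galois $\Ga$-ring contained in a Galois $\Ga$-order, is a Galois $\Ga$-order by Lemma~\ref{lem:sub-order}; contained in ${}_\Ga\SK$, it is co-principal by definition.

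The argument is entirely formal once Remark~\ref{rem:dagger} and Definition~\ref{def:co-principal} are in hand, so I do not expect a real obstacle. The one point that deserves a word of care is the assertion that ${}_\Ga\SK$ is a genuine subring and a Galois $\Ga$-order: this is where one implicitly uses that the standing assumptions \eqref{it:G1}--\eqref{it:G3} are preserved when $\SM$ is replaced by $\SM^{-1}$ (so that Theorem~\ref{thm:KG-Galois-order} applies to the opposite setup), which is exactly what makes Remark~\ref{rem:dagger} and the notation $(\SK^\dagger)_\Ga$ legitimate. If one wants to sidestep even this, closure of ${}_\Ga\SK$ under products is precisely the identity $(XY)^\dagger(\ga)=Y^\dagger\big(X^\dagger(\ga)\big)$, and that identity is really the only computation in the proof.
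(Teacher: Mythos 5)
Your proposal is correct and is essentially the paper's intended argument: the paper states this corollary as the ``opposite version'' of Corollary~\ref{cor:GO-criterion-gen}, i.e.\ precisely the transport under $\dagger$ via Remark~\ref{rem:dagger} (equivalently, your direct reformulation that $\SX\cup\Ga\subset{}_\Ga\SK$, which is a subring and a Galois $\Ga$-order, followed by Lemma~\ref{lem:sub-order}). Your closing remark that assumptions \eqref{it:G1}--\eqref{it:G3} persist for $\SM^{-1}$ is exactly the point the paper leaves implicit, so nothing is missing.
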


\begin{Lemma} \label{lem:co-principal-test}
If $\chi:G\to K^\times$ is a linear character such that $\La^G_\chi=\Ga d_\chi$ for some $d_\chi\in \La^G_\chi$ then 
\begin{equation}
(\La\ast\SM)\frac{1}{d_\chi}\cap \SK\subset (\SK_\Ga)^\dagger.
\end{equation}
\end{Lemma}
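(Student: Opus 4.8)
The plan is to reduce the statement to Lemma~\ref{lem:principal-test}, applied not to $\SM$ itself but to the inverse monoid $\SM^{-1}=\{\mu^{-1}\mid\mu\in\SM\}$, and then to transport the result back through the anti-isomorphism $\dagger$ of Remark~\ref{rem:dagger}. In particular the right-hand side of the assertion is the co-standard Galois $\Ga$-order ${}_\Ga\SK=\bigl((\SK^\dagger)_\Ga\bigr)^\dagger$ of Definition~\ref{def:co-principal}; concretely we must show that every $X$ on the left satisfies $X^\dagger(\ga)\in\Ga$ for all $\ga\in\Ga$.

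The first step is to check that $\SM^{-1}$ is again a submonoid of $\Aut(\La)$ satisfying the standing assumptions \eqref{it:G1}--\eqref{it:G3}: \eqref{it:G3} does not mention the monoid; \eqref{it:G2} holds for $\SM^{-1}$ because ${}^g(\mu^{-1})=({}^g\mu)^{-1}$; and \eqref{it:G1} holds because $\SM^{-1}(\SM^{-1})^{-1}=(\SM\SM^{-1})^{-1}$, so an element $g\in G$ lying in this set would have $g^{-1}\in(\SM\SM^{-1})\cap G=1_{\Aut(\La)}$, forcing $g=1_{\Aut(\La)}$. Hence the whole construction of Section~\ref{sec:invariants} applies verbatim with $\SM$ replaced by $\SM^{-1}$: the skew monoid ring is $L\ast\SM^{-1}$, its ring of $G$-invariants is $\SK^\dagger$, and its standard Galois $\Ga$-order is $(\SK^\dagger)_\Ga$. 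Since the hypothesis $\La^G_\chi=\Ga d_\chi$ with $d_\chi\in\La^G_\chi$ refers to neither $\SM$ nor $\SM^{-1}$, Lemma~\ref{lem:principal-test} applied in this setting gives
\[
\SK^\dagger\cap\tfrac{1}{d_\chi}\bigl(\La\ast\SM^{-1}\bigr)\ \subset\ (\SK^\dagger)_\Ga .
\]

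The second step is to move this inclusion across $\dagger$. For $Z=\sum_\mu a_\mu\mu\in\La\ast\SM$ with $a_\mu\in\La$, anti-multiplicativity of $\dagger$ together with $f^\dagger=f$ for $f\in L$ gives $\bigl(Z\tfrac{1}{d_\chi}\bigr)^\dagger=\tfrac{1}{d_\chi}Z^\dagger$ with $Z^\dagger=\sum_\mu\mu^{-1}(a_\mu)\,\mu^{-1}$, which lies in $\La\ast\SM^{-1}$ because $\mu^{-1}(\La)=\La$; running the computation backwards shows $\dagger$ maps $(\La\ast\SM)\tfrac{1}{d_\chi}$ onto $\tfrac{1}{d_\chi}(\La\ast\SM^{-1})$. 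Since $\dagger$ commutes with the $G$-action we also have $\dagger(\SK)=\SK^\dagger$ (Remark~\ref{rem:dagger}), so $\dagger$ carries $(\La\ast\SM)\tfrac{1}{d_\chi}\cap\SK$ onto $\tfrac{1}{d_\chi}(\La\ast\SM^{-1})\cap\SK^\dagger$. Feeding this into the displayed inclusion and applying the involution $\dagger$ once more yields
\[
(\La\ast\SM)\tfrac{1}{d_\chi}\cap\SK\ \subset\ \bigl((\SK^\dagger)_\Ga\bigr)^\dagger={}_\Ga\SK ,
\]
which is the assertion.

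I do not expect a genuine obstacle: the argument is essentially the bookkeeping of a dualization. The two points to handle with care are the verification that $\SM^{-1}$ inherits \eqref{it:G1}--\eqref{it:G3} (so that Lemma~\ref{lem:principal-test} may legitimately be invoked for it) and the fact that $\dagger$, being an \emph{anti}-homomorphism, moves the scalar $\tfrac{1}{d_\chi}$ from the right of a product to the left and replaces $a_\mu\mu$ by $\mu^{-1}(a_\mu)\mu^{-1}$. One could equally avoid the passage to $\SM^{-1}$ and argue directly, imitating the proof of Lemma~\ref{lem:principal-test}: for $X=Z\tfrac{1}{d_\chi}$ and $\ga\in\Ga$ one computes $X^\dagger(\ga)=\tfrac{1}{d_\chi}\sum_\mu\mu^{-1}(a_\mu\ga)\in\tfrac{1}{d_\chi}\La$, while $X\in\SK$ forces $X^\dagger\in\SK^\dagger$ so $X^\dagger(\ga)\in K$ by Lemma~\ref{lem:evaluation}(iii) (applied with $\SM^{-1}$), and $\tfrac{1}{d_\chi}\La\cap K=\tfrac{1}{d_\chi}\La^G_\chi=\Ga$ because $b/d_\chi$ is $G$-invariant precisely when $g(b)=\chi(g)b$ for all $g\in G$, i.e.\ $b\in\La^G_\chi=\Ga d_\chi$.
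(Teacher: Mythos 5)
The paper states this lemma without proof, as the evident $\dagger$-mirror of Lemma \ref{lem:principal-test}, and your argument is exactly that intended dualization (your closing direct computation of $X^\dagger(\ga)\in\frac{1}{d_\chi}\La\cap K=\Ga$ is the mirror of the one-line proof of Lemma \ref{lem:principal-test}), so in substance you prove it the way the paper implicitly does, and the proof is correct. Two remarks. First, your verification of \eqref{it:G1} for $\SM^{-1}$ rests on the set identity $\SM^{-1}(\SM^{-1})^{-1}=(\SM\SM^{-1})^{-1}$, which amounts to $\SM^{-1}\SM=\SM\SM^{-1}$ and is false for a general noncommutative submonoid of $\Aut(\La)$; the conclusion still holds, but the correct argument uses \eqref{it:G2}: if $g=\mu^{-1}\nu\in G$ with $\mu,\nu\in\SM$, then $\beta={}^{g^{-1}}\mu\in\SM$ and $\nu\beta^{-1}=\mu g\,g^{-1}\mu^{-1}g=g$, so $g\in(\SM\SM^{-1})\cap G=1_{\Aut(\La)}$. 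This slip is harmless here because the proof of Lemma \ref{lem:principal-test} (via Lemma \ref{lem:evaluation}(iii)) only needs the $G$-action on $L\ast\SM^{-1}$, i.e. \eqref{it:G2}, and in all of the paper's examples $\SM$ is abelian; your self-contained direct argument in the last paragraph avoids the issue entirely. Second, you read the right-hand side as ${}_\Ga\SK=\big((\SK^\dagger)_\Ga\big)^\dagger$ rather than literally $(\SK_\Ga)^\dagger$; that is the sensible reading: the two coincide when $\SM$ is a group (which covers the application in Theorem \ref{thm:qOGZ}), and for a general monoid only your version makes the stated inclusion meaningful, since $(\SK_\Ga)^\dagger$ sits inside $L\ast\SM^{-1}$ while the left-hand side sits inside $L\ast\SM$.
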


\section{Gelfand-Zeitlin modules} \label{sec:GZ-modules}

In \cite{EarMazVis2017} the authors constructed canonical simple Gelfand-Zeitlin modules was defined in the setting of orthogonal Gelfand-Zeitlin algebras. In this section we generalize this to an arbitrary principal Galois $\Ga$-order $\SU$ in $\SK$. In Section \ref{sec:OGZ} we show that orthogonal Gelfand-Zeitlin algebras are examples of principal Galois $\Ga$-orders.

In addition to Assumptions \eqref{it:G1}--\eqref{it:G3}, in this section we assume that:
\begin{align}
\tag{A4} \label{it:G4} &\text{$\La$ is finitely generated over an algebraically closed field $\K$ of characteristic zero,} \\ 
\tag{A5} \label{it:G5} &\text{$G$ and $\SM$ act by $\K$-algebra automorphisms on $\La$.}
\end{align}
Let $\hat\Ga$ be the set of all $\Ga$-characters, i.e. $\K$-algebra homomorphisms $\xi:\Ga\to\K$.

\begin{Definition}
Let $\SU$ be a Galois $\Ga$-ring in $\SK$. A left $\SU$-module $M$ is said to be a \emph{Gelfand-Zeitlin module (with respect to $\Ga$)} if $\Ga$ acts locally finitely on $M$. Equivalently,
\[M=\bigoplus_{\xi\in\hat{\Ga}} M_\xi, \qquad M_\xi=\{v\in M\mid (\ker \xi)^Nv=0, N\gg 0\}.\]
Symmetrically one defines the notion of a right Gelfand-Zeitlin module.
\end{Definition}

The following is standard, see e.g. \cite{DroFutOvs1994}.
We provide some details for the convenience of the reader.

\begin{Lemma} \label{lem:HC-GZ}
Let $\SU$ be a Galois $\Ga$-ring in $\SK$.
\begin{enumerate}[{\rm (i)}]
\item Any submodule and any quotient of a Gelfand-Zeitlin module is a Gelfand-Tsetlin module.
\item Any $\SU$-module generated by generalized weight vectors is a Gelfand-Zeitlin module.
\end{enumerate}
\end{Lemma}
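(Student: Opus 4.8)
The plan is to prove both parts directly from the definitions, using the $\Ga$-character space decomposition and the Harish-Chandra property of $\Ga$ in $\SU$, which holds by Proposition \ref{prp:HC} and Lemma \ref{lem:HC-subrings} since $\Ga\subset\SU\subset\SL$.

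For part (i), let $M$ be a Gelfand-Zeitlin $\SU$-module, so $M=\bigoplus_{\xi\in\hat\Ga}M_\xi$ where $M_\xi$ consists of vectors killed by a power of $\ker\xi$. If $N\subset M$ is a submodule, then since $\Ga$ acts locally finitely on $M$ it acts locally finitely on $N$; more precisely, for $v\in N\subset M$ we may write $v=\sum_i v_i$ with $v_i\in M_{\xi_i}$ for finitely many distinct $\xi_i$, and a standard argument (the $M_{\xi_i}$ are the isotypic components for the commuting action of $\Ga$, and projection onto each is a polynomial in the $\Ga$-action by the Chinese Remainder Theorem applied to a large power of $\prod_i\ker\xi_i$) shows each $v_i\in N$. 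Hence $N=\bigoplus_\xi N_\xi$ with $N_\xi=N\cap M_\xi$, so $N$ is a Gelfand-Zeitlin module. For the quotient $M/N$, the image of each $M_\xi$ is contained in $(M/N)_\xi$ and these images span, so $M/N=\sum_\xi (M/N)_\xi$; and since each nonzero element of $(M/N)_\xi$ has support only at $\xi$, the sum is direct. Thus $M/N$ is a Gelfand-Zeitlin module as well. (The statement in the lemma says ``Gelfand-Tsetlin module,'' which is just an alternate spelling of Gelfand-Zeitlin module.)

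For part (ii), suppose $M$ is generated as an $\SU$-module by a set $S$ of generalized weight vectors, i.e. each $s\in S$ lies in some $M_{\xi_s}$ (in the sense of being annihilated by a power of $\ker\xi_s$). Since $\Ga$ is a Harish-Chandra subring of $\SU$, for each $x\in\SU$ the bimodule $\Ga x\Ga$ is finitely generated as a right $\Ga$-module, say by $x_1,\ldots,x_m$. Then for $s\in M_{\xi}$, the space $\Ga x s$ is contained in $\sum_j x_j(\Ga s)$, and $\Ga s$ is finite-dimensional (being contained in a single generalized weight space where $\ker\xi$ acts nilpotently of bounded order), so $\Ga x s$ is finite-dimensional. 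Iterating, any element of $M=\sum \SU s$ lies in a finite-dimensional $\Ga$-submodule, hence $\Ga$ acts locally finitely on $M$; equivalently $M=\bigoplus_\xi M_\xi$.

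The main obstacle — really the only nontrivial point — is the decomposition argument in part (ii): one must check that the Harish-Chandra condition genuinely forces finite-dimensionality of $\Ga\cdot(xs)$ starting from local finiteness at $s$. The key observation making this work is that a generalized weight vector spans a finite-dimensional $\Ga$-module, that $\Ga x\Ga$ being finitely generated on the right lets one pull the $x$ through a finite list of module generators, and that a finite sum of finite-dimensional $\Ga$-modules on which $\Ga$ acts by generalized characters is again such a module. I would spell this out carefully and refer to \cite{DroFutOvs1994} for the original treatment; everything else is routine bookkeeping with the isotypic decomposition.
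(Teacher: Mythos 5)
Your proof is correct, and for the substantive part (ii) it is essentially the paper's own argument: use the Harish-Chandra property of $\Ga$ in $\SU$ to write $\Ga x\Ga=\sum_j x_j\Ga$ as a right $\Ga$-module, bound $\Ga x s\subset\sum_j x_j\Ga s$ with $\Ga s$ finite-dimensional because $s$ is a generalized weight vector, and conclude local finiteness on the (sum of) cyclic submodules. For part (i) the paper simply notes that local finiteness of the $\Ga$-action, which is the definition of a Gelfand-Zeitlin module, passes trivially to submodules and quotients, so your isotypic-component/CRT argument, while valid, is more machinery than needed.
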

\begin{proof}
(i): Obvious for submodules. If $\Ga$ acts locally finite on $v\in M$ then it does so on $v+N\in M/N$ for any submodule $N$.

\smallskip\noindent
(ii): It suffices to prove this for a left cyclic $\SU$-module, say $M=\SU v$. Let $u\in \SU$. Since $\Ga$ is a Harish-Chandra subalgebra of $\SU$, $\Ga u\Ga$ is finitely generated as a right $\Ga$-module. So $\Ga u \Ga = \sum_{i=1}^n \ga_i u \Ga$ for some $\ga_i\in \Ga$. Hence $\Ga u v \subset\Ga u \Ga v \subset \sum_{i=1}^n \ga_i u \Ga v$ which is finite-dimensional over $\K$. The proof for right modules is symmetric.
\end{proof}

Let $\Ga^\ast$ be the set of all $\K$-linear maps from $\Ga$ to $\K$. Note that $\Ga^\ast$ is a right $\SK_\Ga$-module with respect to the action $\tau X\eqdef\tau\circ X$ for $\tau\in \Ga^\ast$ and $X\in \SK_\Ga$, where $X$ is viewed as a function from $\Ga$ to $\Ga$ via evaluation. Similarly, $\Ga^\ast$ is a left ${}_\Ga\SK$-module via $X\tau=\tau\circ(X^\dagger)$. Thus by restriction, $\Ga^\ast$ is a right (left) $\SU$-module for any (co-)principal Galois $\Ga$-order $\SU$ in $\SK$.
We now prove the main theorem of this section, which in particular proves Theorem \ref{thm:I}(iii).

\begin{Theorem} \label{thm:principal-nonempty-fiber}
Let $\xi\in\hat{\Ga}$ be any character. 
\begin{enumerate}[{\rm (i)}]
\item If $\SU$ is a principal Galois $\Ga$-order in $\SK$, then the right cyclic $\SU$-module $\xi\SU$ has a unique simple quotient $V(\xi)$. Moreover, $V(\xi)$ is a Gelfand-Zeitlin module over $\SU$ with $V(\xi)_\xi\neq 0$.
\item If $\SU$ is a co-principal Galois $\Ga$-order in $\SK$, then the left cyclic $\SU$-module $\SU\xi$ has a unique simple quotient $V'(\xi)$. Moreover, $V'(\xi)$ is a Gelfand-Zeitlin module over $\SU$ with $V'(\xi)_\xi\neq 0$.
\end{enumerate}
\end{Theorem}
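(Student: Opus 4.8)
The plan is to prove (i) and deduce (ii) from it via the anti-isomorphism $\dagger$ of Remark~\ref{rem:dagger}. Since $\SU$ co-principal means $\SU^\dagger$ is a principal Galois $\Ga$-order in $\SK^\dagger$ (Definition~\ref{def:co-principal} and Remark~\ref{rem:dagger}), and since the left $\SU$-action of $u$ on $\Ga^\ast$ coincides with the right $\SU^\dagger$-action of $u^\dagger$ (both send $\tau$ to $\tau\circ u^\dagger$), the left cyclic module $\SU\xi$ is identified with the right cyclic module $\xi\SU^\dagger$ together with its submodule lattice and weight spaces; so (ii) will follow from (i) applied to $\SU^\dagger$ in $\SK^\dagger$. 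Hence I will assume $\SU$ is principal, so $\SU\subseteq\SK_\Ga$ by Corollary~\ref{cor:GO-criterion-gen}, and $\Ga^\ast$ is a right $\SU$-module via $\tau\cdot u=\tau\circ u$, with $u$ regarded as the map $\Ga\to\Ga$, $\ga\mapsto u(\ga)$.

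Two things are immediate and I would record them first. The element $\xi\in\Ga^\ast$ is a weight vector of weight $\xi$, since $\xi\cdot\ga=\xi(\ga)\,\xi$ shows $\ker\xi$ kills $\xi$; hence $\xi\SU$ is generated by a weight vector, so it is a Gelfand--Zeitlin module by Lemma~\ref{lem:HC-GZ}(ii), whence $\xi\SU=\bigoplus_{\eta\in\hat\Ga}(\xi\SU)_\eta$ and every quotient of $\xi\SU$ is Gelfand--Zeitlin by Lemma~\ref{lem:HC-GZ}(i). Second, once the claim $(\xi\SU)_\xi=\K\xi$ below is established, the theorem will follow by the usual argument: a proper submodule $W\subsetneq\xi\SU$ is Gelfand--Zeitlin, so $W_\xi\subseteq(\xi\SU)_\xi=\K\xi$, and $W_\xi=\K\xi$ would give $\xi\in W$ and hence $W=\xi\SU$; so $W_\xi=0$ for every proper $W$, thus $W_{\max}:=\sum_{W\subsetneq\xi\SU}W$ has $(W_{\max})_\xi=0$, in particular $\xi\notin W_{\max}$, so $W_{\max}$ is the unique maximal submodule, $V(\xi):=\xi\SU/W_{\max}$ is the unique simple quotient, it is Gelfand--Zeitlin, and the nonzero image of $\xi$ in $V(\xi)$ lies in $V(\xi)_\xi$.

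The core of the proof is the claim $(\xi\SU)_\xi=\K\xi$, where only $\subseteq$ needs work. I would take $u\in\SU$ with $v:=\xi\cdot u$ supported at $\mathfrak m:=\ker\xi$, say $v(\mathfrak m^N)=0$, and reduce to showing $v(\mathfrak m)=0$, since then $v$ factors through $\Ga/\mathfrak m\cong\K$ and $v\in\K\xi$. The plan is to evaluate $v$ through a linear recursion. Write $u=\sum_{\mu\in\SM}a_\mu\mu$ with $a_\mu\in L$, and recall $u(\ga)\in\Ga$ for all $\ga\in\Ga$. Fix $\ga\in\mathfrak m$; since $s_k:=u(\ga^k)=\sum_\mu a_\mu\,\mu(\ga)^k\in\Ga$, the sequence $(s_k)_{k\ge0}$ satisfies the linear recursion with characteristic polynomial $\prod_{\mu\in\Supp_\SM(u)}\!\bigl(T-\mu(\ga)\bigr)$, whose coefficients lie in $\Ga=\La^G$ because $u$ is $G$-invariant and $\ga\in\Ga$, so $G$ permutes $\{\mu(\ga):\mu\in\Supp_\SM(u)\}$ along its permutation of $\Supp_\SM(u)$. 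Applying $\xi$ gives a scalar linear recursion satisfied by $\bar s_k:=v(\ga^k)\in\K$, with characteristic polynomial $\prod_\mu\bigl(T-\xi_\mu(\ga)\bigr)$, where $\xi_\mu(\ga)$ is the value of $\mu(\ga)\in\La$ at a fixed maximal ideal of $\La$ lying over $\mathfrak m$. The crucial point to establish is that $v$ cannot be a higher-order distribution at $\xi$: concretely, that the root $T=0$ of $\prod_\mu(T-\xi_\mu(\ga))$ has multiplicity at most $1$ (and equal to $1$ only through $\mu=1_{\Aut(\La)}$), because — using the separation axiom~\eqref{it:G1} and the fact that $\SM$ is a monoid — no element of $\SM$ other than the identity can carry the fibre of $\Specm\La\to\Specm\Ga$ over $\xi$ back into itself. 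Granting this, $T=0$ is a simple or absent root, the remaining factor of the recursion is invertible, and running it backwards from $\bar s_k=0$ for $k\ge N$ forces $\bar s_k=0$ for all $k\ge1$; in particular $v(\ga)=\bar s_1=0$. Finally, $\{\ga\in\mathfrak m:v(\ga)=0\}$ is a linear subspace of $\mathfrak m$ and, since $\Ga$ is a finitely generated $\K$-algebra, I have just seen it contains a Zariski-dense subset of $\mathfrak m$ (the $\ga$ for which the nonzero $\xi_\mu(\ga)$ stay nonzero), hence it is all of $\mathfrak m$, i.e. $v(\mathfrak m)=0$.

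The hard part is exactly the multiplicity bound inside the claim — equivalently, showing $v=\xi\cdot u$ has no higher-order part at $\xi$. Ruling out a nontrivial $\mu\in\SM$ stabilising, even partially, the fibre over $\xi$ is where the separation axiom~\eqref{it:G1} and the monoid (not group) structure of $\SM$ must be used in an essential way; everything else — the recursion bookkeeping and the passage to generic $\ga$ — is routine.
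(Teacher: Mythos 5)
Your skeleton — reduce (ii) to (i) via $\dagger$, show every proper submodule of $\xi\SU$ meets the generalized weight space at $\xi$ trivially, then pass to the sum of all proper submodules — is the paper's, and that part is fine. The genuine gap is the claim you make the core of the argument: $(\xi\SU)_\xi=\K\xi$ is strictly stronger than what the theorem needs, and it is \emph{false} in general; correspondingly, the ``crucial point'' you flag and leave unproved (that no nontrivial $\mu\in\SM$, even after twisting by $G$, carries the fibre of $\Specm\La\to\Specm\Ga$ over $\Fm$ back into itself) fails exactly at singular characters. Assumption \eqref{it:G1} only forbids a nontrivial element of $\SM\SM^{-1}$ from being \emph{globally equal} to an element of $G$; it does not prevent $\mu g$ from fixing a maximal ideal of $\La$ lying over $\Fm$. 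Concretely, take $\La=\C[x_1,x_2]$, $G=S_2$, $\SM=\Z^2$ acting by integer shifts, $\SU=\SK_\Ga$, and $\xi$ the character of $\Ga$ given by evaluation at the orbit of $(a,a+1)$; here $\mu_1\mu_2^{-1}$ composed with the transposition fixes the point. The element $u=\tfrac{1}{x_1-x_2+1}\,\mu_1\mu_2^{-1}+\tfrac{1}{x_2-x_1+1}\,\mu_2\mu_1^{-1}-\tfrac{1}{x_1-x_2+1}-\tfrac{1}{x_2-x_1+1}$ lies in $\SK_\Ga$, since $u(\ga)$ is a sum of two difference quotients whose numerators vanish on the relevant lines by symmetry of $\ga$. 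Evaluating, the $\xi$-generalized component of $\xi\cdot u$ is $\ga\mapsto(\partial_2\ga-\partial_1\ga)(a,a+1)-\tfrac12\ga(a,a+1)$, which kills $\Fm^2$ but not $\Fm$ (test it on $\ga=x_1x_2-a(a+1)$). So $(\xi\SK_\Ga)_\xi\supsetneq\K\xi$. This is precisely the ``derivative tableau'' phenomenon at singular characters known from the $\Fgl_n$ literature cited in the introduction. In your recursion bookkeeping the failure shows up as the eigenvalue $0$ occurring with multiplicity at least $2$ for \emph{every} $\ga\in\Fm$ (because $\mu^{-1}(\tilde\Fm)$ again lies over $\Fm$), so the backward induction never starts and no density argument can rescue it.

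What the theorem actually needs, and what the paper proves, is much weaker: the space of honest $\xi$-eigenvectors in all of $\Ga^\ast$ is $\K\xi$, by the adjunction $\Hom_\Ga(\Ga/\Fm,\Ga^\ast)\cong\Hom_\K(\Ga/\Fm\otimes_\Ga\Ga,\K)\cong\K$. This suffices: if $W\subseteq\xi\SU$ is a submodule with $W_\xi\neq 0$, choose $0\neq v\in W_\xi$ and $N$ minimal with $v\cdot\Fm^N=0$; then $v\cdot\Fm^{N-1}$ contains a nonzero honest eigenvector of weight $\xi$, which must be a scalar multiple of $\xi$, forcing $\xi\in W$ and $W=\xi\SU$. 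With this replacement, the rest of your write-up (GZ property of $\xi\SU$ and its quotients via Lemma \ref{lem:HC-GZ}, the sum of proper submodules, uniqueness of the simple quotient, $V(\xi)_\xi\neq 0$, and the $\dagger$-reduction of (ii) to (i)) goes through and coincides with the paper's proof.
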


\begin{proof}
(i): Put $M(\xi)=\xi\SU$. Since $\xi$ is a weight vector of weight $\xi$, $M(\xi)$ is a right Gelfand-Zeitlin $\SU$-module by Lemma \ref{lem:HC-GZ}.
Let $\Fm=\ker\xi$. As in \cite[Prop.~5]{EarMazVis2017} we have
\[\Hom_{\Ga}(\Ga/\Fm,\Ga^\ast) \cong \Hom_\K(\Ga/\Fm\otimes_{\Ga}\Ga,\K)\cong \K\]
since $\Ga/\Fm\cong\K$. Thus $\xi$ is the unique (up to scalar) weight vector in $\Ga^\ast$ of weight $\xi$. Thus every proper submodule has zero intersection with the generalized weight space $M(\xi)_\xi$. Therefore, by Lemma \ref{lem:HC-GZ}, the sum $N(\xi)$ of all proper submodules is itself a proper submodule and $V(\xi)=M(\xi)/N(\xi)$ is a Gelfand-Zeitlin module  with $V(\xi)_\xi\neq 0$ since $\xi\notin N(\xi)$.

\smallskip\noindent
(ii): The proof is symmetric.
\end{proof}

\begin{Definition}
Let $\xi\in\hat{\Ga}$, $\SU$ (respectively $\SU'$) be a principal (respectively co-principal) Galois $\Ga$-order in $\SK$. Then
\begin{enumerate}[{\rm (i)}]
\item $V(\xi)$ is the \emph{canonical simple right Gelfand-Zeitlin $\SU$-module} associated to $\xi$,
\item $V'(\xi)$ is the \emph{canonical simple left Gelfand-Zeitlin $\SU'$-module} associated to $\xi$. 
\end{enumerate}
\end{Definition}

\section{Rational Galois orders} \label{sec:rat-GO}

In this section we introduce a general construction of a large class of principal Galois orders we call \emph{rational Galois orders}. This class naturally includes $U(\Fgl_n)$, level $p$ Yangians $Y_p(\Fgl_n)$, type $A$ finite W-algebras $W(\pi)$, OGZ algebras $U(\boldsymbol{r})$ as well as their respective parabolic subalgebras.

\subsection{Construction from finite reflection groups}

Consider a finite reflection group $G\subset\GL(V)$, where $V$ is a vector space over a field $\K$ of characteristic not dividing $|G|$. By a \emph{reflection group} \cite{Broue,Terao1989} we mean a group generated by pseudo-reflections, i.e. linear transformations $g$ with $\op{codim}\ker(1-g)=1$. 
Let $\La=S=S(V^\ast)$ be the algebra of polynomial functions on $V$, and
 $\Ga=R=S^G$ be the subring of $G$-invariants. Let $L=\Frac S$ and $K=\Frac R$ be the respective fields of fractions. Let $V$ act on $S$ by translations, $t_w(p)(v)=p(v-w)$ for $v,w\in V,\, p\in S$. Let $S\ast V$ be the skew group algebra defined as the free left $S$-module on the set $\{t_v\}_{v\in V}$ with multiplication $pt_v\cdot qt_w = (pt_v(q))t_{v+w}$. Since $g\circ t_v\circ g^{-1}= t_{g(v)}$, the group $G$ acts on $S\ast V$ by $\K$-algebra automorphisms. Let $\hat{G}$ be the group of linear characters $\chi:G\to\K^\times$. 
For $\chi\in\hat{G}$, let $S^G_\chi=\{f\in S\mid g(f)=\chi(g)f\;\forall g\in G\}$ be the subring of \emph{relative invariants} and put
\begin{equation}
d_\chi = \prod_{H\in \SA(G)} (\al_H)^{a_H}
\end{equation}
where $\SA(G)=\{\ker(1-g)\mid g\in G\}$ is the set of reflecting hyperplanes in $V$, $\al_H\in V^\ast$ is a choice of linear form with $\ker \al_H=H$, and $a_H$ is the least non-negative integer with $\chi(s_H)=\big(\det(s_H^\ast)\big)^{a_H}$, where $s_H$ is a fixed generator of the stabilizer of $H$ in $G$. The key result we need is the following theorem from \cite{Terao1989}.

\begin{Theorem}[{\cite[Thm.~2.5]{Terao1989}}] \label{thm:Terao}
For any $\chi\in\hat{G}$ we have $S^G_\chi = R d_\chi$.
\end{Theorem}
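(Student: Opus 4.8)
The theorem to prove is Terao's result (Theorem \ref{thm:Terao}), namely $S^G_\chi = R\,d_\chi$ for every linear character $\chi\in\hat G$, where $R = S^G$ and $d_\chi = \prod_{H\in\SA(G)}(\al_H)^{a_H}$. Since the statement is quoted from \cite{Terao1989}, in a companion paper one would simply cite it; but to give a self-contained argument I would proceed as follows. First I would verify the easy inclusion $R\,d_\chi \subseteq S^G_\chi$. Because each $s_H$ permutes the hyperplanes in $\SA(G)$ and fixes $H$, and more precisely $g(\al_H)$ is a scalar multiple of $\al_{g(H)}$ for all $g\in G$, one computes $g(d_\chi) = \chi(g)\,d_\chi$ directly from the multiplicativity of the characters: the exponents $a_H$ are $G$-constant on each $G$-orbit of hyperplanes (conjugate hyperplanes have conjugate stabilizer generators), so $d_\chi$ transforms by the character $g\mapsto\prod_H (\det g^\ast|_{\text{lines}})^{a_H}$, which unwinds to $\chi(g)$ by the defining property $\chi(s_H)=(\det s_H^\ast)^{a_H}$ and the fact that the $s_H$ generate $G$. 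Consequently, for $r\in R$ we get $g(r\,d_\chi)=r\,\chi(g)\,d_\chi$, so $r\,d_\chi\in S^G_\chi$.

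**The reverse inclusion.** This is the substantive direction. Let $f\in S^G_\chi$. The strategy is a divisibility argument hyperplane by hyperplane. Fix $H\in\SA(G)$ with stabilizer generated by $s_H$ of order $e_H=|\mathrm{Stab}_G(H)|$; then $s_H$ acts on $S$ and its eigenvalues on $V^\ast$ are $1$ (with multiplicity $\dim V - 1$) and a primitive $e_H$-th root of unity $\zeta$ on the line spanned by $\al_H$. The key local computation: if $g(f)=\chi(g)f$ then in particular $s_H(f) = \chi(s_H) f = \zeta^{a_H} f$, where $a_H$ is chosen as in the statement. Working in the localization at the prime ideal $(\al_H)$, or equivalently expanding $f$ as a polynomial in $\al_H$ with coefficients in a $s_H$-invariant complement, the eigenvalue condition forces every monomial $\al_H^k(\cdots)$ appearing in $f$ to satisfy $\zeta^k = \zeta^{a_H}$, i.e. $k\equiv a_H\pmod{e_H}$; since $k\ge 0$ this gives $k\ge a_H$, so $\al_H^{a_H}\mid f$. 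Running this over all $H\in\SA(G)$ and using that distinct reflecting hyperplanes give coprime linear forms $\al_H$ (hence the $(\al_H)^{a_H}$ are pairwise coprime in the UFD $S$), we conclude $d_\chi = \prod_H \al_H^{a_H}$ divides $f$ in $S$. Write $f = d_\chi\, h$ with $h\in S$. Finally, since both $f$ and $d_\chi$ transform by $\chi$ under $G$, for every $g\in G$ we have $\chi(g) d_\chi h = g(f) = g(d_\chi) g(h) = \chi(g) d_\chi\, g(h)$, and cancelling $\chi(g)d_\chi$ (valid as $S$ is a domain) yields $g(h)=h$, so $h\in R$. Hence $f\in R\,d_\chi$, completing the proof.

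**Main obstacle.** The delicate point is the local divisibility step: justifying that the $s_H$-eigenvalue of $f$ forces the precise vanishing order $a_H$ along $H$. This requires knowing that $\mathrm{Stab}_G(H)$ is cyclic (so that it is genuinely generated by a single pseudo-reflection $s_H$ with a well-defined determinant), which is a standard fact for finite reflection groups acting on a hyperplane complement, together with care that the action of $s_H$ on $S$ really does diagonalize with the line $\K\al_H$ as its only non-trivial eigenspace — this uses $\mathrm{char}\,\K\nmid |G|$ (Maschke) to diagonalize $s_H$ on $V^\ast$. Once one has that clean normal form of $s_H$, the exponent bookkeeping modulo $e_H$ is routine, as is the patching over the finitely many hyperplanes via unique factorization. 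For the purposes of this paper, where the result is invoked only as an external input, I would in fact just cite \cite[Thm.~2.5]{Terao1989} and record the easy inclusion $R\,d_\chi\subseteq S^G_\chi$ as the sanity check needed downstream in Definition \ref{def:rational-GO}.
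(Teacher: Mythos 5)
The paper itself gives no argument here --- the statement is imported verbatim from Terao (Stanley's theorem over $\C$), and the citation is the ``proof'' --- so the comparison is with the standard argument, which your sketch reproduces correctly: diagonalizing $s_H$ on $V^\ast$ (using $\operatorname{char}\K\nmid|G|$, and the standard fact that the pointwise stabilizer of $H$ is cyclic generated by a pseudo-reflection), reading off that every exponent of $\al_H$ in $f$ is $\equiv a_H \pmod{e_H}$ and hence $\ge a_H$ (here one should note $a_H<e_H$, immediate from minimality), then using pairwise non-associate linear forms in the UFD $S$ to get $d_\chi\mid f$, and cancelling to see the quotient is $G$-invariant. The one point you gloss is the ``easy'' inclusion, i.e.\ that $d_\chi$ transforms by exactly $\chi$: orbit-constancy of $a_H$ (which does hold, and is independent of the choice of generator $s_H$) plus ``the $s_H$ generate $G$'' only tells you $g(d_\chi)=\la(g)d_\chi$ for \emph{some} linear character $\la$; to identify $\la(s_H)$ with $\det(s_H^\ast)^{a_H}=\chi(s_H)$ you must still check that $s_H$ multiplies the complementary factor $\prod_{H'\neq H}\al_{H'}^{a_{H'}}$ by $1$ and not by another scalar. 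This is true and standard --- e.g.\ restrict to $H$: since $s_H$ fixes $H$ pointwise the restriction of that product is unchanged, and it is nonzero because no $\al_{H'}$ with $H'\neq H$ vanishes on $H$, forcing the scalar to be $1$ --- but your phrase ``unwinds to $\chi(g)$'' hides exactly this computation, which is also what legitimizes your final cancellation $g(h)=h$. With that small lemma inserted (or, as you and the paper both do, with the result simply cited from \cite{Terao1989}), the argument is complete.
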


This allows us to construct principal Galois orders as follows.

\begin{Theorem} \label{thm:rational-Galois-order}
Let $G\subset \GL(V)$ be a finite reflection group and $\SX$ be a subset of $L\ast V$ such that:
\begin{enumerate}[{\rm (i)}]
\item \label{it:RGOi} $g(X)=X$ for all $g\in G$ and all $X\in \SX$,
\item \label{it:RGOii} for all $X\in \SX$ there exists $\chi\in\hat{G}$ such that $d_\chi\cdot X\in S\ast V$ (respectively $X\cdot d_\chi\in S\ast V$).
\end{enumerate}
Then the subring $\SU(G,\SX)$ of $L\ast V$ generated by $R\cup \SX$ is a principal (respectively co-principal) Galois $R$-order in $(L\ast\SM)^G$ where $\SM\subset V$ is the submonoid generated by $\cup_{X\in \SX}\Supp_{\SM}(X)$.
\end{Theorem}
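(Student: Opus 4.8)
The plan is to verify the two defining properties of a (co-)principal Galois order in the sense of Corollary \ref{cor:GO-criterion-gen} (respectively Corollary \ref{cor:anti-criterion}): first that $\SU(G,\SX)$ is a Galois $R$-ring in $(L\ast\SM)^G$, and second that it is contained in the standard (respectively co-standard) Galois $R$-order. Throughout one identifies the data of Section \ref{sec:rat-GO} with that of Section \ref{sec:invariants}: take $\Lambda = S$, the finite subgroup $G\subset\Aut(S)$, and the submonoid $\SM\subset V\subset\Aut(S)$ acting by translations. I would first record that Assumptions \eqref{it:G1}--\eqref{it:G3} hold in this situation: \eqref{it:G1} because $V$ is torsion-free so $(\SM\SM^{-1})\cap G\subset V\cap G = \{1\}$; \eqref{it:G2} because $G$ normalizes the group of translations, ${}^g t_v = t_{g(v)}\in\SM$ since $g$ permutes $\SA(G)$ hence preserves the generating set of $\SM$ — actually one only needs ${}^g t_v\in V$, which is automatic, but invariance under $G$ of the generating set (from property \eqref{it:RGOi} and the fact that $G$ acts on supports) gives ${}^g\SM = \SM$; and \eqref{it:G3} because $S$ is a finitely generated module over $S^G = R$ by the Chevalley--Shephard--Todd theorem (or more elementarily, $S$ is integral over $R$ and finitely generated as an algebra over $\K$). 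With these in place, $\Ga = R$, $K = \Frac R$, and $\SK = (L\ast V)^G = (L\ast\SM)^G$ once one checks that $G$-invariant elements of $L\ast V$ have support inside $\SM$, which follows because the support of each generator $X\in\SX$ lies in $\SM$ by definition and products and $G$-orbits stay inside the submonoid generated by those supports.

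Next I would establish that $\SU(G,\SX)$ is a Galois $R$-ring in $\SK$. By property \eqref{it:RGOi} each $X\in\SX$ lies in $\SL^G = \SK$, so $\SU(G,\SX)\subset\SK$ is genuinely an $R$-subring. Proposition \ref{prp:Galois-Ring-Equivalent-Condition} then applies verbatim: $\SU(G,\SX)$ is generated by $R\cup\SX$, and by the definition of $\SM$ the set $\cup_{X\in\SX}\Supp_\SM(X)$ generates $\SM$ as a monoid, so $\SU(G,\SX)$ is a Galois $R$-ring in $\SK$.

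It remains to show the containment in the standard Galois order, which is where Terao's theorem enters. I treat the principal case; the co-principal case is symmetric via the anti-isomorphism $\dagger$ of Remark \ref{rem:dagger} and Corollary \ref{cor:anti-criterion}. By Corollary \ref{cor:GO-criterion-gen} it suffices to show $X(\ga)\in R$ for every generator $X\in\SX$ and every $\ga\in R$. Fix $X\in\SX$ and let $\chi\in\hat G$ be as in property \eqref{it:RGOii}, so $d_\chi\cdot X\in S\ast V$. By Theorem \ref{thm:Terao}, $S^G_\chi = R\, d_\chi$, i.e. $d_\chi$ is a generator of the cyclic $R$-module of $\chi$-relative invariants; this is exactly the hypothesis of Lemma \ref{lem:principal-test}. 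Hence $\SK\cap\tfrac{1}{d_\chi}(S\ast\SM)\subset\SK_R$. Since $X\in\SK$ and $d_\chi X\in S\ast V$, and $X$ has support in $\SM$, we get $X\in\SK\cap\tfrac{1}{d_\chi}(S\ast\SM)$, so $X\in\SK_R$, i.e. $X(\ga)\in R$ for all $\ga\in R$. Therefore $\SU(G,\SX)\subset\SK_R$ and Corollary \ref{cor:GO-criterion} (equivalently Corollary \ref{cor:GO-criterion-gen}) shows it is a principal Galois $R$-order in $\SK$.

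The only genuinely nontrivial input is Theorem \ref{thm:Terao} of Terao, which guarantees that the module of relative invariants is free of rank one with the explicit generator $d_\chi$; everything else is a matter of matching definitions and invoking the already-proven Proposition \ref{prp:Galois-Ring-Equivalent-Condition}, Lemma \ref{lem:principal-test}, and Corollary \ref{cor:GO-criterion-gen}. The main point to be careful about is the bookkeeping that $\SM$ is a well-defined submonoid for which \eqref{it:G1}--\eqref{it:G3} hold and that $(L\ast\SM)^G$ coincides with the invariants taken in $L\ast V$; I expect the verification of \eqref{it:G2}, namely that $G$ preserves $\SM$, to require the observation that $G$ permutes the supports of the $X\in\SX$ — this follows from $g(X)=X$ together with the fact that $g$ acts on $L\ast V$ compatibly with the conjugation action on $V$, so $g$ permutes $\Supp_\SM(X)={}^g\big(\Supp_\SM(X)\big)$ as a whole.
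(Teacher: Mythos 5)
Your proof is correct and follows the paper's argument essentially verbatim: verify Assumptions \eqref{it:G1}--\eqref{it:G3}, apply Proposition \ref{prp:Galois-Ring-Equivalent-Condition} to get a Galois $R$-ring, and combine Theorem \ref{thm:Terao} with Lemma \ref{lem:principal-test} and Corollary \ref{cor:GO-criterion-gen} (respectively Corollary \ref{cor:anti-criterion} and Lemma \ref{lem:co-principal-test}) to land inside the (co-)standard Galois order. One small slip worth noting but harmless: the asserted equality $(L\ast V)^G=(L\ast\SM)^G$ is false in general, yet it is never actually used --- all that is needed, and what you do verify, is that the generators $R\cup\SX$ (hence $\SU(G,\SX)$) lie in $(L\ast\SM)^G$, since each $X\in\SX$ is $G$-invariant with support in $\SM$ by construction.
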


\begin{proof}
Properties \eqref{it:G1}--\eqref{it:G3} are straightforward to verify. Put $\SK=(L\ast\SM)^G$. By Theorem \ref{prp:Galois-Ring-Equivalent-Condition}, $\SU(G,\SX)$ is a Galois $R$-ring in $\SK$. That $\SU(G,\SX)\subset \SK_R$ follows from Theorem \ref{thm:Terao} and Lemma \ref{lem:principal-test}.
By Corollary \ref{cor:GO-criterion-gen}, $\SU(G,\SX)$ is a principal Galois $R$-order in $\SK$.
The opposite case follows from Corollary \ref{cor:anti-criterion}, Lemma \ref{lem:co-principal-test}.
\end{proof}

\begin{Definition} \label{def:rational-GO}
Call $\SU(G,\SX)$ the \emph{rational (respectively co-rational) Galois order} associated to $G$ and $\SX$.
\end{Definition}

\begin{Example} \label{ex:symmetric-difference-operators-5}
The algebra $U(f)$ defined in Examples
\ref{ex:symmetric-difference-operators-1},
\ref{ex:symmetric-difference-operators-2},
\ref{ex:symmetric-difference-operators-3},
\ref{ex:symmetric-difference-operators-4},
can be viewed as the rational Galois order $\SU\big(S_n,\{X_f\}\big)$ with $V=\bigoplus_{i=1}^n \C\mu_i$ and coordinate functions $x_i(\mu_j)=\delta_{ij}$.
\end{Example}

\begin{Remark}
Any co-rational Galois order is isomorphic to a rational Galois order via the the restriction of the automorphism $L\ast V\to L\ast V$, $t_v\mapsto t_{-v}$, $f\mapsto f^-$ where $f^-(v)=f(-v)$.
\end{Remark}

\subsection{Finite W-algebras of type $A$} \label{sec:finite-W}
Let $\K$ be an algebraically closed field of characteristic zero.
Let $n$ be a positive integer, $\pi=(p_1,p_2,\ldots,p_n)\in\Z^n$ where $1\le p_1\le p_2\le \cdots\le p_n$, and let $W(\pi)$ be the corresponding finite W-algebra of type $A$, see \cite{FutMolOvs2010}. There is a generating set for $W(\pi)$ consisting of the coefficients of certain polynomials $A_i(u),B^\pm_k(u)\in W(\pi)[u]$, $i=1,2,\ldots,n;\, k=1,2,\ldots,n-1$. We recall the realization of $W(\pi)$ as a Galois order from \cite{FutMolOvs2010} and show that it is a special case of a rational Galois order, as defined above. 

Let $\La=\K[x_{ri}^k\mid 1\le i\le r\le n;\, 1\le k\le p_i]$ be a polynomial algebra in $N=np_1+(n-1)p_2+\cdots+p_n$ variables, $G=S_{p_1}\times S_{p_1+p_2}\times\cdots\times S_{p_1+p_2+\cdots+p_n}$ acting on $\La$ by letting the $r$:th component permute all the $p_1+p_2+\cdots+p_r$ variables $\{x_{ri}^k\mid 1\le i\le k;\, 1\le k\le p_i\}$. Let $\SM\cong\Z^{N-(p_1+p_2+\cdots+p_n)}$ be the free abelian group, written multiplicatively, with basis $\{\delta_{ri}^k\mid 1\le i\le r\le n-1;\, 1\le k\le p_i\}$ acting faithfully on $\La$ by 
\[
\delta_{ri}^k(x_{sj}^l)=
\begin{cases}
x_{sj}^l - 1 & \text{if $(r,i,k)=(s,j,l)$,}\\
x_{sj}^l & \text{otherwise.}
\end{cases}
\]
Properties \eqref{it:G1}--\eqref{it:G5} are easily verified, and we form the corresponding algebra $\SK=(L\ast \SM)^G$ where $L=\Frac\La$.
By \cite[Lem.~3.5]{FutMolOvs2010}, there exists an injective $\K$-algebra homomorphism
\begin{equation}
 i:W(\pi)\to \SK
\end{equation}
given by the following equalities in $\SK[u]$:
\begin{align}
i\big(A_j(u)\big) &= A_j(u)1 \\
i\big(B^\pm_r(u)\big) &= \sum_{(l,j)} 
(\delta_{rj}^l)^{\pm 1} \cdot 
X_{rlj}^\pm(u)
\end{align}
where
\begin{equation}
X_{rlj}^\pm(u) = \mp \frac{\prod_{(k,i)\neq (l,j)} (u+x_{ri}^k)\prod_{m,q}(x_{r\pm 1,q}^m - x_{rj}^l)}{\prod_{(k,i)\neq (l,j)} (x_{ri}^k-x_{rj}^l)}.
\end{equation}
Moreover $i$ maps the Gelfand-Zeitlin subalgebra of $W(\pi)$ to $\Ga=\La^G$.

\begin{proof}[Proof of Theorem \ref{thm:intro-finite-W}]
(i): Let $V=\K^N$ and identify $\La$ with $S(V^{\ast})$ in the obvious way. $G$ can be identified with the reflection group in $\GL(V)$ permuting coordinates.  
The linear forms $x_{ri}^k-x_{rj}^s$ vanish on the reflecting hyperplane corresponding to the transposition in $S_{p_1+p_2+\cdots+p_r}$ interchanging those variables.
 Thus the Jacobian $d_{\sgn}$ corresponding to the sign ($=$determinant) character $\sgn:G\to\{\pm 1\}$ is the product of the $n$ Vandermonde determinants $\CV_r$ in the variables $x_{ri}^k$. 
The key observation is that $X_{rlj}^\pm(u)\cdot d_{\sgn}$ is a polynomial. Let $\SX$ be the set of coefficients of powers of $u$ in $i\big(B_k^+(u)\big)$ and $i\big(B_j^-(u)\big)$ for $k=1,2,\ldots,n-1$ and $j\in J$. Then Theorem \ref{thm:rational-Galois-order} implies that $W_J(\pi)$ is isomorphic to the rational Galois order $\SU(G,\SX)$.

\smallskip\noindent
(ii): Follows from part (i), Theorem \ref{thm:rational-Galois-order} and Theorem \ref{thm:principal-nonempty-fiber}.
\end{proof}

\subsection{Level $p$ Yangians $Y_p(\Fgl_n)$ and enveloping algebra $U(\Fgl_n)$}
These are special cases of finite W-algebras, $Y_p(\Fgl_n)\cong W(\pi)$ where $\pi=(p,p,\ldots,p)$, see \cite{BruKle2006,FutMolOvs2010,RagSor1999}. Specializing further, the enveloping algebra $U(\Fgl_n)$ is isomorphic to $Y_1(\Fgl_n)$. Hence $Y_p(\Fgl_n)$ and $U(\Fgl_n)$, along with their respective parabolic subalgebras, are also examples of rational Galois orders.

\subsection{Orthogonal Gelfand-Zeitlin algebras of type $A$}
\label{sec:OGZ}
Let $\K$ be algebraically closed of characteristic zero.
Orthogonal Gelfand-Zeitlin (OGZ) algebras $U(\boldsymbol{r})$ of type $A$ were defined by Mazorchuk in \cite{Mazorchuk1999}. Quantum analogs were defined in \cite{Hartwig2017}. They can be understood directly as examples of Galois rings as follows. Let $\boldsymbol{r}=(r_1,r_2,\ldots,r_n)$ be a $n$-tuple of positive integers. Let $\La=\K[x_{ki}\mid 1\le k\le n;\, 1\le i\le r_k]$ be a polynomial algebra in $|\boldsymbol{r}|=r_1+r_2+\cdots+r_n$ variables. Let $G=S_{r_1}\times S_{r_2}\times\cdots \times S_{r_n}$ acting by $\si(x_{ki})=x_{k\si_k(i)}$ for $\si=(\si_1,\si_2,\ldots,\si_n)\in G$ and put $\Ga=\La^G$, $L=\Frac \La$. Let $\SM\cong\Z^{|\boldsymbol{r}|-r_n}$ be the free abelian group with generators $\delta^{ki}$ for $1\le k\le n-1$ and $1\le i\le r_k$ acting on $\La$ by $\delta^{ki}(x_{lj})=x_{lj} - \delta_{kl}\delta_{ij}$. 
Properties \eqref{it:G1}--\eqref{it:G5} are satisfied. Put $\SK=(L\ast\SM)^G$. Consider
\begin{equation}
X_k^\pm = \sum_{i=1}^{r_k} (\delta^{ki})^{\pm 1}\cdot A_{ki}^\pm,\qquad A_{ki}^\pm = \mp 
\frac{\prod_j (x_{k\pm 1,j}-x_{ki})}{\prod_{j\neq i}(x_{kj}-x_{ki})}
\end{equation}
As in \cite{Hartwig2017}, we may identify the OGZ algebra $U(\boldsymbol{r})$ defined in \cite{Mazorchuk1999} with the subring of $\SK$ generated by $\Ga$ and $X_k^\pm$ for $k=1,2,\ldots,n-1$. For any subset $J\subset\{1,2,\ldots,n-1\}$ we define $U_J(\boldsymbol{r})$ to be the corresponding \emph{parabolic subalgebra} generated by $\Ga\cup\{X_k^+\mid 1\le k\le n-1\}\cup\{X_j^-\mid j\in J\}$.

\begin{Theorem}
Let $J\subset\{1,2,\ldots,n-1\}$.
\begin{enumerate}[{\rm (i)}]
\item The parabolic OGZ algebra $U_J(\boldsymbol{r})$ is a Galois $\Ga$-order. 
\item For any character $\xi\in\hat\Ga$ there exists a canonical simple left Gelfand-Zeitlin module $M$ over $U_J(\boldsymbol{r})$ with $M_\xi\neq 0$.
\item If $J=\{1,2,\ldots,n-1\}$ then for any $\xi\in\hat{\Ga}$ there are only finitely many isomorphism classes of simple Gelfand-Zeitlin $U(\boldsymbol{r})$-modules $M$ with $M_\xi\neq 0$.
\end{enumerate}
\end{Theorem}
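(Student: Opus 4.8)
The plan is to realize $U_J(\boldsymbol{r})$ as a rational Galois order in the sense of Definition \ref{def:rational-GO} and then invoke the machinery already built up. First I would set $V=\K^{|\boldsymbol{r}|}$ with coordinate functions $x_{ki}$, identify $\La=S(V^\ast)$, and recognize $G=S_{r_1}\times\cdots\times S_{r_n}$ as the reflection group in $\GL(V)$ permuting the coordinates within each block. The generators $\delta^{ki}$ of $\SM$ are then translations $t_v$ for appropriate $v\in V$, so that $U_J(\boldsymbol{r})$ is a subring of $L\ast V$. Properties \eqref{it:G1}--\eqref{it:G5} are routine: \eqref{it:G1} holds because $\SM\cong\Z^{|\boldsymbol{r}|-r_n}$ is free abelian and acts by integer translations while $G$ permutes coordinates, \eqref{it:G2} because conjugating a coordinate translation by a coordinate permutation gives another coordinate translation, \eqref{it:G3}--\eqref{it:G5} because $\La$ is a polynomial ring, $G$ finite, and everything is $\K$-linear over the algebraically closed field $\K$ of characteristic zero.

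\smallskip\noindent
Next I would verify the two hypotheses of Theorem \ref{thm:rational-Galois-order}. For \eqref{it:RGOi}, one checks directly that $\si(X_k^\pm)=X_k^\pm$ for all $\si\in G$: this is the same computation as in \cite{Hartwig2017} showing $X_k^\pm\in\SK$, namely that permuting the $x_{ki}$ among themselves permutes the summands of $X_k^\pm$. For \eqref{it:RGOii}, the key observation is that the denominator of $A_{ki}^\pm$, namely $\prod_{j\neq i}(x_{kj}-x_{ki})$, divides the Vandermonde determinant $\CV_k=\prod_{i<j}(x_{ki}-x_{kj})$ in the $k$-th block of variables. Hence if $d_{\sgn}=\prod_{k=1}^n\CV_k$ is the Jacobian attached to the sign character $\sgn:G\to\{\pm1\}$, then $d_{\sgn}\cdot X_k^\pm\in S\ast V$ for every $k$. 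Taking $\SX=\{X_k^+\mid 1\le k\le n-1\}\cup\{X_j^-\mid j\in J\}$, Theorem \ref{thm:rational-Galois-order} gives that $U_J(\boldsymbol{r})$ is (isomorphic to) the rational Galois $\Ga$-order $\SU(G,\SX)$ in $\SK=(L\ast\SM')^G$, where $\SM'$ is the submonoid generated by $\cup_{X\in\SX}\Supp_{\SM'}(X)$; this is all of $\SM$ when $J=\{1,\ldots,n-1\}$ and a proper submonoid otherwise. In particular $U_J(\boldsymbol{r})$ is a Galois $\Ga$-order, proving (i); and (ii) follows immediately since a rational Galois order is principal, so Theorem \ref{thm:principal-nonempty-fiber}(i) (after applying the $\dagger$ anti-isomorphism, or equivalently its co-principal counterpart, to pass to left modules) yields the canonical simple left Gelfand-Zeitlin module $M$ with $M_\xi\neq 0$.

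\smallskip\noindent
For part (iii), with $J=\{1,2,\ldots,n-1\}$ the monoid $\SM'$ equals the full group $\SM$, so $U(\boldsymbol{r})$ is a Galois $\Ga$-order in the sense of \cite{FutOvs2014} with $\SM$ a group. I would then quote the finiteness-of-fibers theorem of \cite{FutOvs2014}: for a Galois order over an integrally closed Harish-Chandra subalgebra (which $\Ga$ is, by Proposition \ref{prp:HC} together with Lemma \ref{lem:HC-subrings}), every Gelfand-Zeitlin fiber is finite, i.e. for each $\xi\in\hat\Ga$ there are only finitely many isomorphism classes of simple Gelfand-Zeitlin $U(\boldsymbol{r})$-modules $M$ with $M_\xi\neq 0$. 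The main obstacle is really just bookkeeping: making sure the translation action of $\SM$ matches the generators $\delta^{ki}$ correctly and that the denominator-divides-Vandermonde claim is stated for the right block of variables; once \eqref{it:RGOi}--\eqref{it:RGOii} are in place the rest is a direct appeal to Theorems \ref{thm:rational-Galois-order}, \ref{thm:principal-nonempty-fiber} and the main result of \cite{FutOvs2014}.
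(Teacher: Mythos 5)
Your overall strategy is the same as the paper's (realize $U_J(\boldsymbol{r})$ through Theorem \ref{thm:rational-Galois-order}, then apply Theorem \ref{thm:principal-nonempty-fiber}, and quote \cite{FutOvs2014} for part (iii)), but your key denominator-clearing claim is on the wrong side, and this propagates into part (ii). In the definition $X_k^\pm=\sum_i(\delta^{ki})^{\pm1}\cdot A_{ki}^\pm$ the rational coefficients stand to the \emph{right} of the group elements, so in standard form $X_k^\pm=\sum_i\bigl((\delta^{ki})^{\pm1}(A_{ki}^\pm)\bigr)(\delta^{ki})^{\pm1}$, whose coefficients have the \emph{shifted} denominators $\prod_{j\neq i}(x_{kj}-x_{ki}\pm1)$; these do not divide $d_{\sgn}$, so $d_{\sgn}\cdot X_k^\pm\notin\La\ast V$ in general. (Concretely, for $\boldsymbol{r}=(2,1)$ the coefficient of $\delta^{11}$ in $d_{\sgn}X_1^+$ is $-(x_{11}-x_{12})(x_{21}-x_{11}+1)/(x_{12}-x_{11}+1)$, not a polynomial.) What your divisibility observation actually proves, and what the paper uses, is $X_k^\pm\cdot d_{\sgn}\in\La\ast V$. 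The side matters: it decides which branch of Theorem \ref{thm:rational-Galois-order} applies, hence whether $U_J(\boldsymbol{r})$ is exhibited as a principal or a co-principal Galois $\Ga$-order, and hence whether Theorem \ref{thm:principal-nonempty-fiber} yields canonical \emph{right} or \emph{left} Gelfand--Zeitlin modules.

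As written, you land (incorrectly) in the principal case, which gives right modules, and your patch ``apply $\dagger$ to pass to left modules'' is precisely what Remark \ref{rem:dagger} warns against: $\dagger$ sends a principal Galois order in $\SK$ to a Galois order in $\SK^\dagger$, and $(\SK_\Ga)^\dagger\neq(\SK^\dagger)_\Ga$ in general, so principality does not give co-principality (nor left modules) for the same embedding. The repair is short and is the paper's route: verify $X_k^\pm\cdot d_{\sgn}\in\La\ast V$, use the ``respectively'' branch of Theorem \ref{thm:rational-Galois-order} (resting on Theorem \ref{thm:Terao} and Lemma \ref{lem:co-principal-test}) to conclude that $U_J(\boldsymbol{r})$ is a co-principal (co-rational) Galois $\Ga$-order; part (i) still follows, since co-principal Galois orders are Galois orders (and are isomorphic to rational ones by the remark after Definition \ref{def:rational-GO}), and part (ii) then follows directly from Theorem \ref{thm:principal-nonempty-fiber}(ii) with no $\dagger$ manipulation. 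Your setup of $V$, $G$, $\SM$, the verification of \eqref{it:G1}--\eqref{it:G5} and of $G$-invariance of the $X_k^\pm$, and your treatment of (iii) via the main result of \cite{FutOvs2014} once $\SM$ is a group, all match the paper and are fine.
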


\begin{proof}
(i): We show that $U_J(\boldsymbol{r})$ is isomorphic to a rational Galois order.
Let $V=\K^N$ and regard $G\subset\GL(V)$ by permutation of coordinates. The relative Jacobian $d_{\sgn}$ associated to the sign character is a product of Vandermonde determinants: $d_{\sgn}=\prod_{k=1}^n\prod_{1\le i<j\le r_k} (x_{ki}-x_{kj})$. Since $X_k^\pm d_{\sgn}  \in \La\ast V$ and $X_k^\pm$ is $G$-invariant for all $k$, the result follows from Theorem \ref{thm:rational-Galois-order}. Note that here $\SM$ is a group iff $J=\{1,2,\ldots,n-1\}$.

\smallskip\noindent
(ii): Immediate by Theorem \ref{thm:rational-Galois-order} and Theorem \ref{thm:principal-nonempty-fiber}. 

\smallskip\noindent
(iii): By part (i), $U(\boldsymbol{r})$ is a Galois $\Ga$-order in $\SK=(L\ast\SM)^G$ where $\SM$ is a group. Thus the claim follows from the main result of \cite{FutOvs2014}.
\end{proof}

\section{Quantum OGZ algebras} \label{sec:qOGZ}

We apply the results from Sections \ref{sec:Galois-orders} and \ref{sec:GZ-modules} to the \emph{quantum OGZ algebras}, introduced in \cite{Hartwig2017}. These are $q$-deformations of the OGZ algebras $U(\boldsymbol{r})$ and include $U_q(\Fgl_n)$ and extended quantized Heisenberg algebras as special cases. To avoid unnecessary clutter, we only treat the standard form rather than the general $(m,p)$-form from \cite{Hartwig2017}.

Let $\K$ be an algebraically closed field of characteristic zero. Let $\boldsymbol{r}=(r_1,r_2,\ldots,r_n)$ be an $n$-tuple of positive integers. Let $q\in\K$ be nonzero and not a root of unity.
Let 
\begin{equation} \label{eq:qOGZ-La}
\Lambda=\K\big[x_{ki}^{\pm 1}\mid 1\le k\le n,\, 1\le i\le r_k\big] 
\end{equation}
be a Laurent polynomial algebra in $|\boldsymbol{r}|=r_1+r_2+\cdots+r_n$ variables, and $L=\Frac\La$.
Let $G$ be the product of complex reflection groups $G(2,2,r_k)$:
\begin{equation} \label{eq:G(m,p,n)-Definition}
G=G_{\boldsymbol{r}} = G_{r_1}\times G_{r_2}\times\cdots \times G_{r_n},\qquad
G_{r_k} = G(2,2,r_k) = S_n\ltimes A(2,2,n),
\end{equation}
\begin{equation}
A(2,2,n)=\big\{\al=(\al_1,\al_2,\ldots,\al_n)\in \{1,-1\}^n\mid \al_1\al_2\cdots \al_n=1\big\}.
\end{equation}
Thus $G_{r_k}$ is isomorphic to the Weyl group of type $D_{r_k}$.
Then $G$ acts naturally on $\Lambda$, hence on $L$, by
\begin{equation}
g(x_{ki})=\al_{ki}x_{k\si_k(i)}
\end{equation}
for 
\begin{equation}\label{eq:g-element}
g=(\si_1\al_1,\si_2\al_2,\ldots,\si_n\al_n)\in G, 
\quad \si_k\in S_{r_k},
\quad \al_k=(\al_{k1},\al_{k2},\ldots,\al_{kr_k})\in A(2,2,r_k).
\end{equation}
Let $\Ga=\La^G$.
Let $\SM\cong\Z^{|\boldsymbol{r}|-r_n}$, written multiplicatively, with $\Z$-basis $\big\{\delta^{ki}\mid 1\le k\le n-1,\, 1\le i\le r_k\big\}$, acting on $\La$ by $\delta^{ki}(x_{lj})=q^{-\delta_{kl}\delta_{ij}}x_{lj}$.
For simplicity we use the Galois ring realization obtained in \cite{Hartwig2017} as the definition of these algebras. 

\begin{Definition} \label{def:OGZ}
The \emph{quantum OGZ algebra of signature $\boldsymbol{r}$}, denoted $U_q(\boldsymbol{r})$, is the subring of $\SK$ generated by $\Ga$ and $X_k^\pm$ for $k\in\iv{1}{n-1}$, where
\begin{equation}
X_k^{\pm} = \sum_{i=1}^{r_k} (\delta^{ki})^{\pm 1}\cdot A_{ki}^{\pm} \in L\ast \SM,
\end{equation}
where, with the convention $r_0=0$,
\begin{equation}\label{eq:A_ki-Definition}
A_{ki}^{\pm}=\mp x_{ki}^{-(r_{k\pm 1}-r_k)}\cdot\frac{\displaystyle\prod_{j=1}^{r_{k\pm 1}} \big( (x_{k\pm 1,j}/x_{ki})-(x_{k\pm 1,j}/x_{ki})^{-1} \big) /(q-q^{-1})}{\displaystyle\prod_{\substack{1\le j\le r_k\\ j\neq i}}
 \big( (x_{kj}/x_{ki}) - (x_{kj}/x_{ki})^{-1}\big)/(q-q^{-1})}.
\end{equation}
For $J\subset\{1,2,\ldots,n-1\}$ define the \emph{parabolic quantum OGZ algebra} $U_q(\boldsymbol{r};J)$ to be the subalgebra of $U_q(\boldsymbol{r})$ generated by $\Ga\cup \{X_k^+\}_{k=1}^n\cup\{X_j^-\}_{j\in J}$.
\end{Definition}

\begin{Example}
$U_q(1,2,\ldots,n)\cong U_q(\mathfrak{gl}_n)$. For details, see \cite{FutHar2014,Hartwig2017}.
\end{Example}

\begin{Example}
$U_q(1,1)$ is isomorphic to $\mathcal{H}_q[t,t^{-1}]$ where $\mathcal{H}_q$ is the \emph{quantized Heisenberg algebra} with generators $X,Y,K^{\pm 1}$ and relations
\begin{gather}
KK^{-1}=K^{-1}K=1,\\
YX=\frac{K-K^{-1}}{q-q^{-1}},\qquad 
XY=\frac{qK-q^{-1}K^{-1}}{q-q^{-1}},\\
KXK^{-1}=qX, \qquad 
KYK^{-1}=q^{-1}Y.
\end{gather}
\end{Example}

\begin{Definition}
The \emph{$q$-Vandermonde of signature $\boldsymbol{r}$} is the element of $\La$ given by
\begin{equation}
\mathcal{V}_q(\boldsymbol{r})=\prod_{k=1}^n \;
\prod_{1\le i<j\le r_k} \frac{(x_{ki}/x_{kj})-(x_{ki}/x_{kj})^{-1}}{q-q^{-1}}.
\end{equation}
\end{Definition}

\begin{Lemma}\label{lem:q-Vandermonde}
Let $\sgn:G\to\{\pm 1\}$ be the linear character given by
\[
\sgn:(\si_1\al_1,\si_2\al_2,\ldots,\si_n\al_n)\mapsto (\sgn \si_1)(\sgn \si_2)\cdots(\sgn \si_n).
\]
Then
\begin{equation}
\Lambda^G_{\sgn} = \Ga \cdot \mathcal{V}_q(\boldsymbol{r}).
\end{equation}
\end{Lemma}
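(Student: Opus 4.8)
The statement $\Lambda^G_{\sgn} = \Gamma\cdot\mathcal V_q(\boldsymbol r)$ will follow from Theorem~\ref{thm:Terao} once I identify $\mathcal V_q(\boldsymbol r)$, up to a unit in $\Gamma$, with the relative invariant $d_{\sgn}$ attached to the reflection group $G$ acting on a suitable ambient space. The plan is first to pass to a polynomial realization: write each $x_{ki}=e^{y_{ki}}$ formally, or more precisely work inside the group algebra picture where $G=\prod_k G(2,2,r_k)$ acts on the lattice spanned by the $x_{ki}^{\pm1}$; the reflecting hyperplanes of $G(2,2,r_k)\cong W(D_{r_k})$ are, in multiplicative coordinates, the loci $x_{ki}=x_{kj}$ and $x_{ki}=x_{kj}^{-1}$ for $i\neq j$. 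The reflections across the ``type $A$'' hyperplanes $x_{ki}=x_{kj}$ have determinant $-1$ and are the ones on which $\sgn$ is nontrivial; the reflections across $x_{ki}=x_{kj}^{-1}$ also have determinant $-1$ but $\sgn$ is defined to be $+1$ on the subgroup $A(2,2,r_k)$, so in Terao's formula the exponent $a_H$ is $1$ for the first family of hyperplanes and $0$ for the second.

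Thus $d_{\sgn}$ is, up to scalar, the product over $k$ and over $i<j$ of a linear form vanishing on $x_{ki}=x_{kj}$; in the multiplicative/Laurent setting the natural $G$-quasi-invariant choice is $x_{ki}-x_{kj}$, or after clearing by the monomial unit, the balanced expression $(x_{ki}/x_{kj})-(x_{ki}/x_{kj})^{-1}$, which is exactly $\mathcal V_q(\boldsymbol r)$ up to the factors $(q-q^{-1})^{-1}$, which are units in $\K$. So the first key step is to verify directly that $\mathcal V_q(\boldsymbol r)$ is a $\sgn$-relative invariant: under $g$ as in \eqref{eq:g-element}, each factor $(x_{ki}/x_{kj})-(x_{ki}/x_{kj})^{-1}$ is sent to $(\al_{ki}\al_{kj})\big((x_{k\si_k(i)}/x_{k\si_k(j)})^{\pm1}-(\cdots)^{\mp1}\big)$, and since $\al_{ki}\al_{kj}$ and the sign-reversal under $x\mapsto x^{-1}$ each contribute a factor $-1$ that either cancels or combines into $\prod_k\sgn\si_k$; a short bookkeeping shows $g(\mathcal V_q(\boldsymbol r))=\sgn(g)\,\mathcal V_q(\boldsymbol r)$. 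Hence $\Gamma\cdot\mathcal V_q(\boldsymbol r)\subseteq\Lambda^G_{\sgn}$.

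For the reverse inclusion, I would apply Theorem~\ref{thm:Terao} to $S=\K[V^\ast]$ and its localization, or more directly note that $\Lambda$ is the localization of the polynomial ring $\K[x_{ki}]$ at the $x_{ki}$, so relative invariants of $\Lambda$ are relative invariants of the polynomial ring divided by a $G$-invariant monomial; Terao's theorem gives $S^G_{\sgn}=R\,d_{\sgn}$ with $d_{\sgn}=\prod_k\prod_{i<j}(x_{ki}-x_{kj})$ (the type-$A$ Vandermondes, since the $D$-type hyperplanes contribute exponent $0$ under $\sgn$), and after localizing and renormalizing by units one gets $\Lambda^G_{\sgn}=\Gamma\,\mathcal V_q(\boldsymbol r)$. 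The main obstacle will be carefully justifying the translation between Terao's polynomial setup and the Laurent-polynomial/complex-reflection-group setup here: one must check that $G(2,2,r_k)$ really acts as a reflection group on the span of the $\log x_{ki}$ (equivalently, that the relevant exponents $a_H$ in the definition of $d_\chi$ are $1$ and $0$ exactly as claimed, using $\chi(s_H)=\det(s_H^\ast)^{a_H}$), and that localizing at the coordinate monomials does not enlarge the module of relative invariants beyond the predicted $\Gamma\,\mathcal V_q(\boldsymbol r)$. Once that dictionary is in place, the identification $d_{\sgn}=(\text{unit})\cdot\mathcal V_q(\boldsymbol r)$ is immediate and the lemma follows.
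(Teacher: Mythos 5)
There is a genuine error in your computation of $d_{\sgn}$, and it sits at the heart of your argument. First, a structural point: the sign changes in $A(2,2,r_k)$ act on $\La$ by $x_{ki}\mapsto -x_{ki}$ (multiplication by $-1$), not by inversion $x_{ki}\mapsto x_{ki}^{-1}$, so no exponential substitution is needed — the action of $G$ is already linear on the span of the $x_{ki}$, the reflecting hyperplanes are $x_{ki}-x_{kj}=0$ and $x_{ki}+x_{kj}=0$, and $\La$ is just the localization of $S=\K[x_{ki}]$ at the $G$-invariant monomial $\prod_{k,i}x_{ki}$. The fatal step is your claim that $\sgn$ is trivial on the second family of reflections, so that $a_H=0$ there and $d_{\sgn}=\prod_k\prod_{i<j}(x_{ki}-x_{kj})$. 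The reflection fixing $x_{ki}+x_{kj}=0$ sends $x_{ki}\mapsto -x_{kj}$, $x_{kj}\mapsto -x_{ki}$; its permutation part is the transposition $(ij)$ (it is not an element of $A(2,2,r_k)$), so $\sgn$ takes the value $-1$ on it, just as on $x_{ki}\leftrightarrow x_{kj}$. In fact $\sgn$ coincides with $\det$ on $G$, so $a_H=1$ for \emph{every} hyperplane and Terao's theorem gives $d_{\sgn}=\prod_k\prod_{i<j}(x_{ki}-x_{kj})(x_{ki}+x_{kj})=\prod_k\prod_{i<j}(x_{ki}^2-x_{kj}^2)$, the type $D$ Jacobian. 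Your candidate, the type $A$ Vandermonde, is not even a $\sgn$-relative invariant (it is not semi-invariant under $A(2,2,r_k)$), and your identification of $x_{ki}-x_{kj}$ with the balanced factor up to a monomial unit is false: $(x_{ki}/x_{kj})-(x_{kj}/x_{ki})=(x_{ki}^2-x_{kj}^2)/(x_{ki}x_{kj})$ differs from $x_{ki}-x_{kj}$ by the non-unit $x_{ki}+x_{kj}$. As written, your argument would ``prove'' $\La^G_{\sgn}=\Ga\cdot\prod(x_{ki}-x_{kj})$, which contradicts the lemma.

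The route is repairable: with the corrected $d_{\sgn}=\prod_k\prod_{i<j}(x_{ki}^2-x_{kj}^2)$ one does get $\mathcal V_q(\boldsymbol r)=u\cdot d_{\sgn}$ for the $G$-invariant unit $u=\prod_k\big((q-q^{-1})^{-\binom{r_k}{2}}\prod_i x_{ki}^{-(r_k-1)}\big)\in\Ga^\times$, and your localization step goes through (any $f\in\La^G_{\sgn}$ becomes an element of $S^G_{\sgn}=Rd_{\sgn}$ after multiplying by a power of the invariant monomial $\prod_{k,i}x_{ki}$). Note that the paper does not argue via Terao here at all: it proves the inclusion $\La^G_{\sgn}\subset\Ga\,\mathcal V_q(\boldsymbol r)$ directly, using $A_{\boldsymbol r}$-invariance of $f$ to decompose it into monomial-twisted pieces that are $S_{\boldsymbol r}$-alternating Laurent polynomials in the squared variables $x_{ki}^2$, and hence divisible by $\prod_k\prod_{i<j}(x_{ki}^2-x_{kj}^2)$.
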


\begin{proof}
Let $f\in\La^G_{\sgn}$. Since
\begin{equation}
\frac{(x_{ki}/x_{kj})-(x_{ki}/x_{kj})^{-1}}{q-q^{-1}}=
\frac{(x_{ki}x_{kj})^{-1}}{q-q^{-1}}(x_{ki}^2-x_{kj}^2)
\end{equation}
it suffices to show that $f$ is divisible by
\begin{equation}\label{eq:m-vandermonde}
\prod_{k=1}^n \prod_{1\le i<j\le r_k} (x_{ki}^2-x_{kj}^2).
\end{equation}
Since $f$ is $G$-alternating, it is fixed by $A_{\boldsymbol{r}} = A(2,2,r_1)\times A(2,2,r_2)\times\cdots \times A(2,2,r_n)$. Thus
\begin{equation}
f=\sum_{k=(k_1,k_2,\ldots,k_n)\in\{0,1\}^n} f_k\cdot (x_{11}x_{12}\cdots x_{1r_1})^{k_1}
(x_{21}x_{22}\cdots x_{2r_2})^{k_2}\cdots 
(x_{n1}x_{n2}\cdots x_{nr_n})^{k_n},
\end{equation}
where $f_k$ are $S_{\boldsymbol{r}}$-alternating Laurent polynomials in the variables $\{x_{ki}^2\mid 1\le k\le n,\, 1\le i\le r_k\}$, hence divisible by \eqref{eq:m-vandermonde}.
\end{proof}

\begin{Theorem}\label{thm:qOGZ}
Let $J\subset\{1,2,\ldots,n-1\}$ and $U=U_q(\boldsymbol{r};J)$ be a parabolic quantum OGZ algebra and $\Ga$ be its Gelfand-Zeitlin subalgebra.
\begin{enumerate}[{\rm (i)}]
\item $U$ is a Galois $\Ga$-order in $\SK$.
\item $\Ga$ is maximal commutative in $U$.
\item For any $\xi\in\hat{\Ga}$ there exists a canonical simple left Gelfand-Zeitlin $U$-module $M$ with $M_\xi\neq 0$.
\item If $J=\{1,2,\ldots,n\}$ then for any $\xi\in\hat{\Ga}$ there are only finitely many isomorphism classes of simple Gelfand-Zeitlin $U$-modules $M$ with $M_\xi\neq 0$.
\end{enumerate}
\end{Theorem}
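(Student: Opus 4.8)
The plan is to realize $U=U_q(\boldsymbol{r};J)$ as a \emph{co-principal} Galois $\Ga$-order, with the $q$-Vandermonde $\mathcal{V}_q(\boldsymbol{r})$ playing the role of the denominator bound $d_\chi$ for the sign character, and then to read off (ii)--(iv) from the general theory. As in Section~\ref{sec:OGZ}, I take $\SM$ to be the submonoid of $\Z^{|\boldsymbol{r}|-r_n}$ generated by $\bigcup_{X\in\SX}\Supp_\SM(X)$, where $\SX=\{X_k^+\mid 1\le k\le n-1\}\cup\{X_j^-\mid j\in J\}$, and $\SK=(L\ast\SM)^G$; thus $\SM$ is a group exactly when $J=\{1,\dots,n-1\}$. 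First I would verify Assumptions \eqref{it:G1}--\eqref{it:G5}: \eqref{it:G1} holds because $q$ is not a root of unity (the only element of $G$ acting diagonally by powers of $q$ is the identity), \eqref{it:G2} because $G$-conjugation permutes the generators $\delta^{ki}$ within each level, and \eqref{it:G3}--\eqref{it:G5} are standard since $\La$ is a finitely generated $\K$-algebra and $G,\SM$ act by $\K$-automorphisms. The $X_k^\pm$ are $G$-invariant (shown in \cite{Hartwig2017}, or by a direct check), so they lie in $\SK$, and since $\bigcup_{X\in\SX}\Supp_\SM(X)$ generates $\SM$ by our choice of $\SM$, Proposition~\ref{prp:Galois-Ring-Equivalent-Condition} gives that $U$ is a Galois $\Ga$-ring in $\SK$.

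It remains to check $(X_k^\pm)^\dagger(\ga)\in\Ga$ for all $\ga\in\Ga$, whereupon Corollary~\ref{cor:anti-criterion} yields (i). The key point --- the criterion alluded to in the introduction --- is that $\mathcal{V}_q(\boldsymbol{r})$ clears the denominators of the $A_{ki}^\pm$: rewriting each $q$-difference as $(x/y)-(x/y)^{-1}=(xy)^{-1}(x^2-y^2)$ and absorbing the invertible monomials, one has, up to units of $\La$,
\[
A_{ki}^{\pm}=(\text{unit})\cdot\frac{\prod_{j=1}^{r_{k\pm1}}(x_{k\pm1,j}^2-x_{ki}^2)}{\prod_{1\le j\le r_k,\,j\neq i}(x_{kj}^2-x_{ki}^2)},\qquad \mathcal{V}_q(\boldsymbol{r})=(\text{unit})\cdot\prod_{l=1}^{n}\prod_{1\le a<b\le r_l}(x_{la}^2-x_{lb}^2),
\]
and the level-$k$ block of $\mathcal{V}_q(\boldsymbol{r})$ already carries, for every $j\neq i$, the factor $x_{kj}^2-x_{ki}^2$ (up to sign), so $\mathcal{V}_q(\boldsymbol{r})\cdot A_{ki}^\pm\in\La$. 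Now, since $\dagger$ reverses products, $(X_k^\pm)^\dagger=\sum_i A_{ki}^\pm\cdot(\delta^{ki})^{\mp1}\in L\ast\SM^{-1}$, whose $L$-coefficients are the \emph{un-shifted} functions $A_{ki}^\pm$; note that the coefficients of $X_k^\pm$ itself are $(\delta^{ki})^{\pm1}(A_{ki}^\pm)$, whose denominators are $\delta$-shifted and \emph{not} divisible by $\mathcal{V}_q(\boldsymbol{r})$ --- this is exactly why the co-principal, and not the principal, side is forced here. Hence $(X_k^\pm)^\dagger(\ga)=\sum_i A_{ki}^\pm\cdot(\delta^{ki})^{\mp1}(\ga)\in\tfrac{1}{\mathcal{V}_q(\boldsymbol{r})}\La$, while $(X_k^\pm)^\dagger$ is $G$-invariant, so $(X_k^\pm)^\dagger(\ga)\in K$ by Lemma~\ref{lem:evaluation}(iii). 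Since $\mathcal{V}_q(\boldsymbol{r})\in\La^G_{\sgn}$ and $\La^G_{\sgn}=\Ga\,\mathcal{V}_q(\boldsymbol{r})$ by Lemma~\ref{lem:q-Vandermonde}, a one-line argument gives $\tfrac{1}{\mathcal{V}_q(\boldsymbol{r})}\La\cap K=\Ga$, so $(X_k^\pm)^\dagger(\ga)\in\Ga$. (Equivalently, $X_k^\pm\in\SK\cap(\La\ast\SM)\tfrac{1}{\mathcal{V}_q(\boldsymbol{r})}$, and one may instead cite Lemma~\ref{lem:co-principal-test}.) Corollary~\ref{cor:anti-criterion} then shows $U$ is a co-principal Galois $\Ga$-order in $\SK$, proving (i).

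The remaining parts are quick. Part (ii) is immediate from Proposition~\ref{prp:GO-Gamma-maximal-commutative}. Part (iii) follows from Theorem~\ref{thm:principal-nonempty-fiber}(ii) --- using \eqref{it:G4}--\eqref{it:G5} --- which realizes the canonical simple left Gelfand-Zeitlin module $M=V'(\xi)$ as the unique simple quotient of the cyclic submodule $U\xi\subseteq\Ga^\ast$, with $M_\xi\neq0$. In case (iv) the monoid $\SM$ is a group, so $U$ is a Galois $\Ga$-order over a group and the finiteness of the set of isomorphism classes of simple Gelfand-Zeitlin modules with $M_\xi\neq0$ follows from the main result of \cite{FutOvs2014}. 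The main obstacle is establishing co-principality: the realization that one must pass through $\dagger$ to the co-principal picture before $\mathcal{V}_q(\boldsymbol{r})$ can clear denominators; once that and the divisibility $\mathcal{V}_q(\boldsymbol{r})\cdot A_{ki}^\pm\in\La$ are in hand, Lemma~\ref{lem:q-Vandermonde} closes the argument.
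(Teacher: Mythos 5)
Your proposal is correct and follows essentially the same route as the paper's proof: you realize $U_q(\boldsymbol{r};J)$ as a co-principal Galois $\Ga$-order in $(L\ast\SM_J)^G$ by checking \eqref{it:G1}--\eqref{it:G5}, applying Proposition~\ref{prp:Galois-Ring-Equivalent-Condition}, and using the $q$-Vandermonde together with Lemma~\ref{lem:q-Vandermonde}, Lemma~\ref{lem:co-principal-test} and Corollary~\ref{cor:anti-criterion} to clear the denominators of $(X_k^\pm)^\dagger$, after which (ii)--(iv) follow from Proposition~\ref{prp:GO-Gamma-maximal-commutative}, Theorem~\ref{thm:principal-nonempty-fiber} and the main result of \cite{FutOvs2014} exactly as in the paper. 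Your write-up merely spells out in more detail the verification that $\mathcal{V}_q(\boldsymbol{r})\cdot A_{ki}^\pm\in\La$ and why one must pass to the $\dagger$ (co-principal) side, which the paper leaves as a brief observation.
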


\begin{proof}
(i): Let $\SM_J$ be the submonoid of $\SM$ generated by $\delta^{ka}$ and $(\delta^{jb})^{-1}$ for $k\in\iv{1}{n-1}$, $a\in\iv{1}{r_k}$, $j\in J$ and $b\in\iv{1}{r_j}$.
We show that $U$ is a co-principal Galois $\Ga$-order in $\SK_J=(L\ast(\SM_J)\big)^G$.
It is easy to verify that conditions \eqref{it:G1}--\eqref{it:G5} hold for $\La$, $G$ and $\SM_J$ as defined in \eqref{eq:qOGZ-La}, \eqref{eq:G(m,p,n)-Definition}.
By Corollary \ref{cor:anti-criterion}, it suffices to check that $(X_k^+)^\dagger, (X_j^-)^\dagger\in \SK_\Ga$ for all $k\in\iv{1}{n-1}$ and $j\in J$.
Observe that multiplying $(X_k^\pm)^\dagger$ by the $q$-Vandermonde from the left clears all denominators. So the claim is immediate by Lemma \ref{lem:q-Vandermonde} and Lemma \ref{lem:co-principal-test}.

\smallskip\noindent
(ii): By part (i) and Proposition \ref{prp:GO-Gamma-maximal-commutative}.

\smallskip\noindent
(iii): By part (i), Remark \ref{rem:dagger} and Theorem \ref{thm:principal-nonempty-fiber}.

\smallskip\noindent
(iv): In this case $\SM_J=\SM$ is which is a group so the statement follows from part (i) and the main result of \cite{FutOvs2014}.
\end{proof}

\end{document}